\newcommand{\N}{\mathbb{N}}						
\newcommand{\Z}{\mathbb{Z}}						
\newcommand{\R}{\mathbb{R}}						
\newcommand{\C}{\mathbb{C}}						
\newcommand{\bS}{\mathbb{S}}					
\newcommand{\bT}{\mathbb{T}}					
\newcommand{\eps}{\varepsilon}					
\newcommand{\dd}								
	{\mathop{}\!\mathrm{d}}						
\DeclareMathOperator{\id}{id}					
\DeclareMathOperator{\vol}{vol}					
\DeclareMathOperator{\tor}{Tor}					
\DeclareMathOperator{\spt}{spt}					
\DeclareMathOperator{\mass}{\mathbf{M}}
\DeclareMathOperator{\im}{im}					
\newcommand{\abs}[1]{\left| #1 \right|}			
\newcommand{\norm}[1]							
	{\lVert #1 \rVert}
\newcommand{\push}[1]{{#1}_*\,}					
\newcommand{\hodge}{\mathtt{\star}\hspace{1pt}}	
\newcommand{\free}{\mathrm{free}}				
\newcommand{\loc}{\mathrm{loc}}
\newcommand{\CE}{\mathrm{CE}}
\newcommand{\cesob}{W^{\text{OCE}}}				
\newcommand{\cesobt}{W^{\text{CE}}}				
\newcommand{\cesobloc}
	{W^{\text{OCE}}_{\text{loc}}}
\newcommand{\cesobtloc}
	{W^{\text{CE}}_{\text{loc}}}
\newcommand{\cehom}[1]{H_{\text{CE}}^{#1}}
\newcommand{\sheaf}{\mathcal{S}}				
\newcommand{\Rsheaf}{\mathcal{R}}				
\newcommand{\cesheaf}[2]
	{\mathcal{S}^{#1}_{#2, \text{CE}}}
\newcommand{\singsheaf}[2]
	{\mathcal{S}^{#1}_{#2, \text{sing}}}
\newcommand{\Fetale}{\mathcal{F}}
\newcommand{\ceetale}[2]
	{\mathcal{F}^{#1}_{#2, \text{CE}}}
\newtheorem{thm}{Theorem}[section]{\bf}{\it}
\newtheorem{lemma}[thm]{Lemma}
\newtheorem{prop}[thm]{Proposition}
\newtheorem{cor}[thm]{Corollary}
\theoremstyle{definition}
\newtheorem{defn}[thm]{Definition}
\theoremstyle{remark}
\newtheorem{rem}[thm]{Remark}
\numberwithin{equation}{section}
\begin{document}

\title[Uniform cohomological expansion]{Uniform cohomological expansion of uniformly quasiregular mappings}
\author{Ilmari Kangasniemi \and Pekka Pankka}
\address{Department of Mathematics and Statistics, P.O. Box 68 (Gustaf H\"allstr\"omin katu 2b), FI-00014 University of Helsinki, Finland}
\email{\{ilmari.kangasniemi,pekka.pankka\}@helsinki.fi}

\begin{abstract}
Let $f\colon M \to M$ be a uniformly quasiregular self-map of a compact, connected, and oriented Riemannian $n$-manifold $M$ without boundary, $n\ge 2$. We show that, for $k \in \{0,\ldots, n\}$, the induced homomorphism $f^* \colon H^k(M;\R) \to H^k(M;\R)$, where $H^k(M;\R)$ is the $k$-th singular cohomology of $M$, is complex diagonalizable and the eigenvalues of $f^*$ have absolute value $(\deg f)^{k/n}$. As an application, we obtain a degree restriction for uniformly quasiregular self-maps of closed manifolds. In the proof of the main theorem, we use a Sobolev--de Rham cohomology based on conformally invariant differential forms and an induced push-forward operator.
\end{abstract}

\thanks{This work was supported in part by the doctoral program DOMAST of the University of Helsinki and the Academy of Finland project \#297258. This is the final draft version accepted for publication. For the published version, see: \emph{Proc.\ London Math.\ Soc.}, 118(3):701--728, 2019. (\url{https://doi.org/10.1112/plms.12205}).}
\subjclass[2010]{Primary 30C65; Secondary 57M12, 30D05}
\keywords{Uniformly quasiregular mappings, degree spectrum, entropy, Sobolev--de~Rham cohomology}
\date{\today}

\maketitle

\section{Introduction}

A continuous self-map $f\colon M \to M$ of an oriented Riemannian $n$-manifold $M$ of dimension $n \geq 2$ is \emph{$K$-quasiregular for $K\ge 1$} if $f$ belongs to the Sobolev space $W^{1,n}_\loc(M,M)$ and satisfies the inequality 
\[
	\abs{Df}^n \leq K J_f \quad \text{a.e. in}\ M,
\]
where $\abs{Df}$ is the operator norm of the differential $Df$ of $f$ and $J_f$ the Jacobian determinant $\det Df$. A quasiregular self-map $f\colon M\to M$ is \emph{uniformly $K$-quasiregular} if all iterates $f^k$ of $f$ for $k\ge 1$ are $K$-quasiregular. By a result of Iwaniec and Martin \cite{Iwaniec-Martin_AASF}, uniformly quasiregular mappings preserve a bounded measurable conformal structure, and hence are also termed \emph{rational quasiregular maps}. We refer to Martin \cite{Martin2014} for an extensive survey.  

In this article, we show that uniformly quasiregular self-maps of degree at least two on a closed manifold are uniformly cohomologically expanding. In what follows, $H^*(M;\R)$ denotes the \emph{singular cohomology of the manifold $M$ with real coefficients}. Recall that a manifold is \emph{closed} if it is compact and without boundary. Our main theorem reads as follows.
	
\begin{thm}\label{alleigenvaluessamediag}
	Let $f \colon M \to M$ a uniformly quasiregular map on a closed, connected, oriented Riemannian $n$-manifold $M$, $n\ge 2$, and let $k \in \{0, \ldots, n\}$. Then the induced map $f^* \colon H^k(M;\R) \to H^k(M;\R)$ is complex diagonalizable and all (complex) eigenvalues of $f^*$ have modulus $(\deg f)^{k/n}$, where $\deg f$ denotes the topological degree of $f$.
\end{thm}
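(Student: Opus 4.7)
The plan is to transfer the cohomological question to the conformally invariant Sobolev--de~Rham cohomology $\cehom{k}(M)$ announced in the abstract, on which a uniformly quasiregular self-map admits a well-controlled pull-back and a compatible push-forward. As preparation I would develop this cohomology so that its $k$-classes are represented by closed $k$-forms measured in the conformally invariant $L^{n/k}$-norm, verify the isomorphism $\cehom{k}(M)\cong H^k(M;\R)$, and equip this finite-dimensional vector space with the quotient norm induced from $L^{n/k}$; by finite-dimensionality this is equivalent to any other linear norm and $f^*$ becomes a bounded linear operator on it.

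The analytic heart of the matter is the quasiregular change of variables: for any $K$-quasiregular self-map $g\colon M\to M$ and any closed $k$-form $\omega\in L^{n/k}$,
\[
	\int_M |g^*\omega|^{n/k} \leq \int_M |Dg|^n|\omega\circ g|^{n/k} \leq K\int_M J_g|\omega\circ g|^{n/k} = K(\deg g)\int_M|\omega|^{n/k},
\]
using $|g^*\omega|\leq |Dg|^k|\omega\circ g|$, the distortion inequality $|Dg|^n\leq KJ_g$, and the multiplicity formula. Uniform $K$-quasiregularity gives the \emph{same} $K$ for every iterate $f^m$, while $\deg(f^m)=(\deg f)^m$, so passage to the quotient norm yields the uniform estimate
\[
	\|(f^*)^m\|_{\mathrm{op}}\leq K^{k/n}(\deg f)^{mk/n}\qquad(m\geq 1).
\]
Applied to an eigenvector $f^*v=\lambda v$ and letting $m\to\infty$, this delivers the upper bound $|\lambda|\leq(\deg f)^{k/n}$.

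The matching lower estimate comes from the topological degree identity $f_*f^*=(\deg f)\,\id$ on $H^*(M;\R)$: every eigenvector $f^*v=\lambda v$ satisfies $f_*v=((\deg f)/\lambda)v$, so eigenvalues of $f^*$ and $f_*$ on $H^k(M;\R)$ are paired by $\lambda\mu=\deg f$. Under Poincaré duality, $f_*$ on $H^k(M;\R)$ is the adjoint of $f^*$ on $H^{n-k}(M;\R)$, so the already-established upper bound applied in degree $n-k$ gives $|\mu|\leq(\deg f)^{(n-k)/n}$ and hence $|\lambda|\geq(\deg f)^{k/n}$. Thus every eigenvalue of $f^*$ on $H^k(M;\R)$ has modulus exactly $(\deg f)^{k/n}$. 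Diagonalizability then follows at once from the uniform operator estimate: a Jordan block of size $p\geq 2$ would force $\|(f^*)^m\|_{\mathrm{op}}\gtrsim m^{p-1}(\deg f)^{mk/n}$, contradicting the bound as $m\to\infty$.

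I expect the genuinely hard part to be the analytic foundation rather than the eigenvalue extraction above. In particular, the technical work should consist of constructing the conformally invariant Sobolev--de~Rham complex on $M$, proving its isomorphism with $H^*(M;\R)$ (via a Poincaré lemma for conformally invariant forms and a sheaf-theoretic comparison), defining a bounded push-forward $f_*$ by a preimage-trace construction despite the branch set of $f$, and verifying both the identity $f_*f^*=(\deg f)\,\id$ on this cohomology and the compatibility of $f_*$ with the topological transfer under Poincaré duality; once this machinery is in place, the dynamical bound on iterates and the degree identity close the argument almost mechanically.
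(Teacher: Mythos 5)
Your proposal is correct and follows the paper's overall architecture (conformal Sobolev--de~Rham cohomology identified with $H^*(M;\R)$, uniform iterate estimates in the $L^{n/k}$-quotient norm, Jordan-block argument for diagonalizability), but it obtains the lower bound $\abs{\lambda}\ge(\deg f)^{k/n}$ by a genuinely different route. The paper proves a two-sided norm estimate $C^{-1}(\deg f)^{k/n}\norm{c}\le\norm{f^*c}\le C(\deg f)^{k/n}\norm{c}$ directly on $\cehom{k}(M)$: the lower half requires constructing the analytic push-forward $\push{f}$ on the chain complex and proving the quantitative bound $\norm{\push{f}\omega}_{n/k}\le C(\deg f)^{(n-k)/n}\norm{\omega}_{n/k}$, so that $\norm{f^*c}\ge C^{-1}(\deg f)^{-(n-k)/n}\norm{\push{f}f^*c}=C^{-1}(\deg f)^{k/n}\norm{c}$. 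You instead pair eigenvalues across complementary degrees: from $f_*f^*=(\deg f)\id$ an eigenvalue $\lambda$ of $f^*$ on $H^k$ yields the eigenvalue $\mu=(\deg f)/\lambda$ of $f_*$ on $H^k$, which by adjointness under the Poincar\'e pairing is an eigenvalue of $f^*$ on $H^{n-k}$, so the upper bound in degree $n-k$ gives $\abs{\lambda}\ge(\deg f)^{k/n}$. This is sound (the adjoint with respect to a nondegenerate pairing has the same characteristic polynomial), and it is analytically lighter for the eigenvalue statement: the identity $f_*f^*=(\deg f)\id$ and the adjointness are purely topological properties of the transfer once the cohomologies are identified, so in principle you do not need the $L^{n/k}$-bound on $\push{f}$ at all. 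What the paper's heavier route buys is the two-sided norm estimate itself (Theorem \ref{thm:estimate}), which is stated as a result of independent interest and does not follow from your argument. Two small points to tidy up: the pointwise bound $\abs{g^*\omega}\le\abs{Dg}^k\abs{\omega\circ g}$ holds for the comass norm and only up to a dimensional constant for the Grassmann norm, so your operator bound should read $C(n,k)K^{k/n}(\deg f)^{mk/n}$ (harmless for the limit); and the eigenvector argument must be run in the complexified space with a complexified norm, as in the paper's Lemma \ref{complexcohomnormestimate}.
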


As an immediate consequence of Theorem \ref{alleigenvaluessamediag} we obtain a cohomological obstruction for the uniformly quasiregular Sto\"ilow theorem of Martin and Peltonen \cite{Martin-Peltonen-PAMS}. To be more precise, recall that, by a theorem of Martin and Peltonen, given a quasiregular self-map $F\colon \bS^n \to \bS^n$ of the $n$-sphere $\bS^n$ for $n\ge 2$, there exists a quasiconformal homeomorphism $h \colon \bS^n \to \bS^n$ for which the composition $f = F\circ h$ is uniformly quasiregular. Having Theorem \ref{alleigenvaluessamediag} at our disposal, it is easy to find quasiregular self-maps of closed manifolds for which the homology gives an obstruction for this factorization property. Consider, for example, for $m\ge 2$ the stretch map $f \colon re^{i \theta} \mapsto r e^{i m \theta}$ on $\bS^1$, a winding map $F \colon (re^{i\theta}, t) \mapsto (re^{im\theta},t)$ on $\bS^2 \subset \C\times \R$, and their product $f\times F \colon \bS^1\times \bS^2\to \bS^1\times \bS^2$. Then $f\times F$ is quasiregular, but there are no self-homeomorphisms $\psi$ and $\varphi$ of $\bS^1\times \bS^2$ for which the composition $\tilde f=\psi \circ (f\times F)\circ \varphi$ is uniformly quasiregular, since the homomorphism $\tilde f^* \colon H^2(\bS^1\times \bS^2) \to H^2(\bS^1\times \bS^2)$ induced by this composition is $c\mapsto mc$ and $m\ne m^{4/3} = (\deg f)^{2/3}$. 

Theorem \ref{alleigenvaluessamediag} is sharp in two ways. First, simple examples show that the second claim on the moduli of the eigenvalues does not hold if $M$ is an open manifold. For example, let $f\colon \C^* \to \C^*$ be the standard power map $z\mapsto z^m$, for $m\ge 2$, on the punctured complex plane $\C^* = \C\setminus \{0\}$. Then $f^* \colon H^1(\C^*;\R) \to H^1(\C^*;\R)$ is the homomorphism $x \mapsto m x$, and $m=\deg f \ne (\deg f)^{1/2}$. We find it interesting that, in this case, the induced homomorphism $f^* \colon H^1_c(\C^*;\R) \to H^1_c(\C^*;\R)$ in the compactly supported cohomology is the identity and, in particular, is independent on the degree. This follows, for example, from Poincar\'e duality. 

Second, we observe that the eigenvalues of $f^*$ need not be real, even after iteration. Consider, for example, the linear mapping $L \colon \R^2 \to \R^2$, $(x, y) \mapsto (3x - 4y, 4x + 3y)$. Then $L$ induces a Latt\`es map $f \colon \bT^2 \to \bT^2$ on the $2$-torus $\bT^2$ and the matrix of $f^* \colon H^1(\bT^2;\R)\to H^1(\bT^2;\R)$ with respect to the standard basis of $H^1(\bT^2;\R)$ is just the matrix of $L$ in the standard basis of $\R^2$. Hence, the eigenvalues $3 \pm 4i$ of $f^*$ are not real. Since $(3+4i)/5$ is not a root of unity, the eigenvalues of $(f^m)^*$ are not real for any $m\in \Z_+$. Indeed, the minimal rational polynomial $P$ of $(3+4i)/5$ is $z \mapsto z^2 - (6/5)z + 1$. Since $P$ has a non-integer coefficient, $P$ is not cyclotomic, and consequently $(3+4i)/5$ is not a root of unity. Thus $(3+4i)^m$ is not real for any $m \in \Z_+$.

\subsection*{Uniformly quasiregular mappings and entropy}

Our interest in the eigenvalues of $f^* \colon H^*(M;\R)\to H^*(M;\R)$ is influenced in part by questions related to the topological entropy of uniformly quasiregular maps; see e.g.\ Gromov \cite{Gromov-2003}. In particular, Shub's entropy conjecture for $C^1$-mappings $f\colon M\to M$, solved by Yomdin in the $C^\infty$-case, states that 
\begin{equation}\label{eq:Shub}
	h(f) \ge \log s(f_*),
\end{equation}
where $h(f)$ is the topological entropy of the mapping $f$ and $s(f_*)$ the spectral radius for the induced homomorphism $f_* \colon H_*(M;\R)\to H_*(M;\R)$ in homology. Recall that the \emph{topological entropy of a continuous mapping $f\colon M\to M$} is 
\[
	h(f) = \lim_{\varepsilon\to 0} \limsup_{k \to \infty} \frac{\log S_\varepsilon(k;f)}{k},
\]
where $S_\varepsilon(k;f)$ is the cardinality of a maximal $(k,\varepsilon;f)$-separated set in $M$; a set $E \subset M $ is \emph{$(k,\varepsilon;f)$-separated} if for $x,y\in E$, $x\ne y$, there exists $j\in \{1,\ldots, k\}$ for which points $f^j(x)$ and $f^j(y)$ have distance at least $\varepsilon$. We refer to Shub \cite{Shub-BAMS-1974} and Yomdin \cite{Yomdin-1987} for more detailed discussions on the terminology and the conjecture.

To our knowledge inequality \eqref{eq:Shub} is not known for uniformly quasiregular mappings. By Theorem \ref{alleigenvaluessamediag}, we have that a uniformly quasiregular mapping $f\colon M \to M$ satisfies
\[
	 s(f^* ) = \deg f ,
\]
which then implies the same for $s(f_*)$ by Poincar\'e duality. Thus, for uniformly quasiregular mappings the inequality \eqref{eq:Shub} is equivalent to the inequality 
\[
	h(f) \geq \log(\deg f).
\]

\subsection*{An application: the degree spectrum}

As an application of Theorem \ref{alleigenvaluessamediag}, we consider the \emph{degree spectrum of uniformly quasiregular mappings on a closed manifold $M$}, that is, the set of all degrees $\deg f$ of uniformly quasiregular self-maps $f\colon M \to M$ of $M$.

It is a simple corollary of the uniformly quasiregular Sto\"ilow theorem of Martin and Peltonen that the $n$-sphere admits uniformly quasiregular mappings of all degrees. In contrast to the case of spheres, in presence of non-trivial cohomology of order $k \in \{1, \ldots, n-1\}$ not all positive integers appear as degrees of uniformly quasiregular mappings. We have the following corollary of Theorem \ref{alleigenvaluessamediag}.

\begin{cor}\label{cor:degree_spectrum}
	Let $f\colon M\to M$ be a uniformly quasiregular self-map of a closed, connected, oriented Riemannian $n$-manifold $M$ and suppose that, for some $1\le k < n$, $k \dim H^k(M;\R)$ is coprime to $n$. Then $(\deg f)^{1/n}$ is an integer.
\end{cor}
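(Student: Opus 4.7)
The plan is to combine Theorem~\ref{alleigenvaluessamediag} with the integrality of the determinant of $f^*$ on the integral lattice, and then use a simple number-theoretic observation based on prime factorizations.

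First, set $d = \dim H^k(M;\R)$ and let $\lambda_1,\ldots,\lambda_d$ be the complex eigenvalues of $f^* \colon H^k(M;\R) \to H^k(M;\R)$, listed with multiplicity. By Theorem~\ref{alleigenvaluessamediag}, we have $\abs{\lambda_j} = (\deg f)^{k/n}$ for every $j$. Since $f^*$ is the complexification of the map $H^k(M;\R) \to H^k(M;\R)$ induced by $f$, and since this map is in turn the extension of scalars of the induced homomorphism $H^k(M;\Z)/\mathrm{tor} \to H^k(M;\Z)/\mathrm{tor}$ on the integral cohomology lattice, the determinant
\[
	\det\bigl(f^* \colon H^k(M;\R) \to H^k(M;\R)\bigr) = \prod_{j=1}^d \lambda_j
\]
is an integer. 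Taking absolute values gives
\[
	\left|\prod_{j=1}^d \lambda_j\right| = \prod_{j=1}^d \abs{\lambda_j} = (\deg f)^{kd/n},
\]
so $N := (\deg f)^{kd/n}$ is a non-negative integer, and in fact a positive integer since $\deg f \ge 1$.

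Writing $D = \deg f$, we obtain the integer equation $D^{kd} = N^n$. It remains to show that the coprimality assumption $\gcd(kd,n)=1$ forces $D$ to be a perfect $n$-th power. Consider the prime factorization $D = \prod_p p^{e_p}$. The identity $D^{kd}=N^n$ implies $n \mid kd\cdot e_p$ for every prime $p$, and the coprimality of $kd$ with $n$ then gives $n \mid e_p$ for every $p$. Consequently $D$ is an $n$-th power, which is equivalent to the assertion that $(\deg f)^{1/n} \in \Z$.

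No step here is delicate once Theorem~\ref{alleigenvaluessamediag} is in hand; the only conceptual point to emphasize is the integrality of the determinant, which depends on working in a basis coming from the free part of $H^k(M;\Z)$ rather than an arbitrary $\R$-basis.
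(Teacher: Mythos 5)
Your argument is correct and follows the same route as the paper: Theorem~\ref{alleigenvaluessamediag} gives $|\det f^*| = (\deg f)^{kd/n}$, integrality of $\det f^*$ comes from an $f^*$-invariant basis arising from $H^k(M;\Z)/\tor$ (the paper's Lemma~\ref{topological_basis_lemma}), and coprimality of $kd$ and $n$ then forces $\deg f$ to be an $n$-th power. Your explicit prime-factorization step merely spells out what the paper leaves implicit.
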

\begin{proof}[Sketch of proof]
	Let $d=\dim H^k(M;\R)$. First, we observe that, by Theorem \ref{alleigenvaluessamediag}, the determinant of $f^* \colon H^k(M;\R)\to H^k(M;\R)$ is $\pm\left( \deg f \right)^{kd/n}$. On the other hand, by embedding $H^k(M;\Z)/\tor(H^k(M;\Z))$ into $H^k(M;\R)$ as an $f^*$-invariant subgroup, we find a basis of $H^k(M;\R)$ for which the matrix of $f^*$ has integer coefficients. Thus $\left(\deg f\right)^{kd/n} = \left|\det f^*\right|$ is an integer. Since $kd$ and $n$ are coprime, we conclude that $(\deg f)^{1/n}$ is an integer. A more detailed proof is given in Section \ref{sect:degree_limitation}.
\end{proof}

In the special case of products of spheres $M=\bS^{k_1}\times \cdots \times \bS^{k_p}$, Corollary \ref{cor:degree_spectrum} yields a sufficient condition for a characterization of the degree spectrum when combined with an existence theorem of Astola, Kangaslampi, and Peltonen \cite{Astola-Kangaslampi-Peltonen}; see also Mayer \cite{Mayer1997paper} for the case of Latt\`{e}s maps on spheres. 

\begin{cor}
	Let $M=\bS^{k_1}\times \cdots \times \bS^{k_p}$ and $n=k_1+ \cdots +k_p$. Suppose there exists $0 < k < n$ for which $k \dim H^{k}(M)$ is coprime to the dimension $n$. Then there exists a uniformly quasiregular mapping $M\to M$ of degree $d$ if and only if $d^{1/n} \in \Z_+$. In addition, every admissible degree is realized by a Latt\`es map. 
\end{cor}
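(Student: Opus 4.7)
The statement combines two implications, which I would handle separately. Necessity of the condition $d^{1/n}\in\Z_+$ is an immediate application of Corollary \ref{cor:degree_spectrum}: the hypothesis provides an integer $k$ with $0 < k < n$ for which $k\dim H^{k}(M;\R)$ is coprime to $n$, and the corollary then forces any admissible degree $d$ to satisfy $d^{1/n}\in\Z_+$.

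For the sufficiency direction, the plan is to realize each positive integer $m$ as the $n$-th root of the degree of a Latt\`es self-map of $M$, by taking a product of Latt\`es similarity maps on the factor spheres. By the existence theorem of Astola, Kangaslampi, and Peltonen \cite{Astola-Kangaslampi-Peltonen} (which in the single-factor case recovers Mayer \cite{Mayer1997paper}), each $\bS^{k_i}$ carries a Latt\`es self-map $f_i\colon \bS^{k_i}\to\bS^{k_i}$ of degree $m^{k_i}$, obtained by descending the Euclidean similarity $x\mapsto mx$ through a suitable crystallographic quotient. Since all the $f_i$ arise from the common scaling factor $m$, the product
\[
    f = f_1\times\cdots\times f_p \colon M\longrightarrow M
\]
is locally modelled by $(x_1,\ldots,x_p)\mapsto (mx_1,\ldots,mx_p)$ on $\R^n$, which is itself a Euclidean similarity. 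Hence $f$ is a Latt\`es (in particular uniformly quasiregular) self-map of $M$ of degree
\[
    \deg f = \prod_{i=1}^p m^{k_i} = m^n = d,
\]
yielding both the existence of a uniformly quasiregular map of every admissible degree and the stronger statement that each admissible degree is realized by a Latt\`es map.

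The principal point that needs care is verifying that the product of Latt\`es similarity maps with a common scale factor remains locally conformal on $M$. This reduces to the observation that a block-diagonal matrix whose diagonal blocks are all equal to $m$ times an orthogonal matrix is itself a scalar multiple of an orthogonal matrix, so the same conformal constant controls every iterate of $f$. Once this is in place, the remainder of the argument is bookkeeping: matching the scaling factor $m$ across factors and invoking the Astola--Kangaslampi--Peltonen construction to ensure that the individual $f_i$ are genuinely uniformly quasiregular on the sphere quotients.
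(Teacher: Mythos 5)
Your necessity argument is exactly the paper's and is fine. The problem is in the sufficiency half, where you take a genuinely different route from the paper: the paper invokes the Astola--Kangaslampi--Peltonen theorem for the \emph{product manifold $M$ itself} (their result directly supplies a uniformly quasiregular Latt\`es map $M\to M$ of degree $\lambda^n$ for each $\lambda\in\Z_+$), whereas you invoke it only factorwise on the spheres $\bS^{k_i}$ and then claim that the product $f_1\times\cdots\times f_p$ is a uniformly quasiregular Latt\`es map. That last claim is the entire difficulty, and your justification does not establish it.

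The observation that $\mathrm{diag}(mO_1,\ldots,mO_p)$ is $m$ times an orthogonal matrix only concerns the linear model $A=A_1\times\cdots\times A_p$ on $\R^n$. The map $f$ on $M$ is merely semiconjugate to $A$ via $h=h_1\times\cdots\times h_p$, where the $h_i\colon\R^{k_i}\to\bS^{k_i}$ are non-conformal branched quasiregular coverings, and a product of quasiregular maps is in general \emph{not} quasiregular: with $Dh=\mathrm{diag}(Dh_1,\ldots,Dh_p)$ one needs $\max_i\norm{Dh_i}^{n}\le K\prod_i J_{h_i}$, which fails wherever one factor is near its branch set (so its Jacobian degenerates) while another factor has derivative of definite size; the individual inequalities $\norm{Dh_i}^{k_i}\le K_iJ_{h_i}$ do not combine to give the $n$-dimensional one. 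The same issue afflicts $f_1\times\cdots\times f_p$ and all of its iterates directly, so ``locally modelled by a similarity'' does not yield uniform quasiregularity without distortion estimates near the branch sets. The paper's own introductory example $f\times F$ on $\bS^1\times\bS^2$ is a warning that products behave badly; matching the scale factors across the factors removes the cohomological obstruction but does not by itself control the distortion. To close the gap, either cite \cite{Astola-Kangaslampi-Peltonen} for the product manifold $M$ as the paper does, or supply the distortion analysis for the product covering map --- which is precisely the nontrivial content of their construction.
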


Here the necessity of the degree condition follows from Corollary \ref{cor:degree_spectrum} as discussed above. The sufficiency follows from the aforementioned result of Astola, Kangaslampi, and Peltonen, which in this particular case states that for each $\lambda\in \Z_+$ there exists a uniformly quasiregular Latt\`es map $f \colon M \to M$ of degree $\lambda^n$. We refer to \cite{Astola-Kangaslampi-Peltonen} for a detailed discussion.

\subsection*{Outline of the proof of Theorem \ref{alleigenvaluessamediag}}
\enlargethispage{\baselineskip}

As the first step, we show that a quasiregular self-map $f\colon M\to M$ of a closed, connected, oriented Riemannian manifold induces a homomorphism on the conformal Sobolev--de Rham cohomology $H^*_{\CE}(M)$ of $M$; here the abbreviation ``CE'' stands for \emph{conformal exponent}. We define $H^*_{\CE}(M)$ as the cohomology of the complex $\cesobtloc(\wedge^* M)$, where for $k=1,\ldots, n-2$, the space $\cesobtloc(\wedge^k M)$ is the local Sobolev space $W^{d,\frac{n}{k}, \frac{n}{k+1}}_\loc(\wedge^k M)$ of conformal exponents, and for $k=0, n-1,n$, we replace the space $W^{d,\frac{n}{k},\frac{n}{k+1}}_\loc(\wedge^k M)$ by either a smaller or larger space of differential forms in order to obtain suitable complex. The precise definition of the complex $\cesobtloc(\wedge^* M)$ is given in Section \ref{sect:confcohom}.

We show that the obtained cohomology $H^*_\CE(M)$ is a sheaf cohomology which agrees with the singular cohomology $H^*(M;\R)$ on oriented Riemannian manifolds. This reduces Theorem \ref{alleigenvaluessamediag} to a corresponding statement on the homomorphism $f^* \colon H^k_\CE(M) \to H^k_\CE(M)$. Similar Sobolev-de Rham complexes and cohomologies have been considered by Donaldson and Sullivan \cite{DonaldsonSullivan1989paper} and Gol'dshtein and Troyanov \cite{GoldsteinTroyanov2010paper}; see also e.g.\ Bonk--Heinonen \cite{Bonk-Heinonen_Acta} for an application of conformally invariant Sobolev spaces.

The reason we consider the cohomology $H^*_{\CE}(M)$ instead of the standard de Rham cohomology $H^*_{\mathrm{dR}}(M)$ is that that a pull-back of a $C^\infty$-smooth form under a quasiregular mapping need not be $C^\infty$-smooth. Thus $f$ does not yield a natural pull-back operator on de Rham cohomology. However, the pull-back induced by $f$ induces a linear self-map of the partial Sobolev space $W^{d,\frac{n}{k},\frac{n}{k+1}}_{\text{loc}}(\wedge^k M)$ for $0 < k < n$. In the case of the complex $\cesobtloc(\wedge^* M)$, the pull-back extends to the whole complex by the higher integrability of quasiregular mappings.

After these preliminaries, the proof of Theorem \ref{alleigenvaluessamediag} consists of the following three steps. First we show that a proper quasiregular mapping induces a push-forward $f_* \colon \widetilde W^d_{\CE}(\wedge^* M) \to \widetilde W^d_{\CE}(\wedge^* M)$ in the conformal Sobolev chain complex and, \emph{a fortiori}, a corresponding homomorphism in the cohomology $H^*_\CE$; cf.\ Edmonds \cite{Edmonds-MMJ}.

\begin{thm}\label{thm:push_forward_cohomology}
	Let $f\colon M\to N$ be a quasiregular mapping between closed, connected, oriented Riemannian manifolds $M$ and $N$. Then there exists a (natural) linear map $f_* \colon H^*_\CE(M)\to H^*_\CE(N)$ satisfying $f_* f^* = (\deg f) \id$.
\end{thm}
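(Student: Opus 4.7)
The natural definition of $f_*$ on forms is the fiber-sum of pullbacks under local inverses. Let $B_f \subset M$ denote the branch set of $f$; since $f$ is a quasiregular map between closed $n$-manifolds of the same dimension, $B_f$ and its image $f(B_f)$ have topological dimension at most $n-2$, and in particular zero $(n-1)$-dimensional Hausdorff measure. On $N_0 := N \setminus f(B_f)$, the map $f$ restricts to a $d$-sheeted covering with $d = \deg f$, and locally around every $y \in N_0$ there exist $d$ quasiconformal inverse branches $g_1, \ldots, g_d$ of $f$, each with distortion depending only on that of $f$. For a locally integrable $k$-form $\omega$ on $M$ I would define
\[
    (f_* \omega)(y) \;:=\; \sum_{i=1}^d g_i^* \omega(y), \qquad y \in N_0.
\]
This expression is independent of the labeling of the local branches and therefore glues to a well-defined form on $N_0$.

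The central analytic task is to show that $f_* \omega$ extends to an element of $\cesobtloc(\wedge^k N)$ and that $d \circ f_* = f_* \circ d$ holds weakly on all of $N$. The conformal change-of-variables estimate for quasiconformal maps gives, pointwise, $\abs{g_i^* \omega}^{n/k} \leq K \cdot J_{g_i} \cdot \abs{\omega \circ g_i}^{n/k}$, whence
\[
    \int_{V_i} \abs{g_i^* \omega}^{n/k} \dd y \;\leq\; K \int_{U_i} \abs{\omega}^{n/k} \dd x,
\]
and summing over the $d$ branches (whose domains $U_i$ tile $M \setminus B_f$) delivers the $L^{n/k}_\loc$-bound on $f_* \omega$; the same estimate applied to the $(k{+}1)$-form $d\omega$ produces the $L^{n/(k+1)}_\loc$-bound on $f_*(d\omega)$. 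Since pullback commutes with $d$ on each sheet, we obtain $d(f_*\omega) = f_*(d\omega)$ on $N_0$. Extending across $f(B_f)$ is the main obstacle: I would invoke a removable-singularity statement for Sobolev forms, to the effect that closed subsets of zero $(n-1)$-dimensional Hausdorff measure are negligible for distributional exterior derivatives in $L^q$. This yields both that $f_*\omega \in \cesobtloc(\wedge^k N)$ and that the weak differential $d(f_*\omega)$ on all of $N$ equals $f_*(d\omega)$. The endpoint cases $k \in \{0, n-1, n\}$, where $\cesobtloc(\wedge^k M)$ is defined via modified function spaces, must be treated separately; here the higher integrability of quasiregular mappings (both of $J_f$ and of $\abs{Df}$) supplies the small amount of extra summability needed.

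Finally, the identity $f_* f^* = (\deg f) \id$ is almost immediate at the form level. For $\omega$ a form on $N$ and $y \in N_0$,
\[
    (f_* f^* \omega)(y) \;=\; \sum_{i=1}^d g_i^*(f^* \omega)(y) \;=\; \sum_{i=1}^d (f \circ g_i)^* \omega(y) \;=\; d \cdot \omega(y),
\]
since $f \circ g_i = \id$ on its domain. Because $N_0$ has full measure in $N$, this identity persists as one of $L^{n/k}_\loc$-forms and therefore descends to $\cehom{*}(N)$. The already-established commutation $d \circ f_* = f_* \circ d$ ensures that $f_*$ preserves closed and exact forms, so it induces a linear map $f_* \colon \cehom{*}(M) \to \cehom{*}(N)$ with the stated property.
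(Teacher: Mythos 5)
Your definition of $f_*$ as the fiberwise sum of pullbacks under local inverse branches, the $L^{n/k}$-estimate via the conformal change of variables, and the verification of $f_*f^*=(\deg f)\id$ on the form level all match the paper's construction (the paper makes the branches globally measurable via a Vitali covering of $N\setminus fB_f$, but that is a technicality). The genuine gap is in the step $d\circ f_*=f_*\circ d$. You propose to prove the identity on $N\setminus fB_f$ and then remove $fB_f$ using the claim that $B_f$ and $fB_f$ have zero $(n-1)$-dimensional Hausdorff measure. That claim is false in general: for $n\ge 3$ the branch set of a quasiregular map has topological dimension at most $n-2$ and Lebesgue measure zero, but its Hausdorff dimension can exceed $n-1$ (and can be arbitrarily close to $n$); the known quantitative bounds on $\dim_H(fB_f)$ only give $\dim_H(fB_f)\le n-c(n,K)$, not $\mathcal{H}^{n-1}(fB_f)=0$. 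Moreover, even granting such a measure bound, the removability lemma you invoke is not free for forms: to kill the boundary term against a cutoff $\chi_\eps$ collapsing onto $fB_f$ one must control $\norm{d\chi_\eps}_{(n/k)'}$, and a covering of a set of zero $\mathcal{H}^{n-1}$-measure only makes $\norm{d\chi_\eps}_1$ small; for $k\ge 1$ the exponent $(n/k)'=n/(n-k)>1$ requires a strictly smaller Hausdorff dimension of the exceptional set. So as written this step does not go through.

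The paper avoids the branch set entirely at this point by a duality argument: for a smooth test form $\eta$ it writes $\int_N\langle f_*d\omega,\eta\rangle\vol_N=\int_N f_*(d\omega\wedge f^*\hodge\eta)$, uses the projection formula $f_*(\alpha\wedge f^*\beta)=(f_*\alpha)\wedge\beta$ together with $\int_N f_*\mu=\int_M\mu$, and then applies the Leibniz rule for the Sobolev wedge product and Stokes' theorem on the \emph{closed} manifold $M$ to conclude $\int_M d(\omega\wedge f^*\hodge\eta)=0$; this identifies $f_*d\omega$ as the weak exterior derivative of $f_*\omega$ directly against all test forms on $N$, with no removability input. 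If you want to keep your structure, you should replace the removability step by this integration-by-parts computation (the only analytic ingredients are the density of smooth forms in the partial Sobolev spaces and the higher integrability of $J_f$ and $J_f^{-1}$, which you already use for the endpoint cases $k\in\{0,n-1,n\}$).
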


We expect that the existence of the aforementioned push-forward operator for the conformal Sobolev complex is known to the experts. However, we have not found it discussed in the literature; see Heinonen--Kilpel\"ainen--Martio \cite[pp. 263-268]{HeinonenKilpelainenMartio2006book}, or \cite{OkuyamaPankka2014aper}, for the push-forward of functions.

The push-forward operator yields the following estimate of a cohomology class $c \in H^*_\CE(M)$; here $\norm{\cdot}_{n/k}$ is the conformally invariant norm
\[
	\norm{c}_\frac{n}{k} 
	= \inf_{\omega \in c} \left( \int_M |\omega|^\frac{n}{k} \vol_M \right)^\frac{k}{n}
\]
of cohomology classes in $H^k_\CE(M)$.

\begin{thm}\label{thm:estimate}
	Let $f\colon M \to M$ be a $K$-quasiregular self-map of a closed, connected, oriented Riemannian $n$-manifold $M$, and let $c\in H^k_\CE(M)$. Then there exists $C=C(n,K)$ for which 
	\[
		\frac{(\deg f)^\frac{k}{n}}{C} \norm{c}_\frac{n}{k} \le \norm{f^* c}_\frac{n}{k} \le C (\deg f)^\frac{k}{n} \norm{c}_\frac{n}{k}.
	\]
\end{thm}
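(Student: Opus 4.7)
The plan is to prove the upper and lower bounds separately. The upper bound will follow directly from the pointwise quasiregular distortion inequality, whereas the lower bound will exploit the identity $f_* f^* = (\deg f)\,\id$ from Theorem~\ref{thm:push_forward_cohomology} together with a norm estimate on the push-forward.

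For the upper bound, I would fix a representative $\omega \in c$ and use the elementary pointwise estimate
\[
    |f^*\omega|(x) \le |\wedge^k Df(x)| \cdot |\omega(f(x))| \le |Df(x)|^k |\omega(f(x))|.
\]
Raising to the $n/k$-th power and applying $|Df|^n \le K J_f$ gives $|f^*\omega|^{n/k} \le K J_f \cdot |\omega \circ f|^{n/k}$ almost everywhere on $M$. Integrating and invoking the area formula for quasiregular mappings yields
\[
    \int_M |f^*\omega|^{n/k}\vol_M \le K (\deg f) \int_M |\omega|^{n/k}\vol_M.
\]
Taking the $(k/n)$-th root and passing to the infimum over $\omega \in c$ would produce the upper bound $\norm{f^*c}_{n/k} \le K^{k/n}(\deg f)^{k/n}\norm{c}_{n/k}$.

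For the lower bound, I would combine Theorem~\ref{thm:push_forward_cohomology} with a norm estimate of the form
\[
    \norm{f_* c'}_{n/k} \le C'(n,K)\,(\deg f)^{(n-k)/n}\norm{c'}_{n/k}
\]
for the push-forward. Applying this to $c' = f^* c$ and using $f_*f^* c = (\deg f)\, c$ gives
\[
    (\deg f)\norm{c}_{n/k} = \norm{f_*f^*c}_{n/k} \le C'(\deg f)^{(n-k)/n}\norm{f^*c}_{n/k},
\]
which rearranges to the required lower bound with constant $C' = C(n,K)$. The exponents will balance exactly because $1 - (n-k)/n = k/n$.

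The technical heart of the argument is thus the norm estimate on $f_*$, which I would derive at the level of representative forms. Near a regular value $y$ with preimages $x_1,\ldots, x_m$ (so $m \le \deg f$), the push-forward is locally given by $\sum_i (f|_{U_i})^{-1,*}\eta$, yielding the pointwise bound $|f_*\eta|(y) \le \sum_{i} |\wedge^k (Df(x_i))^{-1}|\, |\eta(x_i)|$. The singular-value identity $|\wedge^k A^{-1}| = |\det A|^{-1} |\wedge^{n-k} A|$ combined with $|\wedge^{n-k}Df| \le |Df|^{n-k} \le K^{(n-k)/n}J_f^{(n-k)/n}$ at regular points yields
\[
    |\wedge^k (Df(x))^{-1}|^{n/k} \le K^{(n-k)/k}\, J_f(x)^{-1}.
\]
Applying the power-mean inequality to the sum over at most $\deg f$ preimages and integrating via the area formula—so that the factor $J_f^{-1}$ cancels the Jacobian in the change of variables—would yield
\[
    \int_M |f_*\eta|^{n/k}\vol_M \le (\deg f)^{n/k - 1} K^{(n-k)/k}\int_M|\eta|^{n/k}\vol_M,
\]
and taking $(k/n)$-th roots then gives the required bound with $C'(n,K) = K^{(n-k)/n}$. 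Passing to the infimum over representatives promotes this to the cohomological norm. The main obstacle I anticipate is justifying that both the pointwise pullback and push-forward formulas, as well as the area formula, apply to representatives in $\cesobtloc(\wedge^* M)$—in particular, handling the modified endpoint spaces at $k\in\{0,n-1,n\}$ and ensuring the estimates hold a.e.\ despite the presence of the branch set (which has measure zero) and the mere Sobolev regularity of the forms involved.
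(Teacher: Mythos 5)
Your proposal is correct and follows essentially the same route as the paper: the upper bound via the pointwise distortion estimate for pull-backs integrated by the change of variables (Lemma \ref{qrnormestimate}), and the lower bound via $\push{f} f^* = (\deg f)\id$ on cohomology combined with the bound $\norm{\push{f}\eta}_{n/k} \le C(n,k,K)(\deg f)^{(n-k)/n}\norm{\eta}_{n/k}$, which the paper derives exactly as you sketch, by pulling back along the quasiconformal inverse branches off the branch set and applying the power-mean inequality. The technical points you flag---that $\push{f}$ is well defined almost everywhere, lands in the conformal Sobolev complex, commutes with $d$, and hence descends to cohomology---are precisely what the paper's push-forward section (Lemmas \ref{pushwelldefined}--\ref{qrpushflatsharp} and Corollary \ref{qrpushcohom}) establishes before the norm estimate is proved.
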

The subtlety here is that, on the level of the chain complex, $f^*$ need not be surjective from the cohomology class $c$ to the cohomology class $f^*c$; this issue is addressed with the push-forward operator $f_*$ on the level of forms.

Having Theorem \ref{thm:estimate} at our disposal, we obtain Theorem \ref{alleigenvaluessamediag} by applying a complexification of Theorem \ref{thm:estimate} to the iterates of the uniformly quasiregular map $f$ and to an eigenvector class $c$ with eigenvalue $\lambda$. Since the iterates $f^m$ are $K$-quasiregular for $m \in \Z_+$, the constant $C$ of Theorem \ref{thm:estimate} is independent of $m$, which gives $\abs{\lambda} = (\deg f)^{k/n}$ in the limit. Diagonalizability is obtained by considering the Jordan decomposition of the matrix of the linear mapping $f^* \colon H^k_\CE(M)\to H^k_\CE(M)$.

\bigskip
\noindent
{\bf Acknowledgments.} We thank Marc Troyanov for discussions on conformal cohomology. The second author thanks Tuomas Sahlsten for discussions on the dynamics of uniformly quasiregular mappings. We would also like to thank the referee for kind remarks and suggestions improving the manuscript.


\section{Preliminaries}

Let $(M,\langle \cdot, \cdot \rangle)$ be a connected and oriented Riemannian $n$-manifold for $n\ge 2$, where $\langle \cdot, \cdot \rangle$ denotes the Riemannian metric of $M$. Let $\sigma \colon TM \to T^*M$ be the bundle isomorphism associated to the Riemannian metric, that is, $\sigma(v)(w) = \langle w,v \rangle$ for $x\in M$ and $w,v\in T_xM$. We denote the induced Riemannian metric on exterior powers $\wedge^k T^*M$ of the cotangent bundle $T^*M$ also by $\langle \cdot, \cdot \rangle$, that is, for each $x\in M$, $\langle \cdot, \cdot \rangle$ is the Grassmann inner product on $\wedge^k T_x^* M$ satisfying 
\[
	\langle \sigma(v_1)\wedge \cdots \wedge \sigma(v_k), \sigma(w_1)\wedge \cdots \wedge \sigma(w_k)\rangle 
	= \det \left(\langle v_i,w_j \rangle\right)_{ij}
\]
for all $v_1,\ldots, v_k,w_1,\ldots, w_k\in T_x M$. We denote by $|\cdot|\colon \wedge^k T^* M \to [0,\infty)$ the associated norm, $\omega \mapsto \langle \omega, \omega \rangle^{1/2}$, induced by the inner product $\langle \cdot, \cdot \rangle$. Let also $\vol_M$ be the \emph{volume form on $M$} determined by the Riemannian metric and compatible with the chosen orientation on $M$.

For each $k\in \N$, we denote by $\hodge \colon \wedge^k T^*M \to \wedge^{n-k} T^*M$ the (point-wise) \emph{Hodge star-operator on $M$} determined by 
\[
	\alpha \wedge \hodge \beta = \langle \alpha, \beta \rangle \vol_M(x)
\]
for each $\alpha, \beta \in \wedge^k T^*_xM$ and $x\in M$.

\subsection{Sobolev spaces of differential forms}

We now briefly recall the partial Sobolev spaces $W^{d,p,q}(\wedge^k M)$ on $M$; see Gol'dstein--Troyanov \cite{GoldsteinTroyanov2006paper}, Iwaniec--Lutoborski \cite{IwaniecLutoborski1993paper}, and Iwaniec--Scott--Stroffolini \cite{IwaniecScottStroffolini1999paper} for more details.

Given $k\in \N$, we call a measurable section $M \to \wedge^k T^*M$ a \emph{measurable $k$-form on $M$}; note that $\wedge^k T^*M$ is trivial for $k>n$. Further, given $p\in [1,\infty)$, we denote by $L^p(\wedge^k M)$ the space of all $p$-integrable $k$-forms, that is, the space of measurable $k$-forms $\omega \colon M \to \wedge^k T^*M$ for which the $L^p$-norm
\[
	\norm{\omega}_p = \left(\int_M  |\omega|^p \vol_M \right)^{1/p}
\]
is finite. The space $L^\infty(\wedge^k M)$ of essentially bounded $k$-forms is defined, as usual, to consist of those $k$-forms $\omega$ for which 
\[
	\norm{\omega}_\infty = \mathrm{esssup}_{x\in M} |\omega_x| <\infty.
\]

A $k$-form $\omega$ belongs to the local space $L^p_\loc(\wedge^k M)$ if the point-wise norm function $x\mapsto |\omega_x|$ is in the local space $L^p_\loc(M)$. We denote by $C^\infty(\wedge^k M)$ and $C^\infty_0(\wedge^k M)$ the spaces of smooth $k$-forms and compactly supported smooth $k$-forms on $M$, respectively.

Let $\omega \in L^p(\wedge^k M)$ for some $p \geq 1$. A measurable $(k+1)$-form $d\omega$ on $M$ is a \emph{weak exterior derivative of $\omega$} if, for all $\eta \in C_0^\infty(\wedge^{k+1}M)$, 
\[
	\int_M \left<d\omega, \eta\right> \vol_M = \int_M \left<\omega, d^*\eta\right> \vol_M,
\]
where $d^*$ is the \emph{coexterior derivative} $d^* = (-1)^{nk+1}\hodge d \hodge$. The weak exterior derivative $d\omega$ of $\omega$ is unique up to a set of measure zero. 

The \emph{partial Sobolev $(p,q)$-space $W^{d,p,q}(\wedge^k M)$ of $k$-forms on $M$} is the space of all $k$-forms $\omega \in L^p(\wedge^k M)$ having a weak exterior derivative $d\omega$ in $L^q(\wedge^{k+1} M)$. We call the space $W^{d,p,p}(\wedge^k M)$ the \emph{partial Sobolev space} and denote 
\[
	W^{d,p}(\wedge^k M) = W^{d,p,p}(\wedge^k M).
\]
We denote by $\norm{\cdot}_{d,p,q}$ the norm 
\[
	\norm{\omega}_{d,p,q} = \norm{\omega}_p + \norm{d\omega}_q
\]
on $W^{d,p,q}(\wedge^k M)$. As usual, the local space $W^{d, p, q}_\loc(\wedge^k M)$ is the space of $k$-forms in $L^p_\loc(\wedge^k M)$ having a weak exterior derivative in $L^q_\loc(\wedge^{k+1}M)$.

Similarly as the spaces $W^{d,p}(\wedge^k M)$, the spaces $W^{d,p,q}(\wedge^k M)$ are also complete for all $p,q\in [1,\infty]$. We start the proof with an auxiliary lemma.

\begin{lemma}\label{weakdbysmoothlimit}
	Let $p,q\in [1,\infty]$, $\omega \in L^p(\wedge^k M)$, and $\zeta \in L^q(\wedge^{k+1} M)$. Suppose that there exists a sequence $(\omega_i)$ in the space $W^{d, p, q}(\wedge^k M)$ for which $\omega_i \to \omega$ in $L^p(\wedge^k M)$, and $d\omega_i \to \zeta$ in $L^q(\wedge^{k+1}M)$. Then $\zeta$ is a weak exterior derivative of $\omega$.
\end{lemma}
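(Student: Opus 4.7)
The plan is to verify the defining integration-by-parts identity for $\zeta$ directly from the definition, by passing to the limit in the corresponding identity for each $\omega_i$. Fix an arbitrary test form $\eta \in C_0^\infty(\wedge^{k+1} M)$. Since each $\omega_i$ lies in $W^{d,p,q}(\wedge^k M)$ with weak exterior derivative $d\omega_i$, the defining identity
\[
	\int_M \langle d\omega_i, \eta \rangle \vol_M = \int_M \langle \omega_i, d^* \eta \rangle \vol_M
\]
holds for every $i$. The goal is then to show that the left-hand side converges to $\int_M \langle \zeta, \eta \rangle \vol_M$ and the right-hand side to $\int_M \langle \omega, d^* \eta \rangle \vol_M$, since this is exactly the statement that $\zeta$ is a weak exterior derivative of $\omega$.

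Both convergences are routine applications of Hölder's inequality together with the pointwise Cauchy--Schwarz bound $|\langle \alpha, \beta \rangle| \le |\alpha||\beta|$. For the right-hand side, writing $1/p + 1/p' = 1$, I would estimate
\[
	\left| \int_M \langle \omega_i - \omega, d^*\eta \rangle \vol_M \right|
	\le \norm{\omega_i - \omega}_p \, \norm{d^*\eta}_{p'},
\]
and for the left-hand side, with $1/q + 1/q' = 1$,
\[
	\left| \int_M \langle d\omega_i - \zeta, \eta \rangle \vol_M \right|
	\le \norm{d\omega_i - \zeta}_q \, \norm{\eta}_{q'}.
\]
Because $\eta$ is smooth and compactly supported, both $\eta$ and $d^*\eta = (-1)^{nk+1}\hodge d \hodge \eta$ are smooth and compactly supported, hence bounded and in every $L^r(\wedge^{*} M)$ for $r \in [1,\infty]$; in particular the norms $\norm{\eta}_{q'}$ and $\norm{d^*\eta}_{p'}$ are finite for all admissible $p, q \in [1,\infty]$, including the endpoint cases. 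Thus the right-hand side estimates tend to zero by the hypothesized convergences $\omega_i \to \omega$ in $L^p$ and $d\omega_i \to \zeta$ in $L^q$.

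I do not expect any real obstacle here; the only thing to keep straight is that the compact support and smoothness of the test form $\eta$ make the dual integrability exponents $p'$ and $q'$ irrelevant, so one does not need to impose any further restriction on $p, q$ beyond $[1,\infty]$. Once both limits are justified, passing to the limit in the identity above yields
\[
	\int_M \langle \zeta, \eta \rangle \vol_M = \int_M \langle \omega, d^* \eta \rangle \vol_M
\]
for every $\eta \in C_0^\infty(\wedge^{k+1}M)$, which is precisely the definition of $\zeta$ being a weak exterior derivative of $\omega$, completing the proof.
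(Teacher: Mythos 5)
Your proposal is correct and follows essentially the same argument as the paper: fix a test form $\eta$, use the weak-derivative identity for each $\omega_i$, and pass to the limit via H\"older's inequality with dual exponents $p^*$ and $q^*$, noting that $\norm{\eta}_{q^*}$ and $\norm{d^*\eta}_{p^*}$ are finite since $\eta$ is smooth and compactly supported. No gaps.
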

\begin{proof}
	Let $\eta\in C^\infty_0(\wedge^{k+1} M)$ be a smooth test function. Then
	\begin{align*}
		&\abs{\int_M \langle \omega, d^*\eta\rangle\vol_M - \int_M \left<\zeta, \eta\right>\vol_M}\\
		&\qquad\leq \abs{\int_M \langle \omega-\omega_i, d^*\eta\rangle \vol_M}
			+ \abs{\int_M \langle \zeta - d\omega_i, \eta \rangle \vol_M}\\
		&\qquad\leq \norm{\omega-\omega_i}_{p}\norm{d^*\eta}_{p^*}
			+ \norm{\zeta - d\omega_i}_{q}\norm{\eta}_{q^*} \to 0
	\end{align*}
	as $i \to \infty$, where $p^*$ and $q^*$ are dual exponents of $p$ and $q$, respectively. Thus $\zeta = d\omega$, which concludes the proof. 
\end{proof}
\begin{lemma}\label{partialsobolevbanach}
	The partial Sobolev spaces $W^{d,p,q}(\wedge^k M)$ of $k$-forms on $M$ are Banach spaces for $p,q\in [1,\infty]$. 
\end{lemma}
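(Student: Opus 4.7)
The plan is to reduce completeness of $W^{d,p,q}(\wedge^k M)$ to completeness of the component $L^p$ and $L^q$ spaces via the limit characterization of the weak exterior derivative given in Lemma \ref{weakdbysmoothlimit}. First I would check the routine fact that $\norm{\cdot}_{d,p,q}$ really is a norm: homogeneity and the triangle inequality follow directly from those of the Lebesgue norms together with linearity of the weak exterior derivative, and vanishing of $\norm{\omega}_{d,p,q}$ forces $\norm{\omega}_p = 0$, hence $\omega = 0$ a.e.

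For completeness, I would start with an arbitrary Cauchy sequence $(\omega_i)$ in $W^{d,p,q}(\wedge^k M)$. By definition of the norm, $(\omega_i)$ is Cauchy in $L^p(\wedge^k M)$ and $(d\omega_i)$ is Cauchy in $L^q(\wedge^{k+1} M)$. Using completeness of the Lebesgue spaces $L^p(\wedge^k M)$ and $L^q(\wedge^{k+1} M)$ — which is standard, since measurable sections of a Riemannian vector bundle with finite $L^p$ norm form a Banach space, just as in the scalar case — we obtain limits $\omega \in L^p(\wedge^k M)$ and $\zeta \in L^q(\wedge^{k+1} M)$ with $\omega_i \to \omega$ in $L^p$ and $d\omega_i \to \zeta$ in $L^q$.

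At this point Lemma \ref{weakdbysmoothlimit} applies directly and yields that $\zeta$ is a weak exterior derivative of $\omega$. Consequently $\omega \in W^{d,p,q}(\wedge^k M)$ with $d\omega = \zeta$, and
\[
\norm{\omega_i - \omega}_{d,p,q} = \norm{\omega_i - \omega}_p + \norm{d\omega_i - \zeta}_q \to 0
\]
as $i \to \infty$, which is the required convergence in $W^{d,p,q}(\wedge^k M)$.

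There is no real obstacle: the only mildly delicate point is ensuring the $L^p$ and $L^q$ completeness in the setting of bundle-valued sections (as opposed to scalar functions), but this reduces to the scalar case by a partition of unity and local trivializations, or more directly via the pointwise norm $|\cdot|$ used in the definition of $\norm{\cdot}_p$. Everything else is a direct application of Lemma \ref{weakdbysmoothlimit}, so I would keep the write-up short.
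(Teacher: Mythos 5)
Your proposal is correct and follows essentially the same route as the paper: take a Cauchy sequence, pass to $L^p$/$L^q$ limits by completeness of the Lebesgue spaces, and invoke Lemma \ref{weakdbysmoothlimit} to identify the limit of the derivatives as the weak exterior derivative of the limit form. The extra remarks on the norm axioms and on bundle-valued $L^p$ completeness are fine but not needed beyond what the paper already takes for granted.
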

\begin{proof}
	Let $p,q\in [1,\infty]$ and let $(\omega_i)$ be a Cauchy-sequence in the space $W^{d,p,q}(\wedge^k M)$. Then, by completeness of $L^p$-spaces, the sequences $(\omega_i)$ and $(d\omega_i)$ converge to forms $\omega$ and $\zeta$ in $L^p(\wedge^k M)$ and $L^q(\wedge^{k+1}M)$, respectively. Then by Lemma \ref{weakdbysmoothlimit}, $\zeta = d\omega$, and consequently $\omega \in W^{d,p,q}(\wedge^k M)$ and $\omega_i \to \omega$ in $W^{d,p,q}(\wedge^k M)$. Hence, $W^{d,p,q}(\wedge^k M)$ is complete.
\end{proof}

If $M$ is closed, by Iwaniec--Scott--Stroffolini \cite[Corollary 3.6]{IwaniecScottStroffolini1999paper}, the smooth forms $C^\infty(\wedge^k M)$ are dense in $W^{d,p}(\wedge^k M)$ for $p\in [1,\infty)$. The argument of Iwaniec--Scott--Stroffolini also yields that $C^\infty(\wedge^k M)$ is dense in $L^p(\wedge^k M)$ and $W^{d,p,q}(\wedge^k M)$ for all $p,q\in [1,\infty)$. We refer to \cite{IwaniecScottStroffolini1999paper} for details.

The Sobolev--Poincar\'e inequality of Gol'dshtein and Troyanov (see \cite[Theorem 1.1 and Appendix A]{GoldsteinTroyanov2006paper}) for $k$-forms on $M$ states that for  
\begin{equation}\label{eq:pqn}
	\frac{1}{q} - \frac{1}{p} \le \frac{1}{n} 
\end{equation}
and a $k$-form $\omega \in W^{d,p,q}(\wedge^k M)$ on a closed manifold $M$, there exists a constant $C = C(M,p,q)\ge 1$ and a closed $k$-form $\zeta$ for which 
\begin{equation}\label{eq:SP}
	\norm{\omega -\zeta}_p \le C \norm{d\omega}_q;
\end{equation}
see Iwaniec--Lutoborski \cite[Corollary 4.1]{IwaniecLutoborski1993paper} for the corresponding result in the closed Euclidean ball. We also use the fact that the dependence of $\zeta$ on $\omega$ is linear. It is crucial to note that the inequality is only given for $p, q \in (1, \infty)$.

An immediate corollary of this Sobolev--Poincar\'e inequality is that the image of the exterior derivative $d\colon W^{d,p,q}(\wedge^k M) \to L^q(\wedge^{k+1}M)$ is a closed subspace; the same holds in the case of a closed Euclidean ball.

\begin{cor}\label{lpqcompleteness}
	Let $M$ be a closed $n$-manifold and suppose that exponents $p, q \in (1, \infty)$ satisfy \eqref{eq:pqn}. Then the space $dW^{d, p, q}(\wedge^k M)$ is a closed subspace of $L^q(\wedge^{k+1}M)$.
\end{cor}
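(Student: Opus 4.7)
The plan is a direct application of the Sobolev--Poincar\'e inequality \eqref{eq:SP}, exploiting the linearity of the assignment $\omega \mapsto \zeta$ flagged just above the corollary. Let me describe the steps.

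First, I would take a sequence $(\omega_i) \subset W^{d,p,q}(\wedge^k M)$ with $d\omega_i \to \xi$ in $L^q(\wedge^{k+1} M)$, and aim to produce $\omega \in W^{d,p,q}(\wedge^k M)$ with $d\omega = \xi$. The trouble is that $\omega_i$ itself need not converge in $L^p$ --- closed forms can be freely added to $\omega_i$ without affecting $d\omega_i$. The Sobolev--Poincar\'e inequality \eqref{eq:SP} is exactly what resolves this ambiguity: for each $i$ one obtains a closed form $\zeta_i$ with
\[
	\norm{\omega_i - \zeta_i}_p \le C \norm{d\omega_i}_q,
\]
where the constant $C = C(M,p,q)$ is admissible because $p,q \in (1,\infty)$ and \eqref{eq:pqn} holds. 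Let $\tilde\omega_i = \omega_i - \zeta_i$, so that $d\tilde\omega_i = d\omega_i$.

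Next I would use that the map $\omega \mapsto \zeta$ can be chosen linearly in $\omega$ (a fact explicitly recorded in the excerpt). Applying \eqref{eq:SP} to the difference $\omega_i - \omega_j$ gives
\[
	\norm{\tilde\omega_i - \tilde\omega_j}_p \le C \norm{d\omega_i - d\omega_j}_q.
\]
Since $(d\omega_i)$ converges in $L^q$, it is Cauchy, hence $(\tilde\omega_i)$ is Cauchy in $L^p(\wedge^k M)$. By completeness of $L^p$, there is $\omega \in L^p(\wedge^k M)$ with $\tilde\omega_i \to \omega$ in $L^p$. Combined with $d\tilde\omega_i = d\omega_i \to \xi$ in $L^q$, Lemma \ref{weakdbysmoothlimit} yields that $\xi$ is the weak exterior derivative of $\omega$. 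In particular, $\omega \in W^{d,p,q}(\wedge^k M)$ and $\xi = d\omega \in dW^{d,p,q}(\wedge^k M)$, proving closedness of the image.

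I do not expect a serious obstacle here: the only conceptual point is to recognize that the Sobolev--Poincar\'e inequality converts $L^q$-Cauchyness of $(d\omega_i)$ into $L^p$-Cauchyness of a suitably gauged representative $\tilde\omega_i$, and the linearity of the gauge choice is the crucial ingredient that lets this argument go through. The restriction $p,q \in (1,\infty)$ together with \eqref{eq:pqn} is exactly what is needed to invoke \eqref{eq:SP}, so the hypotheses match what the proof requires.
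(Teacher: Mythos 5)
Your proposal is correct and follows essentially the same route as the paper: both apply the Sobolev--Poincar\'e inequality \eqref{eq:SP} to differences $\omega_i - \omega_j$ (via the linear choice of the closed gauge forms $\zeta_i$) to get $L^p$-Cauchyness of $\omega_i - \zeta_i$ from $L^q$-Cauchyness of $d\omega_i$, and then pass to the limit using Lemma \ref{weakdbysmoothlimit}. No gaps.
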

\begin{proof}
	Let $(\tau_i) = (d\omega_i)$ be a Cauchy-sequence, with respect to the $L^q$-norm, in $dW^{d, p, q}(\wedge^k M)$. Then $(\tau_i)$ converges to some $\tau\in L^q(\wedge^{k+1}M)$ in the $L^q$-norm. By the Sobolev--Poincar\'e inequality \eqref{eq:SP}, there exist closed $k$-forms $\zeta_i$ in $L^p(\wedge^k M)$ for which
	\begin{align*}
		\norm{(\omega_i -\zeta_i) - (\omega_j -\zeta_j)}_p
		&= \norm{(\omega_i -\omega_j) - (\zeta_i -\zeta_j)}_p\\
		&\leq C\norm{d(\omega_i - \omega_j)}_q
		= C\norm{\tau_i - \tau_j}_q.
	\end{align*}
	As such, the sequence $(\omega_i - \zeta_i)$ converges to some $\omega \in W^{d, p, q}(\wedge^k M)$. Since $d(\omega_i - \zeta_i) = \tau_i$ and $\omega_i - \zeta_i \to \omega$ in the $W^{d,p,q}$-norm as $i\to \infty$, we conclude that $\tau = d\omega$.
\end{proof}
	
\subsection{Quasiregular mappings and Sobolev forms}\label{subsect:qr}

We recall that a continuous mapping $f\colon M \to N$ between oriented Riemannian $n$-manifolds is \emph{$K$-quasiregular} for $K\ge 1$ if $f$ belongs to the local Sobolev space $W^{1,n}_\loc(M;N)$ of mappings $M\to N$ and satisfies the distortion estimate
\begin{equation}\label{eq:qrdef}
	\norm{Df}^n \le K J_f \quad \text{a.e.\ on}\ M,
\end{equation}
where $\norm{Df}$ is the operator norm of the weak differential $Df$ of $f$, defined by 
\[
	\norm{Df(x)} = \max_{\substack{v\in T_x M \\ |v|=1}} |Df(x) v|
\]
for almost every $x\in M$, and $J_f$ is the Jacobian determinant $\det Df$. For the basic properties of quasiregular mappings, see for example Rickman \cite{Rickman1993book} and Iwaniec--Martin \cite{IwaniecMartin2001book}.

In the above definition of quasiregular mappings, the local Sobolev space $W^{1,n}_\loc(M;N)$ is defined using an isometric embedding of the manifold $N$ into a Euclidean space; see e.g. Haj\l asz--Iwaniec--Mal\'y--Onninen \cite{HajlaszIwaniecMalyOnninen2008paper}. In short, we fix a smooth Nash embedding $\iota \colon N \to \R^m$, and say that a mapping $f\colon M\to N$ is in the local Sobolev space $W^{1,n}_\loc(M;N)$ if the coordinate functions of the map $\iota \circ f \colon M \to \R^m$ are in the Sobolev space $W^{1,n}_\loc(M)$. For $f \in W^{1,n}_\loc(M;N)$, the weak derivative $Df \colon TM \to TN$ is the measurable bundle map satisfying $D(\iota \circ f) = D\iota \circ Df$; the map $Df$ is unique up to a set of measure zero. This definition is independent on the choice of the embedding $\iota$; see e.g. \cite[Section 2]{ConventvanSchaftingen2016paper}.

The pre- and post-composition of a quasiregular map with a bilipschitz map is quasiregular. In particular, if $f\colon M\to N$ is a quasiregular map between Riemannian manifolds, for each $x\in M$ and each $\varepsilon>0$ there exist $(1+\varepsilon)$-bilipschitz charts $\phi \colon U \to \R^n$ and $\psi \colon V \to \R^n$ on the manifolds $M$ and $N$, respectively, for which $x\in U$, $f(x)\in V$, and the composition $\psi \circ f \circ \phi^{-1}\colon \phi U \to \R^n$ is $(1+\varepsilon)^{4n} K$-quasiregular. See e.g.\ Kangaslampi \cite[Section 2.3]{Kangaslampi-thesis} for more discussion. Therefore, if a local property of quasiregular maps between Euclidean domains is preserved under composition by a bilipschitz map, this property also holds for quasiregular maps between Riemannian manifolds.

For the forthcoming discussion, we also record a standard point-wise estimate for the pull-back of $k$-forms. Let $f \colon M \to N$ be a $K$-quasiregular mapping between $n$-manifolds and, for $0<k\le n$, let $\omega$ be a measurable $k$-form on $N$. Then the pull-back $f^*\omega$ is a well-defined measurable form, since quasiregular maps satisfy Lusin's condition (N); see e.g.\ Rickman \cite[I.4.14]{Rickman1993book}. Moreover, there exists  $C=C(n,k,K)$ for which
\begin{equation}\label{eq:pointwise}
	\frac{1}{C} (|\omega|^{n/k} \circ f)J_f \le |f^*\omega|^{n/k} \le C (|\omega|^{n/k}\circ f)J_f
\end{equation}
holds almost everywhere on $M$.

Due to the importance of this estimate to our results, we sketch its proof for the reader's convenience. For this, it is useful to recall the pointwise \emph{comass} norm for differential forms, given by
\[
	\abs{\omega_x}_{\mass} = \sup \{ \omega_x(v) : v \in \wedge^k T_x M \text{ is simple}, \abs{v} \leq 1 \}.
\] 
For more details on the comass norm, see e.g. Federer \cite[Section 1.8]{Federer1969book}. Since quasiregular mappings are differentiable almost everywhere, the pull-back at a point $x$ is given for almost every $x$ by $(f^* \omega)_x = \omega_{f(x)} \circ \wedge^k Df(x)$. If $v \in \wedge^k T_x M$ is simple, then clearly $(\wedge^k Df(x))(v)$ is simple and $\abs{(\wedge^k Df(x))(v)} \leq \norm{Df(x)}^k \abs{v}$. Hence, we obtain for almost every $x \in M$ the estimate $\abs{(f^* \omega)_x}_{\mass} \leq \norm{Df(x)}^k \abs{\omega_{f(x)}}_{\mass}$. Since $\abs{\cdot}$ and $\abs{\cdot}_{\mass}$ are comparable by a dimensional constant, see again \cite[Section 1.8]{Federer1969book}, the upper half of estimate \eqref{eq:pointwise} now follows from inequality \eqref{eq:qrdef}. 

For the lower bound, let $l(T)$ be the minimal dilatation of the linear operator $T$ between normed spaces, that is, $l(T) = \inf\{\norm{T(v)} \colon \norm{v} = 1 \}$. By quasiregularity of $f$, the differential $Df$ of $f$ is invertible almost everywhere in $M$, and hence $\omega_{f(x)} = (f^* \omega)_x \circ (\wedge^k Df(x))^{-1} = (f^* \omega)_x \circ \wedge^k (Df(x))^{-1}$ for almost every $x\in M$. The previous estimates yield  
\[
	\abs{\omega_{f(x)}} \leq L \abs{(f^* \omega)_x} \cdot \norm{(Df(x))^{-1}}^k 
	= L \abs{(f^* \omega)_x} \cdot l(Df(x))^{-k} 
\]
for some dimensional constant $L$, and the lower bound in \eqref{eq:pointwise} is now due to the estimate $J_f \leq K^{n-1} l(Df)^n$.

Recall that a mapping $f\colon X\to Y$ between topological spaces $X$ and $Y$ is \emph{proper} if the pre-image $f^{-1}E$ of every compact set $E\subset Y$ is compact. The (global) degree $\deg f$ of a proper mapping $f\colon M\to N$ between connected and oriented $n$-manifolds $M$ and $N$ is the unique integer satisfying $f^*c_N = (\deg f)c_M$, where $c_M\in H^n_c(M;\Z)$ and $c_N\in H^n_c(N;\Z)$ are the positive generators of the compactly supported $n$-th Alexander--Spanier cohomology of $M$ and $N$, respectively. Note that $\deg f = \sum_{x \in f^{-1}\{y\}} i(x, f)$ for all $y \in N$, where $i(x, f)$ is the local index of $f$ at $x$. We recall also that, for proper non-constant quasiregular mappings, the degree is always positive. For more information, we refer to e.g.\ \cite{Massey1978book} for the Alexander--Spanier cohomology, and to \cite{Rickman1973paper}, \cite[Chapter I.4]{Rickman1993book} for the local index theory of quasiregular maps. 

By a change of variables, \eqref{eq:pointwise} immediately yields the following integral estimate in the case of proper non-constant quasiregular mappings.

\begin{lemma}\label{qrnormestimate}
	Let $0<k\leq n$ and let $f \colon M \rightarrow N$ be a proper non-constant $K$-quasiregular mapping between oriented $n$-manifolds $M$ and $N$. Then there is a constant $C = C(n, k, K)$ for which
	\[
		\frac{1}{C} (\deg f) \int_N \abs{\omega}^\frac{n}{k} \vol_N
		\leq \int_M \abs{f^*\omega}^\frac{n}{k} \vol_M
		\leq C (\deg f) \int_N \abs{\omega}^\frac{n}{k} \vol_M
	\]
	for all $k$-forms $\omega \in L^{n/k}(\wedge^k N)$.
\end{lemma}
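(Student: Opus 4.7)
The plan is to deduce Lemma \ref{qrnormestimate} by integrating the pointwise estimate \eqref{eq:pointwise} over $M$ and then applying the change-of-variables (area) formula for the proper quasiregular map $f$. Since the pointwise bound \eqref{eq:pointwise} already isolates the factor $J_f$ multiplying $|\omega|^{n/k}\circ f$, the matter is reduced to establishing a single integral identity.

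Concretely, after integrating \eqref{eq:pointwise} over $M$, the claim of the lemma is equivalent to the identity
\[
	\int_M \bigl(|\omega|^{n/k}\circ f\bigr) J_f \, \vol_M = (\deg f) \int_N |\omega|^{n/k} \vol_N.
\]
To obtain this, I would invoke the standard area formula in the form
\[
	\int_M (g\circ f)(x)\, J_f(x)\, \vol_M(x) = \int_N g(y)\, N(f,y)\, \vol_N(y)
\]
valid for every Borel $g\ge 0$ on $N$, where $N(f,y) = \#f^{-1}\{y\}$. This follows from the fact that $f \in W^{1,n}_{\loc}$ is differentiable almost everywhere and satisfies Lusin's condition (N); via the $(1+\eps)$-bilipschitz charts described in Section~\ref{subsect:qr} one reduces to the Euclidean area formula for Sobolev mappings, which is standard (see e.g.\ \cite{Rickman1993book}).

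Next I would show that $N(f,y) = \deg f$ for almost every $y \in N$. Since $f$ is a proper non-constant quasiregular map, the branch set $B_f$ has measure zero and, again using Lusin (N), so does $f(B_f)$. On $N\setminus f(B_f)$ the map $f$ is a local homeomorphism at every preimage point, so the local index $i(x,f)$ equals $1$ for each $x\in f^{-1}\{y\}$, and hence
\[
	N(f,y) = \#f^{-1}\{y\} = \sum_{x\in f^{-1}\{y\}} i(x,f) = \deg f
\]
there. Applying the area formula with $g = |\omega|^{n/k}$ then yields the desired identity, and combining with \eqref{eq:pointwise} completes the proof with $C$ equal to the pointwise constant of \eqref{eq:pointwise}.

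The only real obstacle is the justification of the area formula together with the measure-zero property of $f(B_f)$ on manifolds; both are classical for quasiregular maps and follow from the Euclidean case by the bilipschitz localization argument already discussed earlier in this section. Once these ingredients are in place, the proof is a direct computation and no further analytic input is required.
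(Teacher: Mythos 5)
Your proposal is correct and is essentially the paper's own argument: the paper proves this lemma in one line by integrating the pointwise estimate \eqref{eq:pointwise} and applying the change of variables with multiplicity $N(f,y)=\deg f$ almost everywhere, exactly as you spell out. Your expansion of the details (area formula via bilipschitz charts, measure-zero image of the branch set, local index $1$ off the branch set) is the intended justification.
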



\section{Conformal cohomology}\label{sect:confcohom}
	
In this section, we discuss the conformal Sobolev cohomology theory we use in the proof of Theorem \ref{alleigenvaluessamediag}.

\subsection{Flat and sharp $L^p$}
	
Let $M$ be a Riemannian $n$-manifold, $n\in \Z_+$, and $0 \le k \le n$. The \emph{flat and sharp $L^p$-spaces} $L^{p, \flat}(\wedge^k M)$ and $L^{p, \sharp}(\wedge^k M)$, for $p\in [1,\infty]$, are
\[
	L^{p, \flat}(\wedge^k M) = \bigcap_{s \in [1, p)} L^{s}(\wedge^k M)
\]
and
\[
	L^{p, \sharp}(\wedge^k M) = \bigcup_{s \in (p, \infty]} L^{s}(\wedge^k M).
\]
We also define local variants $L^{p, \flat}_{\text{loc}}(\wedge^k M)$ and $L^{p, \sharp}_{\text{loc}}(\wedge^k M)$ as usual: A $k$-form $\omega$ belongs to $L^{p, \flat}_{\text{loc}}(\wedge^k M)$ if for every $x \in M$ there exists a neighborhood $V$ of $x$ for which $\omega|_V \in L^{p, \flat}(\wedge^k V)$. The space $L^{p, \sharp}_{\text{loc}}(\wedge^k M)$ is defined similarly.
	
If $M$ has finite measure, we have the inclusions $L^{p, \sharp}(\wedge^k M) \subset L^p(\wedge^k M) \subset L^{p, \flat}(\wedge^k M)$. While the flat and sharp $L^p$-spaces are vector spaces, they have no obvious norms. See, however, e.g.\ the grand $L^p$-spaces of Iwaniec--Sbordone \cite{IwaniecSbordone1992paper} which are normed subspaces of $L^{p, \flat}$-spaces.  
	
Our interest is primarily in the spaces $L^{1, \sharp}_{\text{loc}}(\wedge^n M)$ and $L^{\infty, \flat}_{\text{loc}}(\wedge^0 M)$, since they are preserved by quasiregular mappings. We recall the following consequence of the higher integrability of quasiregular  mappings.

\begin{lemma}\label{qrpull-backhigherint}
	Let $M$ and $N$ be oriented Riemannian $n$-manifolds and let $f \colon M \to N$ be a proper non-constant quasiregular map. Then, for every $x \in M$, there exist neighborhoods $U \subset M$ of $x$ and $V = fU \subset N$ of $f(x)$ for which the following condition holds: for every $p \in (1, \infty)$, there exist $s_0 \in (1, \infty)$ and $s_n\in (1,\infty)$ for which the maps $f^* \colon L^{s_0}(\wedge^0 V) \to L^p(\wedge^0 U)$ and $f^* \colon L^{p}(\wedge^n V) \to L^{s_n}(\wedge^n U)$ are continuous. 
\end{lemma}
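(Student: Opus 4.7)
The plan is to combine the quasiregular change-of-variables formula with Hölder's inequality and higher integrability of the Jacobian. First I would fix a precompact neighborhood $U$ of $x$ so that $f|_U$ is onto $V = fU$ with uniformly bounded local multiplicity; such a $U$ exists because proper quasiregular maps are discrete and open. On $U$, Bojarski--Iwaniec higher integrability yields $J_f \in L^{1+\delta}(U)$ for some $\delta = \delta(n,K) > 0$; shrinking $U$ if necessary, one may also arrange an inverse bound $J_f^{-1} \in L^{\delta'}(U)$ for some $\delta' > 0$ via the branched-covering structure of $f|_U$ (e.g.\ $\log J_f \in \mathrm{BMO}_\loc$ together with John--Nirenberg, or a direct computation in Rickman--V\"ais\"al\"a normal coordinates where $f$ is modelled on a winding map).

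For the $n$-form case, I would write $\omega = h\vol_N$, so $f^*\omega = \pm(h\circ f)J_f\vol_M$, and apply Hölder's inequality with conjugate exponents $p/s_n$ and $p/(p-s_n)$ to the factorization
\[
	|h\circ f|^{s_n} J_f^{s_n} = \bigl(|h\circ f|^p J_f\bigr)^{s_n/p}\, J_f^{s_n(p-1)/p}.
\]
This bounds $\int_U |f^*\omega|^{s_n}\vol_M$ by a product of two factors: the first reduces, by change of variables and the bounded multiplicity of $f|_U$, to a constant multiple of $\norm{h}_{L^p(V)}^{s_n}$; the second equals $\norm{J_f}_{L^\sigma(U)}^{s_n(p-1)/p}$ with $\sigma = s_n(p-1)/(p-s_n)$, finite as soon as $\sigma \leq 1+\delta$. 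The latter rearranges to $s_n \leq (1+\delta)p/(p+\delta)$, and this upper bound exceeds $1$ precisely because $p > 1$, so valid $s_n \in (1,p)$ exist.

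For the $0$-form case, since $f^*g = g\circ f$, I would use the dual factorization
\[
	|g\circ f|^p = \bigl(|g\circ f|^{s_0}J_f\bigr)^{p/s_0}\, J_f^{-p/s_0}
\]
together with Hölder at exponents $s_0/p$ and $s_0/(s_0-p)$. The first factor is again controlled by $\norm{g}_{L^{s_0}(V)}^p$ via change of variables, while the second factor is $\norm{J_f^{-1}}_{L^{p/(s_0-p)}(U)}^{p/s_0}$, finite whenever $p/(s_0-p) \leq \delta'$; any sufficiently large $s_0$ works.

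The step I expect to be most delicate is securing the inverse bound $J_f^{-1}\in L^{\delta'}(U)$ needed for the $0$-form half: the $L^{1+\delta}$ bound on $J_f$ and the quasiregular change of variables are textbook, but the reverse bound genuinely exploits the branched-cover structure of $f$. Once both integrability exponents are in place, the rest of the argument is routine Hölder arithmetic.
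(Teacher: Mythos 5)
Your proposal is correct and follows essentially the same route as the paper's proof: restrict to a (normal) neighborhood where the multiplicity is controlled, invoke higher integrability $J_f\in L^{1+\delta}_{\loc}$ and integrability of a negative power $J_f^{-\delta'}\in L^1_{\loc}$, and then run the same H\"older/change-of-variables factorizations for the $n$-form and $0$-form cases (your exponent bound $s_n\le (1+\delta)p/(p+\delta)$ is exactly the paper's $p/q$ with $q=1+(p-1)/r$, and your $s_0\ge p(1+1/\delta')$ is exactly the paper's $pq$ with $q=1+1/\eps$). The only difference is cosmetic: the paper cites Hencl--Koskela--Zhong for the negative-exponent integrability of $J_f$, where you propose the equivalent $\log J_f\in \mathrm{BMO}_{\loc}$ plus John--Nirenberg argument, which you correctly flag as the one genuinely nontrivial input.
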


\begin{proof}
	By Martio \cite{Martio1975paper}, there exists $r > 1$ for which $J_f \in L^r_\loc(M)$; see also Meyers--Elcrat \cite{MeyersElcrat1975paper}. In addition to this, there exists $\eps > 0$ for which $J_f^{-\eps} \in L^1_\loc(M)$; see e.g.\ Hencl-Koskela-Zhong \cite{HenclKoskelaZhong2007paper} for a far-reaching discussion.
	
	Let $U$ be a normal neighborhood of $f$ at $x$, that is, a pre-compact neighborhood $U$ of $x$ for which $\partial fU = f(\partial U)$. Then $f|_U \colon U \to fU$ is proper; see e.g.\ V\"ais\"al\"a \cite[Lemma 5.1]{Vaisala1966paper}. We set $V=fU$. Since $U \subset M$ is precompact, we have $J_f \in L^r(U)$ and $J_f^{-\eps} \in L^1(U)$.
	
	Let $p \in (1, \infty)$, $\omega \in L^{p}(\wedge^n V)$ and $q = \left(r + p - 1\right)/r > 1$. Then $p > q$, and by Hölder's inequality and the change of variables,
	\begin{align}\label{eq:higherpullbackn}\begin{split}
		\int_U \abs{f^*\omega}^\frac{p}{q} 
		&\leq C\int_U \left(\abs{\omega}^\frac{p}{q} \circ f\right) J_f^\frac{p}{q}
		= C\int_U \left(\abs{\omega}^\frac{p}{q} \circ f\right) J_f^\frac{1}{q} 
				\cdot J_f^{\frac{p-1}{q}} \\
		&\leq C\left(\int_U \left(\abs{\omega}^p \circ f\right) J_f \right)^\frac{1}{q} 
			\left(\int_U J_f^{\frac{p-1}{q-1}} \right)^\frac{q-1}{q}\\
		&= C (\deg f)^\frac{1}{q} \norm{\omega}_p^\frac{p}{q} 
			\left(\int_U J_f^{r}\right)^\frac{q-1}{q}.
	\end{split}\end{align}
	Thus $f^* \colon L^{p}(\wedge^n V) \to L^{p/q}(\wedge^n U)$ is well-defined and continuous.
	
	Next, let $\omega$ be a 0-form of $V$, and let $q = 1 + 1/\eps$. Since $f^*\omega = \omega \circ f$, we have
	\begin{align}\label{eq:higherpullbackzero}\begin{split}
		\int_U \abs{f^*\omega}^p 
		&= \int_U \abs{\omega}^p \circ f 
		= \int_U \left(\abs{\omega}^p \circ f\right) 
			J_f^\frac{1}{q} \cdot J_f^{-\frac{1}{q}}\\
		&\leq \left( \int_U \left(\abs{\omega}^{pq} \circ f\right) J_f \right)^\frac{1}{q}
			\left( \int_U J_f^{-\frac{1}{q-1}} \right)^\frac{q-1}{q}\\
		&= (\deg f)^\frac{1}{q} \norm{\omega}_{pq}^p 
			\left( \int_U J_f^{-\eps} \right)^\frac{q-1}{q}.
	\end{split}\end{align}
	The continuity of $f^* \colon L^{pq}(\wedge^0 V) \to L^p(\wedge^0 U)$ for every $p \in (1, \infty)$ now follows.
\end{proof}

As an immediate consequence of Lemma \ref{qrpull-backhigherint}, we obtain that the pull-back preserves the local sharp and flat spaces $L^{1,\#}_\loc(\wedge^n M)$ and $L^{\infty,\flat}_\loc(\wedge^0 M)$. More precisely, we have the following corollary.
\begin{cor}\label{qrflatandsharp}
	Let $M$ and $N$ be oriented Riemannian $n$-manifolds, and let $f \colon M \to N$ be a proper non-constant quasiregular map. The pull-back $f^*$ maps $L^{1, \sharp}_{\text{loc}}(\wedge^n N)$ into $L^{1, \sharp}_{\text{loc}}(\wedge^n M)$ and $L^{\infty, \flat}_{\text{loc}}(\wedge^0 N)$ into $L^{\infty, \flat}_{\text{loc}}(\wedge^0 M)$.
\end{cor}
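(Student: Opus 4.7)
The plan is to derive both inclusions directly from Lemma \ref{qrpull-backhigherint} by matching a local neighborhood on which $\omega$ already has extra integrability with one on which the lemma applies. The key point I will rely on is that, by inspection, the proof of Lemma \ref{qrpull-backhigherint} uses only that $U$ is a precompact normal neighborhood of $f$ at $x$; hence one may shrink $U$ freely to any smaller normal neighborhood (which form a basis of the topology at $x$) and retain all continuity statements, because the $L^r$-bound on $J_f$ and the $L^1$-bound on $J_f^{-\eps}$ are preserved on smaller precompact subsets.

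For the sharp case, fix $x \in M$. Using $\omega \in L^{1,\sharp}_{\text{loc}}(\wedge^n N)$, choose a neighborhood $V'$ of $f(x)$ and an exponent $s > 1$ with $\omega|_{V'} \in L^s(\wedge^n V')$. Pick a normal neighborhood $U$ of $x$ with $U \subset f^{-1}(V')$ and set $V = fU \subset V'$. Applying Lemma \ref{qrpull-backhigherint} with $p = s$ on the pair $(U,V)$ produces an exponent $s_n > 1$ for which $f^* \colon L^s(\wedge^n V) \to L^{s_n}(\wedge^n U)$ is continuous. Since $s_n > 1$, the form $f^*\omega|_U \in L^{s_n}(\wedge^n U)$ lies in $L^{1,\sharp}(\wedge^n U)$, which proves $f^*\omega \in L^{1,\sharp}_{\text{loc}}(\wedge^n M)$.

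For the flat case, fix $x \in M$. Using $\omega \in L^{\infty,\flat}_{\text{loc}}(\wedge^0 N)$, choose a neighborhood $V'$ of $f(x)$ with $\omega|_{V'} \in L^s(\wedge^0 V')$ for every $s \in [1,\infty)$. Pick a normal neighborhood $U$ of $x$ with $U \subset f^{-1}(V')$ and set $V = fU$. For every $p \in (1,\infty)$, Lemma \ref{qrpull-backhigherint} supplies an exponent $s_0 > 1$ such that $f^* \colon L^{s_0}(\wedge^0 V) \to L^p(\wedge^0 U)$ is continuous; hence $f^*\omega|_U \in L^p(\wedge^0 U)$ for every $p \in (1,\infty)$. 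Since $U$ is precompact and so has finite volume, this also gives $f^*\omega|_U \in L^1(\wedge^0 U)$, and therefore $f^*\omega|_U \in L^{\infty,\flat}(\wedge^0 U)$.

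There is no real obstacle; the only point that needs attention is the shrinking of the neighborhood from Lemma \ref{qrpull-backhigherint} to one compatible with the local integrability of $\omega$, and as noted above this is automatic.
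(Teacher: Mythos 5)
Your argument is correct and follows essentially the same route as the paper: both deduce the claim by localizing via Lemma \ref{qrpull-backhigherint}, the only difference being that you shrink $U$ \emph{before} invoking the lemma (justified by observing that its proof works on any precompact normal neighborhood), whereas the paper takes the lemma's $U$, $V$ as given and intersects $U$ with $f^{-1}W$ afterwards. One small point to add in the sharp case: the exponent $s$ in the definition of $L^{1,\sharp}$ may equal $\infty$, so you should first pass to a precompact $V'$ and replace $s$ by a finite exponent (as the paper does with ``we may assume $p<\infty$'') before applying the lemma with $p=s$.
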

\begin{proof}
	Let $\omega \in L^{1, \sharp}_{\text{loc}}(\wedge^n N)$, let $x \in M$, and let $U$, $V$ be as in Lemma \ref{qrpull-backhigherint}. Then there exists a precompact neighborhood $W \subset V$ of $f(x)$ for which $\omega \vert_W \in L^{1, \sharp}(\wedge^n W)$, and hence there exists $p > 1$ for which $\omega \vert_W \in L^{p}(\wedge^n W)$. Let $U' = f^{-1}W \cap U$. Since $W$ is precompact, we may assume $p < \infty$. Then, by Lemma \ref{qrpull-backhigherint}, there exists $s_n > 1$ for which $f^*\omega \vert_{U'} \in L^{s_n}(\wedge^n U')$. Hence $f^*\omega \vert_{U'} \in L^{1, \sharp}(\wedge^n U')$. This completes the proof that $f^*\omega \in L^{1, \sharp}_{\text{loc}}(\wedge^n M)$.
	
	Similarly, let $\omega \in L^{\infty, \flat}_{\text{loc}}(\wedge^0 N)$, let $x \in M$, and let $U$, $V$ be as in Lemma \ref{qrpull-backhigherint}. Then there exists a precompact neighborhood $W \subset V$ of $f(x)$ for which $\omega \vert_W \in L^{\infty, \flat}(\wedge^0 W)$. Let $p \in (1, \infty)$, let $s_0$ be as in Lemma \ref{qrpull-backhigherint}, and again let $U' = f^{-1}W \cap U$. Since $\omega\vert_W \in L^{s_0}(\wedge^0 W)$, we have $f^*\omega\vert_{U'} \in L^p(\wedge^0 U')$. Note that this holds also for $p = 1$, since $U'$ is precompact due to properness of $f$. We obtain that $f^*\omega\vert_{U'} \in L^{\infty, \flat}(\wedge^0 U')$, which concludes the proof.
\end{proof}
	
\subsection{Conformal Sobolev spaces}\label{sect:confsob}
	
The \emph{weak Sobolev space of conformal exponents} $\cesob(\wedge^k M)$ is defined as 
\[
	\cesob(\wedge^k M) = \left\{\begin{array}{ll}
		W^{d, \infty, n}(\wedge^0 M), & \text{for}\ k=0, \\
		W^{d, \frac{n}{k}, \frac{n}{k+1}}(\wedge^k M), & \text{for}\ k\in \{1,\ldots, n-1\}, \\
		L^1(\wedge^n M), & \text{for}\ k = n, \\
		0, & \text{for}\ k>n.
	\end{array}\right.
\]
The local space $\cesobloc(\wedge^k M)$ is defined analogously. Since each form in $d\cesob(\wedge^k M)$ has a vanishing weak exterior derivative, we have \linebreak $d\cesob(\wedge^k M) \subset \cesob(\wedge^{k+1} M)$. Thus the sequence
\[
	\xymatrix{
		\cdots \ar[r] 
		& \cesob(\wedge^{k-1} M) \ar[r]^{d} 
		& \cesob(\wedge^{k} M) \ar[r]^{d} 
		& \cesob(\wedge^{k+1} M) \ar[r] 
		& \cdots
	}
\]
is a chain complex. We have a similar complex for $\cesobloc$.
	
The abbreviation ``OCE'' stands for \emph{original conformal exponent}. The $\cesobloc$-complex is discussed by Donaldson and Sullivan in \cite{DonaldsonSullivan1989paper}, and the $\cesob$-complex alongside its cohomology spaces by Gol'dshtein and Troyanov in \cite{GoldsteinTroyanov2010paper}. Gol'dshtein and Troyanov show that on closed manifolds, the $k$-th cohomology of the $\cesob$-complex agrees with $k$-th real singular cohomology for $k \in \{2, \ldots, n-1\}$. They also provide a counterexample that, for $k=1$, the cohomologies are not necessarily isomorphic.

We consider a modification $\cesobt(\wedge^* M)$ of the complex $\cesob(\wedge^* M)$ given by
\begin{align*}
	\cesobt(\wedge^0 M) 
		&= \left\{ \omega \in L^{\infty, \flat}(\wedge^0 M) \;\big\vert\; d\omega \in L^n(\wedge^1 M) \right\},\\
	\cesobt(\wedge^k M) 
		&= \cesob(\wedge^k M) \quad \text{for}\ k=1,\ldots, n-2, \\
	\cesobt(\wedge^{n-1} M) 
		&= \big\{ \omega \in L^{\frac{n}{n-1}}(\wedge^{n-1} M) \;\big\vert\;d\omega \in L^{1, \sharp}(\wedge^n M) \big\},\; \text{and}\\
	\cesobt(\wedge^n M) 
		&= L^{1, \sharp}(\wedge^n M).
\end{align*}
Heuristically, we flatten the $L^\infty$-space of $0$-forms and sharpen the $L^1$-space of $n$-forms. The local spaces $\cesobtloc(\wedge^{k} M)$ are defined analogously. 

We note here that in fact $\cesobtloc(\wedge^0 M) = W^{d, n}_\loc(\wedge^0 M)$. Indeed, clearly $\cesobtloc(\wedge^0 M) \subset W^{d, n}_\loc(\wedge^0 M) = W^{1, n}_\loc(M; \R)$, where $W^{1, n}_\loc(M; \R)$ is the classical first order local $n$-Sobolev space of measurable real functions. For the converse direction, let $q \in (n, \infty)$, and let $p^{-1} = q^{-1} + n^{-1}$. Since $1 < p < n$, we have $W^{1,n}_\loc(M; \R) \subset W^{1,p}_\loc(M; \R)$. Furthermore, by the classical Sobolev embedding theorem, $W^{1,p}_\loc(M; \R) \subset L^q_\loc(M)$; see e.g.\ \cite[Theorem 5.4]{Adams1975book}, which we apply using the fact that $u \colon M \to \R$ is locally $s$-Sobolev for $1 < s < \infty$ if and only if $u \circ \psi^{-1}$ is locally $s$-Sobolev for every smooth bilipschitz chart $\psi$ on $M$. We conclude that $W^{1,n}_\loc(M; \R) \subset \cesobtloc(\wedge^0 M)$. Note also that $\cesobt(\wedge^{0} M) = W^{d,n}(\wedge^0 M)$ immediately follows whenever $M$ is closed.
	
As previously, we obtain chain complexes $\cesobt(\wedge^* M)$, and $\cesobtloc(\wedge^* M)$. We denote by $\cehom{*}$ the cohomology of the $\cesobtloc$-complex, that is,
\[
	\cehom{k}(M) = \dfrac{
		\ker\left(d\colon \cesobtloc(\wedge^{k} M) \to \cesobtloc(\wedge^{k+1} M)\right)
	}{
		\im\left(d\colon \cesobtloc(\wedge^{k-1} M) \to \cesobtloc(\wedge^{k} M)\right)
	}.
\] 
On a closed manifold $M$, the $\cesobt$-complex yields the same cohomology $\cehom{*}(M)$. Using the local complex has, however, the advantage that it simplifies the sheaf-theoretic proof that $\cehom{*}$ coincides with real singular cohomology.

For $0 < k < n$, Corollary \ref{lpqcompleteness} shows that the spaces $d\cesobt(\wedge^{k-1} M)$ are complete for closed $M$, where the case $k=1$ is due to the aforementioned fact that $\cesobt(\wedge^{0} M) = W^{d,n}(\wedge^0 M)$. We record this observation as a lemma.
	
\begin{lemma}\label{confexpcompleteness}
	Let $M$ be a closed Riemannian $n$-manifold and let $0 < k < n$. Then $d\cesobt(\wedge^{k-1} M)$ is complete under the $L^{n/k}$-norm.
\end{lemma}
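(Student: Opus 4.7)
The plan is a direct case-by-case reduction to Corollary \ref{lpqcompleteness}. For each $k \in \{1, \ldots, n-1\}$, I would identify $\cesobt(\wedge^{k-1} M)$ with a partial Sobolev space $W^{d,p,q}(\wedge^{k-1} M)$ in which both exponents lie in $(1, \infty)$ and the target exponent is $q = n/k$, so that the corollary immediately yields the desired $L^{n/k}$-closedness.

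For $k = 1$, the domain $\cesobt(\wedge^0 M)$ coincides on a closed manifold with $W^{d,n}(\wedge^0 M) = W^{d,n,n}(\wedge^0 M)$ by the discussion immediately preceding the lemma. Corollary \ref{lpqcompleteness} then applies with $p = q = n$, since $n \geq 2$ and $1/q - 1/p = 0 \leq 1/n$. For $k \in \{2, \ldots, n-1\}$, the index $k-1$ lies in $\{1, \ldots, n-2\}$, so by definition
\[
	\cesobt(\wedge^{k-1} M) = \cesob(\wedge^{k-1} M) = W^{d, \frac{n}{k-1}, \frac{n}{k}}(\wedge^{k-1} M).
\]
Corollary \ref{lpqcompleteness} applies with $p = n/(k-1)$ and $q = n/k$, both in $(1,\infty)$ since $1 \leq k-1 < k \leq n-1$, and the Sobolev gap condition $1/q - 1/p = 1/n$ holds with equality. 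In both cases $dW^{d,p,q}(\wedge^{k-1} M)$ is closed in $L^{n/k}(\wedge^k M)$, which is the claim.

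There is no substantial obstacle in this argument. The only point requiring care is that, for $0 < k < n$, the domain $\cesobt(\wedge^{k-1} M)$ never involves the non-standard endpoint definitions of $\cesobt$ at the indices $n-1$ or $n$, since $k - 1 \in \{0, \ldots, n-2\}$. The boundary case $k = 1$ would otherwise appear to land outside the scope of Corollary \ref{lpqcompleteness} because of the sup-norm in the original $\cesob(\wedge^0 M)$, and this is precisely why the identification $\cesobt(\wedge^0 M) = W^{d,n}(\wedge^0 M)$ established earlier in Section \ref{sect:confcohom} is essential to the argument.
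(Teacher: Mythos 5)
Your proposal is correct and is essentially the paper's own argument: the lemma is recorded there as an immediate consequence of Corollary \ref{lpqcompleteness}, with the case $k=1$ handled exactly as you do via the identification $\cesobt(\wedge^0 M) = W^{d,n}(\wedge^0 M)$. Your exponent checks ($p=q=n$ for $k=1$, and $p=n/(k-1)$, $q=n/k$ with $1/q-1/p=1/n$ for $k\geq 2$) are all accurate.
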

	
A proper non-constant quasiregular map $f \colon M \to N$ between oriented $n$-manifolds induces a chain map $f^* \colon \cesobloc(\wedge^* N) \to \cesobloc(\wedge^* M)$ satisfying $f^* \circ d = d \circ f^*$; see \cite[Lemma 2.22]{DonaldsonSullivan1989paper} or \cite[Theorem 6.6]{GoldsteinTroyanov2010paper}, where the proofs are given for quasiconformal maps but the argument extends to the quasiregular case, or alternatively \cite[Lemma 3.6]{IwaniecMartin1993paper} or \cite[Section 15.3]{IwaniecMartin2001book}. We show that the same holds for the $\cesobtloc$-complex.
	
\begin{lemma}\label{qrchainmap}
Let $f \colon M \to N$ be a proper non-constant quasiregular mapping, where $M$ and $N$ are oriented Riemannian $n$-manifolds. Then $f$ induces a chain map $f^* \colon \cesobtloc(\wedge^* N) \to \cesobtloc(\wedge^* M)$ satisfying $f^* \circ d = d \circ f^*$, and consequently induces a linear map $f^* \colon \cehom{*}(N) \to \cehom{*}(M)$ on cohomology.
\end{lemma}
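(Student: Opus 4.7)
The plan is to reduce to the known chain-map property on the original conformal Sobolev complex $\cesobloc$, recorded just before the statement, and then patch the three boundary degrees $k=0, n-1, n$ where $\cesobtloc$ differs from $\cesobloc$. For $k\in\{1,\dots,n-2\}$ the two complexes agree and nothing is needed. Once $f^*\colon \cesobtloc(\wedge^k N) \to \cesobtloc(\wedge^k M)$ is verified together with the identity $d\circ f^* = f^*\circ d$ of weak exterior derivatives, the induced map on cohomology follows automatically, since a chain map always descends to cohomology.

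The two upper degrees are quick. For $k=n$ the mapping property is precisely the $n$-form half of Corollary \ref{qrflatandsharp}, and there is no derivative condition to check. For $k=n-1$, note the inclusion $\cesobtloc(\wedge^{n-1} N) \subset \cesobloc(\wedge^{n-1} N)$; applying the known chain map on $\cesobloc$ to any $\omega$ in the former space yields $f^*\omega \in L^{n/(n-1)}_\loc(\wedge^{n-1} M)$ together with the identity $d(f^*\omega) = f^*(d\omega)$. Since $d\omega \in L^{1,\sharp}_\loc(\wedge^n N)$ by assumption, Corollary \ref{qrflatandsharp} upgrades $f^*(d\omega)$ to $L^{1,\sharp}_\loc(\wedge^n M)$, placing $f^*\omega$ in $\cesobtloc(\wedge^{n-1} M)$.

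The case $k=0$ is the main obstacle, because here $\cesobtloc(\wedge^0 N) = W^{d,n}_\loc(\wedge^0 N)$ strictly contains $\cesobloc(\wedge^0 N) = W^{d,\infty,n}_\loc(\wedge^0 N)$, so $\omega$ may be unbounded and the chain map on $\cesobloc$ cannot be invoked directly. I would remove this unboundedness by truncation: set $\omega_j = \max(-j,\min(\omega, j))$, so that each $\omega_j$ lies in $\cesobloc(\wedge^0 N)$ with $d\omega_j = \chi_{\{|\omega|<j\}}\, d\omega$, and the existing chain map gives $f^*\omega_j \in \cesobloc(\wedge^0 M)$ with $d(f^*\omega_j) = f^*(d\omega_j)$. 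Fix a precompact normal neighborhood $U$ of a point of $M$ with $V = fU$. Since $\omega \in L^{\infty,\flat}_\loc(\wedge^0 N)$ by the Sobolev embedding noted after the definition of $\cesobt$, dominated convergence yields $\omega_j \to \omega$ in every $L^s(V)$ with $s<\infty$ and $d\omega_j \to d\omega$ in $L^n(V)$. Lemma \ref{qrpull-backhigherint} then furnishes some $p>1$ with $f^*\omega_j \to f^*\omega$ in $L^p(U)$, while the pointwise estimate \eqref{eq:pointwise} with $k=1$ together with a change of variables yields $f^*(d\omega_j) \to f^*(d\omega)$ in $L^n(U)$. Lemma \ref{weakdbysmoothlimit} now identifies $f^*(d\omega)$ as the weak exterior derivative of $f^*\omega$ on $U$, hence globally, so $d(f^*\omega) \in L^n_\loc(\wedge^1 M)$; the remaining condition $f^*\omega \in L^{\infty,\flat}_\loc(\wedge^0 M)$ is exactly Corollary \ref{qrflatandsharp}. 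The truncation-and-limit step is the only nontrivial piece; the other degrees follow directly from the tools already assembled in the paper.
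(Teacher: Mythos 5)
Your proposal is correct, and for all degrees $k\ge 1$ it coincides with the paper's argument: include $\cesobtloc(\wedge^k N)$ into $\cesobloc(\wedge^k N)$, invoke the known chain map on the $\cesobloc$-complex, and upgrade the integrability of $f^*\omega$ (and of $f^*d\omega$ in degree $n-1$) via Corollary \ref{qrflatandsharp}. The only genuine divergence is in degree $k=0$, where you correctly identify that $\cesobtloc(\wedge^0 N)=W^{d,n}_\loc$ is strictly larger than $\cesobloc(\wedge^0 N)$. The paper disposes of this case by citing the composition theorem for $W^{1,n}_\loc$-functions with quasiregular maps \cite[Theorem 14.28]{HeinonenKilpelainenMartio2006book}, which directly gives $\omega\circ f\in W^{1,n}_\loc(M)$ together with the chain rule. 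Your truncation argument --- passing to $\omega_j=\max(-j,\min(\omega,j))\in\cesobloc(\wedge^0 N)$, applying the bounded-case chain map, and recovering the identity $d(f^*\omega)=f^*(d\omega)$ in the limit via Lemma \ref{qrpull-backhigherint}, the pointwise estimate \eqref{eq:pointwise} with a change of variables on a normal neighborhood, and Lemma \ref{weakdbysmoothlimit} --- is a sound and more self-contained substitute; it essentially reproves the relevant special case of the cited theorem using only tools already assembled in the paper, at the cost of a page of extra verification. The auxiliary steps you rely on (that truncation stays in $W^{d,\infty,n}_\loc$ with $d\omega_j=\chi_{\{|\omega|<j\}}\,d\omega$, that $\omega\in L^s(V)$ for all finite $s$ on the precompact normal image $V$, and that the weak exterior derivative is a local notion so the identity globalizes) are all standard and hold as stated.
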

	
\begin{proof}
Let $\omega \in \cesobtloc(\wedge^k N)$. We consider first the case $k>0$. Since $\cesobtloc(\wedge^k N) \subset \cesobloc(\wedge^k N)$ and $f^*$ is a chain map between the $\cesobloc$-complexes, we have that $f^*\omega \in \cesobloc(\wedge^k M)$ and $df^*\omega = f^*d\omega$. Thus, by Corollary \ref{qrflatandsharp}, we have $f^*\omega \in \cesobtloc(\wedge^k M)$.
		
It remains to consider the case $k = 0$. In this case, $\omega$ is a Sobolev function and we may identify the weak exterior derivative with the weak gradient of the function. Hence, by for example \cite[Theorem 14.28.]{HeinonenKilpelainenMartio2006book}, $f^*\omega = \omega \circ f \in W^{1,n}_\text{loc}(M)$ and $df^*\omega = f^*d\omega$. Now, by Corollary \ref{qrflatandsharp}, we obtain that $f^*\omega \in \cesobtloc(\wedge^0 M)$. 
\end{proof}


\section{Equivalence of cohomologies}

In this section we show that the Sobolev--de~Rham cohomology $\cehom{*}(M)$ of an oriented Riemannian manifold $M$ is naturally isomorphic to the real singular cohomology $H^*(M;\R)$ of $M$.

\begin{thm}\label{derhamsobolevequivalence}
	For the category of oriented Riemannian manifolds and proper non-constant quasiregular mappings, there is a natural isomorphism from the singular cohomology to the Sobolev--de~Rham cohomology in the following sense: For each Riemannian manifold $M$ there exists an isomorphism $\nu_M \colon H^*(M;\R) \to \cehom{*}(M)$ having the property that, for a proper non-constant quasiregular mapping $f\colon M\to N$ between oriented Riemannian $n$-manifolds $M$ and $N$, the diagram 
	\begin{equation}\label{eq:same_cohom_maps}
		\xymatrix{
			H^*(N; \R) \ar[r]^{\nu_N^*} \ar[d]^{f^*} & \cehom{*}(N) \ar[d]^{f^*}\\
			H^*(M; \R) \ar[r]^{\nu_M^*} & \cehom{*}(M)
		}
	\end{equation}
	commutes.
\end{thm}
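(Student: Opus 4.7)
The plan is sheaf-theoretic. Define a sheaf $\ceetale{k}{M}$ on $M$ by $\ceetale{k}{M}(U) = \cesobtloc(\wedge^k U)$; this is already a sheaf since the defining integrability conditions are local. The exterior derivative gives a complex of sheaves augmented by the constant sheaf $\underline{\R}_M \hookrightarrow \ceetale{0}{M}$, since constants lie in $L^{\infty,\flat}_{\loc}$ with vanishing weak derivative. Each $\ceetale{k}{M}$ is a module over $C^\infty_M$: if $\phi$ is smooth and $\omega \in \cesobtloc(\wedge^k U)$, then $d(\phi\omega) = d\phi \wedge \omega + \phi\, d\omega$ still lies in the appropriate space, because the conformal exponents together with the flat/sharp conditions at the endpoints are stable under multiplication by smooth bounded functions with bounded derivative. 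Existence of smooth partitions of unity then makes each $\ceetale{k}{M}$ a fine sheaf, and hence acyclic for the global-section functor.

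With fineness established, the remaining analytic task is a Poincar\'e lemma: for every $x \in M$, exhibit a contractible neighborhood $U$ on which the complex $0 \to \R \to \cesobtloc(\wedge^0 U) \to \cdots \to \cesobtloc(\wedge^n U) \to 0$ is exact. A bilipschitz chart reduces the claim to a Euclidean ball, where the classical cone homotopy operator $T$ (see Iwaniec--Lutoborski \cite{IwaniecLutoborski1993paper} and Iwaniec--Scott--Stroffolini \cite{IwaniecScottStroffolini1999paper}) satisfies $dT + Td = \id$ modulo constants. The main work is to verify that $T$ sends $\cesobtloc(\wedge^k U)$ into $\cesobtloc(\wedge^{k-1} U)$ for each $k$; the interior cases $1 < k < n-1$ follow from the Sobolev--Poincar\'e estimate \eqref{eq:SP}, while the endpoint cases ($k = 1$ with flat $L^{\infty,\flat}$ target, $k = n$ with sharp $L^{1,\sharp}$ source) use the fact that $T$ gains one Sobolev step of integrability. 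Once the Poincar\'e lemma is in place, the standard spectral-sequence argument yields $\cehom{k}(M) \cong H^k(M;\underline{\R}_M) \cong H^k(M;\R)$, where the last identification is the classical isomorphism between constant-sheaf cohomology and singular cohomology on a paracompact manifold (equivalently, comparison with the fine sheaf resolution by singular cochain sheaves $\singsheaf{k}{M}$). Composing these isomorphisms defines $\nu_M$.

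For naturality, let $f \colon M \to N$ be proper non-constant quasiregular. Lemma \ref{qrchainmap} yields a morphism of sheaf complexes $f^{-1}\ceetale{*}{N} \to \ceetale{*}{M}$ commuting with $d$ and preserving the $\underline{\R}$-augmentation (constants pull back to constants). Continuity of $f$ furnishes the analogous morphism of singular cochain sheaves $f^{-1}\singsheaf{*}{N} \to \singsheaf{*}{M}$. Since both sides are fine resolutions of the common sheaf $\underline{\R}$ and both pullbacks extend the identity on $\underline{\R}$, the abstract uniqueness (up to chain homotopy) of morphisms between fine resolutions implies that the induced maps on cohomology agree, which is precisely the commutativity of diagram \eqref{eq:same_cohom_maps}. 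The main obstacle in this plan is the endpoint behaviour in the Poincar\'e lemma: the spaces $L^{\infty,\flat}$ and $L^{1,\sharp}$ have been woven into $\cesobtloc$ exactly so that the complex closes up, and verifying that the cone operator actually respects these flat and sharp spaces — as opposed to landing in the uncontrolled $L^\infty$ or $L^1$ — requires a delicate reexamination of the weighted integral formula defining $T$, with the higher integrability implicit in $L^{1,\sharp}$ supplying exactly the slack needed to hit $L^{\infty,\flat}$ after applying $T$.
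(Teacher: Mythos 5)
Your proposal is correct and follows the same overall architecture as the paper: realize $\cesobtloc(\wedge^*\cdot)$ as a fine resolution of the constant sheaf $\Rsheaf_M$ (fineness via multiplication by a smooth partition of unity), prove a Poincar\'e lemma for the conformal-exponent complex, invoke the abstract de~Rham theorem to identify $\cehom{*}(M)$ with $H^*(M;\R)$, and obtain naturality from the uniqueness (up to the canonical isomorphisms) of maps induced by $f$-cohomomorphisms of fine resolutions extending $f^*\colon \Rsheaf_N\to\Rsheaf_M$ — exactly the Bredon-style argument the paper uses. The one place you diverge is the proof of the Poincar\'e lemma. The paper mollifies the weakly closed form, applies the classical smooth Poincar\'e lemma to the mollifications, and uses the linearity of the Sobolev--Poincar\'e normalization \eqref{eq:SP} to make the primitives Cauchy in $W^{d,p,q}$; you instead apply the Iwaniec--Lutoborski cone operator $T$ directly and check its mapping properties on the conformal spaces. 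Both routes ultimately rest on the same integral estimates, and your endpoint analysis is sound: for $k=n$ the hypothesis $\omega\in L^{1,\sharp}$ gives $\omega\in L^s$ with $s>1$ locally, so $T\omega\in W^{1,s}\subset L^{n/(n-1)}$ on a ball with $dT\omega=\omega\in L^{1,\sharp}$; for $k=1$ one lands in $W^{1,n}\subset L^{\infty,\flat}_\loc$, matching the paper's observation that $\cesobtloc(\wedge^0 M)=W^{1,n}_\loc(M;\R)$. Be aware, though, that the cone-operator route does not let you skip approximation entirely: the identity $dT\omega+Td\omega=\omega$ is a priori a statement about smooth forms, and to conclude $dT\omega=\omega$ for a merely weakly closed $\omega\in L^q$ you still need to mollify, pass to the limit using the boundedness of $T$, and invoke a closure statement such as Lemma \ref{weakdbysmoothlimit} — so the mollification step has been relocated rather than removed.
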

Having this isomorphism of cohomology theories at our disposal, we may identify the linear maps $f^* \colon H^*(N;\R) \to H^*(M;\R)$ and $f^* \colon \cehom{*}(N) \to \cehom{*}(M)$, and reduce the proof of Theorem \ref{alleigenvaluessamediag} to the corresponding question on the eigenvalues of $f^*\colon \cehom{*}(M) \to \cehom{*}(M)$.

The proof of Theorem \ref{derhamsobolevequivalence} is a variant of the sheaf theoretic proof of the de Rham theorem. The key ingredient for the proof is a Poincar\'e lemma for the conformal complex $\cesobtloc(\wedge^*M)$, which follows from the Sobolev-Poincar\'e inequality \eqref{eq:SP} for Euclidean balls; see also Iwaniec--Lutoborski \cite[Proposition 4.1]{IwaniecLutoborski1993paper}.

\begin{lemma}[Poincar\'e lemma for $W^{d, p, q}$]\label{sobolevpoincare} 
	Let $n\ge 2$ and let $p, q \in (1, \infty)$ be constants for which 
	\[
		\frac{1}{q} - \frac{1}{p} \leq \frac{1}{n}.
	\]
	Let $U$ be a domain in $\R^n$, $k \in \{1, \ldots, n\}$, and let $\omega \in L^q(\wedge^k U)$ be a weakly closed form. Then, for each $y \in U$, there exists a neighborhood $V \subset U$ of $y$ and a form $\tau \in W^{d, p, q}(\wedge^{k-1} V)$ for which $d\tau = \omega\vert_V$.
\end{lemma}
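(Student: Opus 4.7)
The plan is a local argument: construct the primitive using the classical cone (Poincar\'e) homotopy operator on a ball centered at $y$, and use the $L^q \to L^p$ boundedness of this operator established in Iwaniec--Lutoborski.

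First, I fix $y \in U$ and choose $r > 0$ so that the open ball $V = B(y, r)$ satisfies $\overline{V} \subset U$. Since $V$ is star-shaped with respect to $y$, the cone homotopy operator $T$ on $V$ satisfies the identity $dT\eta + Td\eta = \eta$ for every smooth $k$-form $\eta$ on $V$ with $k \geq 1$. By Iwaniec--Lutoborski \cite[Proposition 4.1]{IwaniecLutoborski1993paper}, the operator $T$ extends to a bounded linear map
\[
	T \colon L^q(\wedge^k V) \to L^p(\wedge^{k-1} V),
\]
precisely under the hypothesis $1/q - 1/p \leq 1/n$; this is the place where the exponent condition enters essentially.

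Second, let $V' = B(y, r/2)$ and approximate $\omega$ on $V'$ by smooth forms via standard component-wise mollification in the Euclidean coordinates of $V$. This yields a sequence $\omega_i \in C^\infty(\wedge^k V')$ with $\omega_i \to \omega$ in $L^q(\wedge^k V')$. Because mollification commutes with the exterior derivative and $d\omega = 0$ in the weak sense on $V$, each $\omega_i$ is closed on $V'$. Define $\tau_i = T\omega_i \in C^\infty(\wedge^{k-1} V')$; the homotopy identity and closedness of $\omega_i$ give $d\tau_i = \omega_i$ on $V'$. The boundedness of $T$ implies that $(\tau_i)$ converges in $L^p(\wedge^{k-1} V')$ to some $\tau$, while $d\tau_i = \omega_i \to \omega$ in $L^q(\wedge^k V')$. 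Lemma \ref{weakdbysmoothlimit} then identifies $\omega$ as the weak exterior derivative of $\tau$, so $\tau \in W^{d, p, q}(\wedge^{k-1} V')$ with $d\tau = \omega|_{V'}$, and $V'$ is the required neighborhood.

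The only non-routine ingredient is the Iwaniec--Lutoborski $L^q \to L^p$ mapping property for the cone homotopy operator; this is essentially the Sobolev embedding $W^{1,q} \hookrightarrow L^p$ in the regime $1/q - 1/p \leq 1/n$, and its failure at the endpoints $p = \infty$ or $q = 1$ is the reason for the restriction $p, q \in (1, \infty)$. No complication arises at the top degree $k = n$: every measurable $n$-form is trivially weakly closed, and the same homotopy operator produces an $(n-1)$-form primitive on the ball.
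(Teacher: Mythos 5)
Your proof is correct. It differs from the paper's argument only in the mechanism used to produce a primitive with the right $L^p$ control. The paper mollifies $\omega$ on a ball exactly as you do, but then applies the classical smooth Poincar\'e lemma to get \emph{some} primitives $\tau_i$ of the $\omega_i$, and must normalize them by subtracting the closed forms $\zeta_i$ supplied by the Sobolev--Poincar\'e inequality \eqref{eq:SP}; the Cauchy property of the corrected sequence $(\tau_i - \zeta_i)$ then relies on the linear dependence of $\zeta_i$ on $\tau_i$, and the limit exists by completeness of $W^{d,p,q}$ (Lemma \ref{partialsobolevbanach}). You instead invoke the Iwaniec--Lutoborski homotopy operator $T$ directly: its $L^q \to L^p$ boundedness (their Proposition 4.1 combined with the Sobolev embedding, which is where the condition $1/q - 1/p \leq 1/n$ and the restriction $p,q \in (1,\infty)$ enter) makes the primitive canonical, so the limit is simply $T\omega$ and no normalization or linearity argument is needed. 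The two routes rest on essentially the same analytic input --- the paper's inequality \eqref{eq:SP} on a ball is itself derived from the boundedness of $T$ --- and the paper explicitly points to \cite[Proposition 4.1]{IwaniecLutoborski1993paper} as the source, so your version is a recognized and arguably cleaner variant. Two cosmetic points: the operator you apply to the $\omega_i$ should be the homotopy operator of the smaller ball $V'$ (you introduce $T$ on $V$ but then work on $V'$), and to apply Lemma \ref{weakdbysmoothlimit} you should note that $\tau_i = T\omega_i$ lies in $W^{d,p,q}(\wedge^{k-1}V')$, which follows since $\tau_i \in L^p$ by boundedness of $T$ and $d\tau_i = \omega_i \in L^q$; neither point affects the validity of the argument.
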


\begin{proof}
	Let $\omega \in L^q(\wedge^k U)$ be a $k$-form for which $d\omega = 0$. Let $B \subset \overline{B} \subset U$ be an open ball containing $y$, and let $(\eta_i)$ be a sequence of standard mollifiers. We may approximate $\omega\vert_B$ in $L^q(\wedge^k B)$ with smooth forms $\omega_i = \eta_i \ast \omega$. Now, $d\omega_i = \eta_i \ast d\omega = 0$ for each $i$.
	
	Since the forms $\omega_i$ are smooth and closed, the ordinary Poincar\'e lemma from de Rham theory yields forms $\tau_i \in C^\infty(\wedge^{k-1} B)$ satisfying $d\tau_i = \omega_i$. For each $i$, let $\tau'_i = \tau_i - \zeta_i$ where $\zeta_i$ is given by the Sobolev--Poincar\'e inequality \eqref{eq:SP} used on $\tau_i$, and note that $d\tau'_i = \omega_i$ for every $i$. But now, since $\zeta_i$ depend linearly on $\tau_i$, the sequence $(\tau'_i)$ is Cauchy in $W^{d, p, q}(\wedge^{k-1} B)$ and therefore has a limit $\tau' \in W^{d, p, q}(\wedge^{k-1} B)$ by Lemma \ref{partialsobolevbanach}. Since $d\tau' = \lim_{i \to \infty} d\tau_i' = \lim_{i \to \infty} \omega_i = \omega$, the proof is concluded.
\end{proof}

\begin{cor}[Poincar\'e lemma for $\cesobt$]\label{cesobolevpoincare}
	Let $U$ be a domain in $\R^n$, $n \geq 2$. Let $k \in \{1, \ldots, n\}$, and let $\omega \in \cesobt(\wedge^k U)$ be weakly closed. Then, for each $y \in U$, there exists a neighborhood $V \subset U$ of $y$ and a form $\tau \in \cesobt(\wedge^{k-1} V)$ for which $d\tau = \omega\vert_V$.
\end{cor}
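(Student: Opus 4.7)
The plan is to reduce the statement to Lemma \ref{sobolevpoincare} by matching the definition of $\cesobt$ to the scale $W^{d,p,q}$, handling three regimes: a generic middle range $k \in \{2,\ldots,n-1\}$, the top case $k=n$, and the bottom case $k=1$. In each regime I would shrink $V$ to a ball around $y$ small enough that $\omega\vert_V$ lies in an honest $L^s$ space, then choose the exponents so that the hypothesis $1/q-1/p \leq 1/n$ of Lemma \ref{sobolevpoincare} is satisfied.

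For $k \in \{2,\ldots,n-1\}$, the form $\omega \in \cesobt(\wedge^k V)$ satisfies $\omega \in L^{n/k}(V)$. Applying Lemma \ref{sobolevpoincare} with $q=n/k$ and $p=n/(k-1)$ (both in $(1,\infty)$, with $1/q-1/p = 1/n$) produces $\tau \in W^{d,n/(k-1),n/k}(V)$ with $d\tau = \omega\vert_V$. Since $k-1 \in \{1,\ldots,n-2\}$, this space coincides with $\cesobt(\wedge^{k-1}V)$ by definition. For $k=n$, closedness is automatic and $\omega \in L^{1,\sharp}_\loc$ gives $\omega\vert_V \in L^s(V)$ for some $s>1$ after shrinking $V$. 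Applying Lemma \ref{sobolevpoincare} with $q=s$ and $p=n/(n-1)$ (the inequality $1/s-(n-1)/n \leq 1/n$ reduces to $s \geq 1$) yields $\tau \in L^{n/(n-1)}(V)$ with $d\tau = \omega \in L^s \subset L^{1,\sharp}(V)$, so $\tau \in \cesobt(\wedge^{n-1}V)$.

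The bottom case $k=1$ is the delicate one, because the target $\cesobt(\wedge^0 V)$ requires $\tau \in L^{\infty,\flat}(V)$, a condition strictly stronger than any single $L^p$ conclusion of Lemma \ref{sobolevpoincare}. The approach is first to apply Lemma \ref{sobolevpoincare} with $q=n$ (since $\omega \in L^n$) and some $p \in (n,\infty)$; the condition $1/n-1/p \leq 1/n$ is automatic, giving $\tau \in W^{d,p,n}(V)$ with $d\tau = \omega\vert_V$. Taking $V$ to be a ball of finite measure, $p>n$ yields $\tau \in L^n(V)$, hence $\tau \in W^{1,n}(V)$. Now the classical Sobolev embedding $W^{1,n}(V) \hookrightarrow L^s(V)$ for every $s<\infty$, exactly the argument used in Section \ref{sect:confsob} to identify $\cesobtloc(\wedge^0 M)$ with $W^{d,n}_\loc$, upgrades this to $\tau \in L^{\infty,\flat}(V)$. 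Combined with $d\tau = \omega \in L^n(V)$, this puts $\tau$ in $\cesobt(\wedge^0 V)$.

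The main obstacle is precisely this last upgrade in the $k=1$ case; the middle and top cases are essentially bookkeeping with exponents, while the bottom case requires invoking the Sobolev embedding into arbitrarily high $L^s$ in order to reach the flat $L^\infty$ space. Note that in all cases only the Sobolev--Poincar\'e input (Lemma \ref{sobolevpoincare}) together with the already-established local identification $\cesobtloc(\wedge^0)=W^{d,n}_\loc$ is used, so the result follows purely from the Euclidean theory on balls.
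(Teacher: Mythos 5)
Your proposal is correct and follows essentially the same route as the paper, which simply states that the corollary follows immediately from Lemma \ref{sobolevpoincare}, with the case $k=1$ handled via the identification $\cesobt(\wedge^0) = W^{d,n}(\wedge^0)$ established in Section \ref{sect:confsob}. Your write-up merely makes explicit the exponent bookkeeping and the Sobolev-embedding upgrade for $k=1$ that the paper leaves implicit.
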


\begin{proof}
	The claim follows immediately from Lemma \ref{sobolevpoincare}, where the case $k = 1$ is due to the fact that $\cesobt(\wedge^0 M) = W^{d, n}(\wedge^0 M)$.
\end{proof}

With Corollary \ref{cesobolevpoincare}, the proof of Theorem \ref{derhamsobolevequivalence} is for the most part straightforward for a reader familiar with sheaf theory. We nonetheless present a more detailed outline of the proof for the reader's convenience. Our presentation is based on Wells \cite[Chapter II]{Wells1980book} and Warner \cite[Chapter 5]{Warner1983book}. For the naturality of the induced homomorphism, our reference is Bredon \cite[Section II.8]{Bredon1997book}.

\subsection{Notation and terminology}

\subsubsection{The sheaves $\cesheaf{*}{\cdot}$}

We restrict our discussion to the particular case of sheaves of vector spaces, and refer to e.g.\ \cite{Bredon1997book} and \cite[Chapter II]{Wells1980book} for more general expositions on sheaf theory.

Let $M$ be an oriented Riemannian $n$-manifold. The presheaves $\cesheaf{*}{M}$ are defined by 
\[
	\cesheaf{*}{M} = \left\{ 
		\begin{array}{rcl} 
			U &\mapsto& \cesobtloc(\wedge^* U)\\ 
			i_{U, V} \colon U \hookrightarrow V 
				&\mapsto& i_{U, V}^* \colon \cesobtloc(\wedge^* V) \to \cesobtloc(\wedge^* U) 
		\end{array} \right\}, 
\]
where $i_{U, V}^*$ is the pullback map induced by the inclusion map $i_{U, V} \colon U \hookrightarrow V$. Recall that, more generally, a \emph{presheaf $\sheaf$ on $M$} is a contravariant functor from the category of open subsets of $M$ and inclusion maps to the category of vector spaces, that is, $\sheaf$ assigns to an open set $U \subset M$ a vector space $\sheaf(U)$, and to every inclusion $i_{U, V} \colon U \hookrightarrow V$ of open sets in $M$ a linear \emph{restriction homomorphism} $\sheaf(i_{U, V}) \colon \sheaf(V) \to \sheaf(U)$.

A presheaf $\sheaf$ on $M$ is called a \emph{sheaf} if, for every collection $\mathcal{U}$ of open subsets of $M$ with union $U_{\mathcal{U}} \subset M$, the following conditions are satisfied:
\begin{enumerate}
	\item\label{sheafcondition1} If for $v, w \in \sheaf(U_{\mathcal{U}} )$ the restrictions $\sheaf(i_{V, U_{\mathcal{U}} })v$ and $\sheaf(i_{V, U_{\mathcal{U}} })w$ agree for every $V \in \mathcal{U}$, then $v = w$.
	\item\label{sheafcondition2} If $v_{V} \in \sheaf(V )$ for every $V \in \mathcal{U}$ and the restrictions $\sheaf(i_{V \cap W, V})v_{V}$ and $\sheaf(i_{V \cap W, W})v_{W}$ agree whenever $V, W \in \mathcal{U}$ and $V \cap W \neq \emptyset$, then there exists $v \in \sheaf(U_{\mathcal{U}} )$ for which $\sheaf(i_{V, U_{\mathcal{U}}})v = v_{V}$ for every $V \in \mathcal{U}$.
\end{enumerate}
It is easily seen that the presheaves $\cesheaf{*}{M}$ are sheaves. Note here the crucial subtlety that we defined $\cesheaf{*}{M}$ using local spaces $\cesobtloc(\wedge^* U)$ instead of global ones $\cesobt(\wedge^* U)$: were $\cesheaf{*}{M}$ defined using $\cesobt(\wedge^* U)$ instead, the collated element $v$ of condition \eqref{sheafcondition2} would not necessarily satisfy the required global integrability for infinite collections $\mathcal{U}$.

\subsubsection{\'Etal\'e spaces and generated sheaves}

Let $\ceetale{*}{M}$ denote the associated \'etal\'e spaces of the sheaves $\cesheaf{*}{M}$. Recall that a presheaf $\sheaf$ over $M$ has an associated \emph{\'etal\'e space} $\Fetale$, which is a topological space together with a local homeomorphism $\pi \colon \Fetale \to M$, for which $\pi^{-1}\{x\}$ is a vector space for every $x \in M$  and the maps
\begin{align*}
	(f_1, f_2) &\mapsto f_1 - f_2, && (f_1, f_2) \in \{(f, g) \in \Fetale \times \Fetale \colon  \pi(f) = \pi(g)\},\\
	f_1 &\mapsto kf_1, && f_1 \in \Fetale, k \in \R. 
\end{align*}
are continuous.

The construction of the associated \'etal\'e spaces $\ceetale{*}{M}$ is by considering spaces of germs. Given an open set $U \subset M$, $\omega\in \cesobtloc(\wedge^* U)$, and $x\in U$, the \emph{germ $[\omega]_x$ of $\omega$ at $x$} is the equivalence class of forms $\omega' \in \cesobtloc(\wedge^* V)$, where $V$ is a neighborhood of $x$, for which $\omega|_W = \omega'|_W$ almost everywhere in a neighborhood $W \subset U \cap V$ of $x$. 

The \emph{stalk $(\cesheaf{*}{M})_x$ of $\cesheaf{*}{M}$ over $x\in M$} is the vector space of germs of $\cesheaf{*}{M}$ at $x$. The associated \'etal\'e space $\ceetale{*}{M}$ is then defined as a union of all stalks of $\cesheaf{*}{M}$, with topology generated by the sets $\{[\omega]_x \colon x\in U\}$ where $U\subset M$ is open and $\omega\in \cesobtloc(\wedge^* U)$. 

For each open set $U\subset M$, we also denote by $\Gamma(U, \ceetale{*}{M})$ the space of sections of the \'etal\'e space $\ceetale{*}{M}$ over $U$, that is, the vector space of continuous mappings $s \colon U \to \ceetale{*}{M}$ satisfying $\pi \circ s = \id_U$. The collection of vector spaces $\Gamma(U, \ceetale{*}{M})$ for each open set $U \subset M$, together with the natural restriction maps $\Gamma(V, \ceetale{*}{M}) \to \Gamma(U, \ceetale{*}{M})$, is a sheaf on $M$, called the \emph{sheaf of sections of the \'etal\'e space $\ceetale{*}{M}$} or alternatively the \emph{generated sheaf of the presheaf $\cesheaf{*}{M}$}. We denote by $\Gamma(\ceetale{*}{M})$ the sheaf of sections of $\ceetale{*}{M}$.

A \emph{presheaf homomorphism $\varphi\colon \sheaf \to \sheaf'$} between presheaves $\sheaf$ and $\sheaf'$ over the same space $M$ is a natural transformation from $\sheaf$ to $\sheaf'$, that is a collection 
\[
	\left\{\varphi_{U} \colon \sheaf(U) \to \sheaf'(U) \colon U \subset M \text{ is open}\right\}
\]
of linear maps, which commute with the restriction homomorphisms. Note that a presheaf homomorphism $\varphi \colon \sheaf \to \sheaf'$ induces for every $x \in M$ a linear map $\varphi_x \colon \sheaf_x \to \sheaf_x'$ between stalks over $x$. If every map $\varphi_U$ of a presheaf homomorphism $\varphi \colon \sheaf \to \sheaf'$ is bijective, then $\varphi$ is a \emph{presheaf isomorphism}. The terms \emph{sheaf homomorphism} and \emph{sheaf isomorphism} are also used when the domain and target presheaves are sheaves.

Since $\cesheaf{*}{M}$ is a sheaf, it is naturally isomorphic to its generated sheaf $\Gamma(\ceetale{*}{M})$; see \cite[Theorem II.2.2]{Wells1980book} or \cite[Proposition 5.8]{Warner1983book}. The explicit sheaf isomorphism is given by
\begin{align*}
	\varphi_{*} \colon \cesheaf{*}{M} \to \Gamma(\ceetale{*}{M}), \quad (\varphi_{*})_U(\omega) = (x\mapsto [\omega]_x),
\end{align*}
where $U \subset M$ is open and $\omega \in \cesobtloc(\wedge^* U)$.
\begin{rem}
	In some sources such as \cite{Warner1983book} and \cite{Bredon1997book}, the term \emph{complete presheaf} is used for sheaves instead, while the term \emph{sheaf} is used for the \'etal\'e spaces of presheaves. We follow here the terminology used in Wells \cite{Wells1980book}.
\end{rem}

\subsection{Fine resolution of the constant sheaf}

A sheaf $\sheaf$ over $M$ is \emph{fine} if every locally finite open cover $\{ U_i \colon i \in I\}$ of $M$ has a \emph{subordinate partition of unity}, that is, a collection 
\[
	\{ \lambda_i \colon \sheaf \to \sheaf : i \in I,\ \spt \lambda_i \subset U_i\}
\] 
of sheaf homomorphisms satisfying $\sum_{i\in I}\lambda_i = \id_{\sheaf}$. Note that the identity morphism $\id_{\sheaf}$ is defined by setting $(\id_{\sheaf})_U = \id_{\sheaf(U)}$ for every open $U$, and the support $\spt \lambda_i$ is the collection of points $x \in M$ that do not have a neighborhood $U$ where $(\lambda_i)_U$ is the zero map. 

Note that the infinite sum $\sum_{i\in I}\lambda_i$ is a sheaf homomorphism by local finiteness of the family of supports of $\lambda_i$. Indeed, for every $x \in M$, any sufficiently small neighborhood $U$ meets the support of only finitely many $\lambda_i$ due to local finiteness, and for all other $\lambda_i$ we have $(\lambda_i)_U = 0$. This yields a well-defined $(\sum_{i\in I}\lambda_i)_U$ for small enough open $U \subset M$, which can be extended to all open $U \subset M$ by taking unions of small $U$ and using conditions \eqref{sheafcondition1} and \eqref{sheafcondition2} in the definition of a sheaf.

We denote by $\Rsheaf_M$ the constant sheaf on $M$, which maps every open $U \subset M$ to the space of locally constant functions on $U$, and every inclusion $i_{U, V} \colon U \to V$ to the usual restriction map of functions. Note that the functions of $\Rsheaf_M(U)$ are constant on the components of $U$, and every stalk of $\Rsheaf_M$ is naturally isomorphic to $\R$. We obtain a natural inclusion sheaf homomorphism $i \colon \Rsheaf_M \to \cesheaf{0}{M}$. Furthermore, the weak exterior derivative $d$ induces a sheaf homomorphism $d \colon \cesheaf{k}{M} \to \cesheaf{k+1}{M}$ for every $k \in \N$. In what follows, we show that the sheaves $\cesheaf{*}{M}$ together with the sheaf homomorphisms $d$ form a \emph{fine resolution} of the constant sheaf $\Rsheaf_M$.

\begin{prop}\label{prop:finetorsionlessresolution}
	The sequence 
	\begin{equation*}\label{eq:sheafseq}
		\xymatrix{
			0 \ar[r]
			&\Rsheaf_M \ar[r]^-{i} 
			&\cesheaf{0}{M} \ar[r]^-{d}
			&\cesheaf{1}{M} \ar[r]^-{d}
			&\cdots
		}
	\end{equation*}
	of sheaves is a fine resolution of $\Rsheaf_M$, that is, the spaces $\cesheaf{*}{M}$ are fine and for every $x \in M$ the induced sequence
	\begin{equation}\label{eq:stalkseq}
		\xymatrix{
			0 \ar[r] 
			& \R \ar[r]^-{i_x} 
			&(\cesheaf{0}{M})_x \ar[r]^-{d_x} 
			& (\cesheaf{1}{M})_x \ar[r]^-{d_x} 
			&\cdots
		}
	\end{equation}
	on stalks is exact.
\end{prop}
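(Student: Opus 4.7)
My plan is to establish the two assertions of the proposition separately: fineness of each $\cesheaf{k}{M}$, and exactness of the stalk sequence \eqref{eq:stalkseq}.

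\textbf{Fineness.} Given a locally finite open cover $\{U_i\}_{i \in I}$ of $M$, I would take a smooth partition of unity $\{\rho_i\}_{i \in I}$ subordinate to $\{U_i\}$ on the underlying manifold $M$, and then define each sheaf homomorphism $\lambda_i \colon \cesheaf{k}{M} \to \cesheaf{k}{M}$ by $(\lambda_i)_V(\omega) = \rho_i \omega$ for every open $V \subset M$ and $\omega \in \cesobtloc(\wedge^k V)$. The point to check is that multiplication by a bounded smooth function with bounded derivative preserves the local spaces $\cesobtloc(\wedge^k V)$: for $1 \le k \le n-1$ this follows from the Leibniz rule $d(\rho_i \omega) = d\rho_i \wedge \omega + \rho_i \, d\omega$ together with the inclusion $L^{n/k}_\loc \subset L^{n/(k+1)}_\loc$ for integrand control locally; for $k = 0$ one uses $\cesobtloc(\wedge^0 M) = W^{d,n}_\loc(\wedge^0 M)$ shown in Section \ref{sect:confsob}; and for $k \in \{n-1,n\}$ the flat/sharp conditions are preserved because multiplication by a bounded function maps any $L^s_\loc$ to itself. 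The support condition $\spt \lambda_i \subset U_i$ is clear, and $\sum_i \lambda_i = \id$ holds locally by local finiteness of $\{\rho_i\}$ and therefore extends to a sheaf identity by the sheaf axioms.

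\textbf{Exactness of \eqref{eq:stalkseq}.} Injectivity of $i_x$ is immediate: if a locally constant function has germ zero at $x$, then it vanishes on some neighborhood of $x$ and, being locally constant, equals zero there as a section. For exactness at $(\cesheaf{0}{M})_x$, a germ $[\omega]_x \in (\cesheaf{0}{M})_x$ with $d_x[\omega]_x = 0$ is represented by a Sobolev function on a neighborhood of $x$ with vanishing weak exterior derivative; identifying the weak exterior derivative with the weak gradient (as in the proof of Lemma \ref{qrchainmap}), such a function is constant on connected components, hence lies in the image of $i_x$.

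\textbf{Exactness at $(\cesheaf{k}{M})_x$ for $k \ge 1$.} This is where the Poincar\'e lemma does the work. Let $[\omega]_x \in (\cesheaf{k}{M})_x$ be represented by a weakly closed form $\omega \in \cesobtloc(\wedge^k W)$ on some open neighborhood $W$ of $x$. Choose a smooth bilipschitz chart $\phi \colon W' \to \R^n$ around $x$ with $W' \subset W$; since bilipschitz maps preserve all the integrability conditions defining $\cesobtloc$ and commute with the weak exterior derivative, the push-forward $(\phi^{-1})^* \omega$ lies in $\cesobt(\wedge^k \phi W')$ and is weakly closed there. Corollary \ref{cesobolevpoincare} then furnishes a neighborhood $V \subset \phi W'$ of $\phi(x)$ and a primitive $\tau \in \cesobt(\wedge^{k-1} V)$ with $d\tau = (\phi^{-1})^*\omega |_V$; pulling $\tau$ back by $\phi$ yields an element of $\cesobtloc(\wedge^{k-1} \phi^{-1} V)$ whose weak differential equals $\omega$ on a neighborhood of $x$. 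Hence $[\omega]_x$ lies in the image of $d_x$.

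The main obstacle is the chart-transport step for the Poincar\'e lemma together with verifying that bilipschitz maps truly preserve the flat and sharp $L^p$-conditions that enter the definition of $\cesobt$ at the endpoints $k = 0$ and $k = n$. Once this routine verification is in hand, the remaining exactness at $(\cesheaf{0}{M})_x$ and injectivity of $i_x$ are entirely formal, and fineness reduces to the Leibniz rule applied to a smooth partition of unity.
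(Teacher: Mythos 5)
Your proposal is correct and follows essentially the same route as the paper: fineness via multiplication by a smooth partition of unity, exactness at degree $k\ge 1$ via the Poincar\'e lemma of Corollary \ref{cesobolevpoincare}, and exactness at degree $0$ via the identification $\cesobtloc(\wedge^0 U) = W^{1,n}_\loc(U;\R)$ and the constancy of Sobolev functions with vanishing weak gradient. The only difference is that you make explicit the transport of the Euclidean Poincar\'e lemma to the manifold through a bilipschitz chart and the Leibniz-rule check for fineness, both of which the paper leaves implicit; these verifications are carried out correctly.
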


\begin{proof}
	Let $k \in \N$, let $\mathcal{U} = \{U_i\}$ be a locally finite open cover of $M$, and let $\{\phi_i\}$ be a smooth partition of unity subordinate to $\mathcal{U}$. For each index $i$, let $\lambda_{i} \colon \cesheaf{k}{M} \to \cesheaf{k}{M}$ be the sheaf endomorphism given by $\omega \mapsto (\phi_i \vert_U) \omega$ for $\omega \in \cesobtloc(\wedge^k U)$. We obtain a partition of unity for $\cesheaf{k}{M}$ subordinate to $\mathcal{U}$. Hence the sheaf $\cesheaf{k}{M}$ is fine.
		
	It remains to verify the exactness of \eqref{eq:stalkseq} for every $x \in M$. Exactness at $\R$ follows since the maps $i_x$ are injective. Exactness at $(\cesheaf{k}{M})_x$ for $k > 0$ follows from the version of the Poincar\'e lemma in Corollary \ref{cesobolevpoincare}, which yields for every open nonempty $U \subset M$ that a locally closed $k$-form $\omega \in \cesobt(\wedge^k U)$ is locally exact. 

	For the remaining case $k=0$, let $x \in M$, $U$ a neighborhood of $x$, and let $u \in \cesobtloc(\wedge^0 U)$ be closed. Since $\cesobtloc(\wedge^0 U) = W^{1,n}_\loc(U; \R)$, there is a connected neighborhood $V \subset U$ of $x$ for which $u \vert_V \in W^{1, n}(V; \R)$. By \cite[Lemma 1.16]{HeinonenKilpelainenMartio2006book}, the restriction $u \vert_V$ is constant. Thus, a locally closed $0$-form in $\cesobtloc(\wedge^0 U)$ is locally constant, implying the exactness of \eqref{eq:stalkseq} at $(\cesheaf{0}{M})_x$.
\end{proof}

\subsection{Sheaf cohomology and the proof of Theorem \ref{derhamsobolevequivalence}}

Let $M$ be an oriented Riemannian manifold and $\Rsheaf_M$ the constant sheaf on $M$. The conformal sheaf cohomology $\cehom{*}(M;\Rsheaf_M)$ with coefficients in $\Rsheaf_M$ is the cohomology of the induced chain complex of vector spaces 
\begin{equation*}\label{eq:sheaf_cohomology}
	\xymatrix{
		0 \ar[r] 
		& \Gamma(M, \ceetale{0}{M}) \ar[r]^-{\Gamma(d)} 
		& \Gamma(M, \ceetale{1}{M}) \ar[r]^-{\Gamma(d)} 
		& \cdots,
	}
\end{equation*}
where the maps $\Gamma(d)$ are induced by the sheaf homomorphisms $d \colon \cesheaf{k}{M} \to \cesheaf{k+1}{M}$ and the linear isomorphisms $(\varphi_k)_M \colon \cesheaf{k}{M}(M) \to \Gamma(M, \ceetale{0}{M})$. We refer to Wells \cite[Theorem II.3.11]{Wells1980book} or Warner \cite[Sections 5.20-5.23]{Warner1983book} for more details and more general treatment of cohomologies $H^*(M;\sheaf)$ of $M$ having coefficients in a sheaf $\sheaf$.

For the proof of the naturality part of Theorem \ref{derhamsobolevequivalence}, we recall cohomomorphisms of sheaves and resolutions. For a more detailed treatment, we refer to Bredon \cite[Sections I.4, II.8]{Bredon1997book}.

Let $f\colon M\to N$ be a proper quasiregular mapping between oriented Riemannian $n$-manifolds $M$ and $N$. Then, for every $k \in \N$, the pull-back $f^* \colon \cesobtloc(\wedge^k N) \to \cesobtloc(\wedge^k M)$ of Sobolev forms induces a pull-back \emph{$f$-cohomomorphism} $f^* \colon \cesheaf{k}{N} \to \cesheaf{k}{M}$ of sheaves, that is, a collection of linear maps
\[
	\left\{ f^*_U \colon \cesobtloc(\wedge^k U) \to \cesobtloc(\wedge^k f^{-1}U) \colon U \subset N\ \text{open} \right\}
\]
satisfying 
\[
	f^*_U \circ i^*_{U, V} = i^*_{f^{-1}U, f^{-1}V} \circ f^*_V
\]
for all open $U \subset V \subset N$, where $i_{U, V}$ and $i_{f^{-1}U, f^{-1}V}$ are the inclusion maps $U \hookrightarrow V$ and $f^{-1}U \hookrightarrow f^{-1}V$, respectively. 
	
Let $\Rsheaf_M$ and $\Rsheaf_N$ denote the constant sheaves induced by $\R$ on $M$ and $N$ respectively. Since $f$ is continuous, it also induces a $f$-cohomomorphism $f^* \colon \Rsheaf_N \to \Rsheaf_M$ where every linear map $f^*_U \colon \Rsheaf_N(U) \to  \Rsheaf_M(f^{-1}U) $ is given by precomposition of functions.
\begin{lemma}\label{cohom_of_resolutions_lemma}
	The pull-back cohomomorphisms $f^* \colon \cesheaf{k}{N} \to \cesheaf{k}{M}$ form a $f$-cohomomorphism of resolutions extending $f^* \colon \Rsheaf_N \to \Rsheaf_M$, that is, the diagram
	\begin{equation}\label{eq:cohom_of_resolutions}
	\xymatrix{
		0 \ar[r] 
			&\Rsheaf_N \ar[r]^{i} \ar[d]^{f^*} 
			&\cesheaf{0}{N} \ar[r]^{d} \ar[d]^{f^*} 
			&\cesheaf{1}{N} \ar[r]^{d} \ar[d]^{f^*} 
			&\cdots\\
		0 \ar[r] 
			&\Rsheaf_M \ar[r]^{i} 
			&\cesheaf{0}{M} \ar[r]^{d} 
			&\cesheaf{1}{M} \ar[r]^{d} 
			&\cdots
	}
	\end{equation}
	commutes.
\end{lemma}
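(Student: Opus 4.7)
The plan is to verify commutativity of \eqref{eq:cohom_of_resolutions} square by square, which reduces the lemma to facts already established in the paper. The diagram consists of an initial square involving the inclusion of constants and a sequence of squares involving the exterior derivative, each of which must be checked at the level of every open $U \subset N$.

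First, I would handle the differential squares. Fix $k \in \N$ and an open $U \subset N$. The required identity at this stage is
\[
	f^*_U \circ d_U = d_{f^{-1}U} \circ f^*_U
\]
as maps $\cesobtloc(\wedge^k U) \to \cesobtloc(\wedge^{k+1} f^{-1}U)$. Since $f^{-1}U$ is open in $M$ and the restriction $f|_{f^{-1}U} \colon f^{-1}U \to U$ is a proper non-constant quasiregular map between oriented Riemannian $n$-manifolds, this is precisely the content of Lemma \ref{qrchainmap} applied to this restriction. Together with the already-established fact that $f^*$ is compatible with restriction to smaller open sets (built into the definition of the cohomomorphism), this yields a commuting square of sheaves.

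Next, for the leftmost square, I would observe that $i \colon \Rsheaf \to \cesheaf{0}{\cdot}$ is merely the inclusion of locally constant functions as elements of $\cesobtloc(\wedge^0 \cdot) = W^{1,n}_\loc(\cdot;\R)$. For a locally constant $\omega \in \Rsheaf_N(U)$, both $i_{f^{-1}U}(f^*_U \omega)$ and $f^*_U(i_U \omega)$ are equal to the composition $\omega \circ f$ regarded as a $0$-form on $f^{-1}U$; the former uses that continuity of $f$ sends locally constant functions to locally constant functions, while the latter uses the explicit formula for the pull-back of $0$-forms. Commutativity at this stage is thus tautological. Exactness of each row at $\Rsheaf$ in the resolution sense is not part of this lemma—that was Proposition \ref{prop:finetorsionlessresolution}.

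I do not anticipate a serious obstacle: the hardest ingredient, the $d$-compatibility of $f^*$ on the conformal Sobolev complex, is already packaged in Lemma \ref{qrchainmap}, and the only bookkeeping is checking that $f^*_V \circ i^*_{U,V} = i^*_{f^{-1}U, f^{-1}V} \circ f^*_U$, which follows because the pull-back by $f$ is defined pointwise (or rather almost-everywhere) and hence commutes with the restriction of forms to open subsets. If anything requires care, it is the verification at $k = n-1$ and at $k = 0$, since at those endpoints the spaces $\cesobtloc(\wedge^k \cdot)$ involve the flat and sharp $L^p$-variants; however, Corollary \ref{qrflatandsharp} has already certified that $f^*$ preserves these spaces, so no new estimate is needed.
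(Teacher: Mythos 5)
Your proposal is correct and takes essentially the same route as the paper: the leftmost square commutes because $f^*$ on $0$-forms is precomposition with $f$, and the remaining squares commute by Lemma \ref{qrchainmap}. The additional bookkeeping you note (compatibility with restrictions, the flat/sharp endpoint cases via Corollary \ref{qrflatandsharp}) is already packaged in the cited lemmas and the definition of the cohomomorphism, so nothing further is needed.
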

\begin{proof}
	Let $U \subset N$ be open. Since the map $f^* \colon \cesheaf{0}{N} \to \cesheaf{0}{M}$ is given by precomposition with $f$, the leftmost square of \eqref{eq:cohom_of_resolutions} commutes. The remaining squares commute due to Lemma \ref{qrchainmap}.
\end{proof}
	
We are now ready to recall the proof of the de Rham theorem in this context.

\begin{proof}[Proof of Theorem \ref{derhamsobolevequivalence}]
	For every $k \in \N$, let $\varphi_{k}$ be the natural presheaf isomorphism $\cesheaf{k}{M} \to \Gamma(\ceetale{k}{M})$. By definition of $\Gamma(d)$, the diagram 
	\begin{equation}\label{eq:cohom_eq_diagram}\begin{split}
		\xymatrixcolsep{3pc}\xymatrix{
			0 \ar[r]
				& \cesobtloc(\wedge^0 M) \ar[d]^-{(\varphi_{0})_M} \ar[r]^-{d}
				& \cesobtloc(\wedge^1 M) \ar[d]^-{(\varphi_{1})_M} \ar[r]^-{d}
				& \cdots\\
			0 \ar[r]
				& \Gamma(M, \ceetale{0}{M}) \ar[r]^-{\Gamma(d)}
				& \Gamma(M, \ceetale{1}{M}) \ar[r]^-{\Gamma(d)}
				& \cdots
		}
	\end{split}\end{equation}
	commutes. The upper complex in diagram \eqref{eq:cohom_eq_diagram} yields cohomology $\cehom{*}(M)$ and the lower complex the sheaf cohomology $\cehom{*}(M; \Rsheaf_M)$. Since $(\varphi_*)_M$ is a chain isomorphism, it induces a canonical isomorphism $\cehom{*}(M) \to \cehom{*}(M; \Rsheaf_M)$.
	
	The claim that $\cehom{*}(M)$ is canonically isomorphic to $H^*(M; \R)$ follows from the fact that all sheaf cohomologies derived from a fine resolution of $\Rsheaf_M$ by sheaves of vector spaces are canonically isomorphic, see Warner \cite[Sections 5.20-5.23]{Warner1983book} or Wells \cite[Theorem II.3.13 and Corollary II.3.14]{Wells1980book}. Indeed, the classical \emph{singular resolution} $\singsheaf{*}{M}$ with real coefficients is a fine resolution of $\Rsheaf_M$ and yields sheaf cohomology $H^*(M; \Rsheaf_M)$ canonically isomoprhic to $H^*(M; \R)$. We refer to Warner \cite[Sections 5.31-5.32]{Warner1983book} for details: note that the treatment is slightly more involved since the presheaf of singular cochains is not a sheaf. Now, the chain of canonical isomorphisms
	\[
		H^*(M; \R) \cong H^*(M; \Rsheaf_M) \cong \cehom{*}(M; \Rsheaf_M) \cong \cehom{*}(M)
	\]
	completes the first part of the proof.
	
	For the second part, let $f \colon M \to N$ be a proper and non-constant quasiregular map between oriented $n$-manifolds $M$ and $N$. By the natural isomorphisms $\cesheaf{*}{M} \to \Gamma(\ceetale{*}{M})$ and $\cesheaf{*}{N} \to \Gamma(\ceetale{*}{N})$, the pull-back cohomomorphisms $f^* \colon \cesheaf{k}{N} \to \cesheaf{k}{M}$ induce linear maps $\Gamma(f^*) \colon \Gamma(N, \ceetale{*}{N}) \to \Gamma(M, \ceetale{*}{M}) $. As consequence of Lemma \ref{cohom_of_resolutions_lemma}, $\Gamma(f^*)$ induces a linear map $f^* \colon \cehom{*}(N; \Rsheaf_N) \to \cehom{*}(M; \Rsheaf_M)$ which corresponds to the standard pull-back under the isomorphisms $\cehom{*}(M) \to \cehom{*}(M; \Rsheaf_M)$ and $\cehom{*}(N) \to \cehom{*}(N; \Rsheaf_N)$. 
	
	By continuity of $f$, similar pull-back cohomomorphisms $f^* \colon \singsheaf{*}{N} \to \singsheaf{*}{M}$ extending $f^* \colon \Rsheaf_N \to \Rsheaf_M$ are induced on the singular resolutions. This again induces a pull-back map $f^* \colon H^*(N; \Rsheaf_N) \to H^*(M; \Rsheaf_M)$ on singular sheaf cohomology which corresponds to the standard pull-back map $f^* \colon H^*(N; \R) \to H^*(M; \R)$. Finally, since the maps $f^* \colon H^*(N; \Rsheaf_N) \to H^*(M; \Rsheaf_M)$ and $f^* \colon \cehom{*}(N; \Rsheaf_N) \to \cehom{*}(M; \Rsheaf_M)$ both arise from a cohomomorphism of fine resolutions extending $f^* \colon \Rsheaf_N \to \Rsheaf_M$, they agree up to the canonical isomorphisms; see the discussion in Bredon \cite[Section II.8.1]{Bredon1997book} for details. This completes the proof.
\end{proof}


\section{Quasiregular push-forward}

In this section we discuss the push-forward operator $f_*$ on measurable differential forms induced by a quasiregular map $f \colon M\to N$ between closed, connected, oriented Riemannian manifolds. We refer to Heinonen--Kilpel\"ai\-nen--Martio \cite[pp. 263-268]{HeinonenKilpelainenMartio2006book} for the case of $0$-forms, i.e., measurable functions.

In order to define the quasiregular push-forward, we first recall a Vitali-type covering theorem on manifolds.
\begin{lemma}\label{vitalicoveringthm}
	Let $M$ be a closed Riemannian manifold, let $A \subset M$ be a measurable set, and let $r \colon A \to (0, \infty)$ be a function. Then there exists a countable disjoint collection $\mathcal{B}_A = \{ B_M(a_i, \rho_i) : i \in \N \}$ of open balls for which $A \setminus \cup \mathcal{B}_A$ has Lebesgue measure zero and $\rho_i \leq r(a_i)$ for each $i$.
\end{lemma}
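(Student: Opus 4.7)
The plan is to deduce the statement from the classical Vitali covering theorem in the doubling metric measure space $(M, d_M, \vol_M)$, where $d_M$ is the Riemannian distance. The two things I need are (i) that $\vol_M$ is a doubling measure on $M$, and (ii) that the balls we are allowed to select form a Vitali (fine) family at every point of $A$.

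For the doubling property, I would use that $M$ is compact. By compactness, $M$ admits a finite cover by pre-compact open sets $V_1, \ldots, V_N$ each equipped with a smooth bi-Lipschitz chart $\phi_j \colon V_j \to \R^n$, and there exists $R_0 > 0$ such that every geodesic ball $B_M(x, 2R_0)$ lies in some $V_j$. Under $\phi_j$, the measure $\vol_M$ corresponds to an absolutely continuous measure with density bounded above and below, so for $0 < \rho \le R_0$ the bi-Lipschitz distortion together with the Euclidean doubling property yields a uniform constant $C \ge 1$ with
\[
\vol_M(B_M(x, 2\rho)) \leq C\, \vol_M(B_M(x, \rho)).
\]
For $\rho > R_0$ both sides are pinched between $\inf_{x \in M} \vol_M(B_M(x, R_0)) > 0$ and $\vol_M(M) < \infty$, so the doubling inequality extends (with a possibly larger constant) to all radii.

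Given this, I would apply the standard Vitali covering theorem for doubling metric measure spaces (see e.g. Federer \cite[Theorem 2.8.18]{Federer1969book}). The family
\[
\mathcal{F} \;=\; \{ B_M(a, \rho) : a \in A,\ 0 < \rho \leq r(a) \}
\]
is a fine cover of $A$, since for each $a \in A$ the value $r(a) > 0$ makes balls in $\mathcal{F}$ available at arbitrarily small radii at $a$. The Vitali covering theorem then produces a countable pairwise disjoint subfamily $\mathcal{B}_A = \{B_M(a_i, \rho_i)\} \subset \mathcal{F}$ whose union covers $A$ up to a $\vol_M$-null set. Each selected ball automatically satisfies $\rho_i \leq r(a_i)$ by definition of $\mathcal{F}$.

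There is no real obstacle here; the only delicate point is checking that $(M, d_M, \vol_M)$ meets the hypotheses of whichever Vitali-type statement is invoked, and this is immediate from compactness together with the bi-Lipschitz chart structure described above.
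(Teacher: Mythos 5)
Your proposal is correct and follows essentially the same route as the paper, which likewise reduces the claim to Federer's covering theorem \cite[Theorem 2.8.18]{Federer1969book} (the paper verifies the hypotheses via Federer's notion of directionally limited metrics, \cite[Section 2.8.9]{Federer1969book}, rather than via the doubling property of $\vol_M$, but this is a cosmetic difference). The one detail you gloss over --- and the only substantive step in the paper's proof beyond the citation --- is that the covering theorem is stated for \emph{closed} balls, so to obtain the \emph{open} balls required by the lemma you must add the observation that boundaries of Riemannian balls have measure zero.
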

\begin{proof}
	By \cite[Theorem 2.8.18.\ and Section 2.8.9.]{Federer1969book}, the claim holds for a collection of closed balls. Since the boundary of Riemannian balls has Lebesgue measure zero, the claim follows.	
\end{proof}

Our definition of $f_*$ is based on the following lemma.

\begin{lemma}\label{pushwelldefined}
	Let $k \in \{0, \ldots, n\}$ and let $f \colon M \to N$ be a non-constant $K$-quasiregular map between closed, connected, oriented Riemannian manifolds. Then there exist open sets $V_f \subset N$ and $U_{f,1}, \ldots, U_{f, \deg f} \subset M$ for which:
	\begin{enumerate}
		\item the sets $U_{f, i}$ are disjoint;
		\item the sets $V_f$ and $\bigcup_{i=1}^{\deg f} U_{f, i}$ have full measure in $N$ and $M$ respectively;
		\item for every $i \in \{1, \ldots, \deg f\}$, we have $f(U_{f,i}) = V_f$, and there exists a quasiconformal branch of the inverse $f^{-1}_i \colon V_f \to U_{f, i}$.
	\end{enumerate}
	Furthermore, if $k \in \{0, \ldots, n\}$ and $\omega \colon M \to \wedge^k T^*M$ is a measurable $k$-form on $M$, then for every $i \in \{1, \ldots, \deg f\}$ the pull-back $\left(f^{-1}_i\right)^* \omega$ defines a measurable $k$-form on $N$.
\end{lemma}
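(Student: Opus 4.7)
The plan is to construct $V_f$ and the sheets $U_{f,i}$ by Vitali-decomposing the complement of the image of the branch set and then labeling sheets arbitrarily on each Vitali ball.

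\textbf{Step 1: Identify the generic locus.} Since $f$ is quasiregular and non-constant, the branch set $B_f$ has Lebesgue measure zero, and by Lusin's condition (N) for quasiregular maps, $f(B_f)$ has measure zero in $N$. Because $f$ is proper (and $M$ is compact), $f(B_f)$ is closed, so $N' := N \setminus f(B_f)$ is an open set of full measure. For every $y \in N'$ we have $|f^{-1}\{y\}| = \deg f$, since $i(x, f) = 1$ at every unbranched point and $\deg f = \sum_{x \in f^{-1}\{y\}} i(x,f)$.

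\textbf{Step 2: Sheeted neighborhoods.} Fix $y \in N'$ with $f^{-1}\{y\} = \{x_1,\ldots,x_{\deg f}\}$. Pick disjoint open neighborhoods $V_i \ni x_i$ on which $f$ is injective; this is possible since $x_i \notin B_f$. Since $M$ is compact and $f$ is continuous, $f(M \setminus \bigcup_i V_i)$ is a closed subset of $N$ not containing $y$, so there is a radius $r(y) > 0$ with $B_N(y, r(y)) \cap f(M \setminus \bigcup_i V_i) = \emptyset$. Shrinking further if needed so that $B_N(y, r(y)) \subset \bigcap_i f(V_i)$, any $\rho \leq r(y)$ satisfies
\[
	f^{-1}(B_N(y, \rho)) = \bigsqcup_{i=1}^{\deg f} U_{y, \rho, i},
\]
where each $U_{y, \rho, i} \subset V_i$ and $f\vert_{U_{y,\rho,i}} \colon U_{y,\rho,i} \to B_N(y, \rho)$ is a homeomorphism. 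Because $f$ is $K$-quasiregular and injective on $U_{y,\rho,i}$, this restriction is $K$-quasiconformal.

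\textbf{Step 3: Global Vitali assembly.} Apply Lemma \ref{vitalicoveringthm} to the measurable set $N'$ with radius function $y \mapsto r(y)$, obtaining a disjoint collection $\{B_j := B_N(y_j, \rho_j)\}_{j\in\N}$ with $\rho_j \leq r(y_j)$ and $N' \setminus \bigsqcup_j B_j$ of measure zero. On each $B_j$, label the $\deg f$ sheets arbitrarily as $U_{j,1}, \ldots, U_{j, \deg f}$, and set
\[
	V_f := \bigsqcup_{j} B_j, \qquad U_{f, i} := \bigsqcup_{j} U_{j, i}.
\]
These are open, and the $U_{f, i}$ are pairwise disjoint since the $B_j$ are disjoint in $N$ and the sheets are disjoint within each $f^{-1}(B_j)$. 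Moreover $f(U_{f, i}) = V_f$ by construction, and piecing together the homeomorphic inverses on each $B_j$ yields a homeomorphism $f_i^{-1} \colon V_f \to U_{f,i}$ which is $K$-quasiconformal on each component $B_j$, hence quasiconformal.

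\textbf{Step 4: Full measure and measurability.} The set $V_f$ has full measure in $N$ since $N \setminus V_f \subset f(B_f) \cup (N' \setminus \bigsqcup_j B_j)$, both of measure zero. To see $\bigsqcup_i U_{f,i} = f^{-1}(V_f)$ has full measure in $M$, apply the area formula: for any measurable $E \subset N$,
\[
	\int_{f^{-1}(E)} J_f \, \vol_M = \int_E \#\, f^{-1}\{y\} \, \vol_N,
\]
so $E$ of measure zero forces $J_f = 0$ a.e.\ on $f^{-1}(E)$; since $f$ is non-constant quasiregular, $J_f > 0$ a.e.\ on $M$, hence $f^{-1}(E)$ has measure zero. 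Finally, if $\omega$ is a measurable $k$-form on $M$, then since each $f_i^{-1}$ is quasiconformal it is differentiable almost everywhere with measurable weak differential, and the pointwise formula $((f_i^{-1})^*\omega)_y = \omega_{f_i^{-1}(y)} \circ \wedge^k D(f_i^{-1})(y)$ exhibits $(f_i^{-1})^*\omega$ as a measurable section of $\wedge^k T^*N$.

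The main subtlety is Step 2, ensuring the splitting $f^{-1}(B_N(y,\rho)) = \bigsqcup_i U_{y,\rho,i}$ with each sheet mapping \emph{onto} the full ball; this uses properness of $f$ together with local injectivity at unbranched points. The remaining arguments are then bookkeeping with Vitali and the area formula.
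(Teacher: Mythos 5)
Your proposal is correct and follows essentially the same route as the paper: remove the closed null set $f(B_f)$, choose at each generic point a radius for which the ball is evenly covered by $\deg f$ quasiconformal sheets, assemble these via the Vitali covering lemma, and handle full measure of the preimage by the change-of-variables formula together with $J_f>0$ a.e. The only cosmetic differences are that you spell out the properness argument for the evenly-covered radii and verify measurability of $(f_i^{-1})^*\omega$ via the a.e.\ pointwise pull-back formula rather than via coefficient functions, neither of which changes the substance.
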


\begin{proof}
	Since $fM$ is a compact and open subset of $N$, $f$ is surjective. Let $B_f \subset M$ denote the \emph{branch set} of $f$, that is, the set where $f$ fails to be a local homeomorphism. The set $B_f$ is closed, and the sets $B_f$ and $fB_f$ have measure zero; see e.g.\ \cite[Proposition I.4.14]{Rickman1993book}. By compactness of $M$ and continuity of $f$, the image of the branch set $fB_f$ is also closed.
	
	Let $y \in N \setminus fB_f$. If $x \in f^{-1}\{y\}$, then $f$ is a local orientation-preserving homeomorphism at $x$, and consequently $i(x, f) = 1$. Since we have $\deg f = \sum_{x \in f^{-1} \{y\}} i(x, f)$, the set $f^{-1}\{y\}$ consists of exactly $\deg f$ different points. Since $f$ is a local homeomorphism, at each $x \in f^{-1}\{y\}$ we may fix a radius $r_y > 0$ for which $f^{-1} B_N(y, r_y)$ has exactly $\deg f$ connected components and $f$ is a homeomorphism on every such component. Note that this property also holds for any smaller radius $r < r_y$.
	
	We may now apply the Vitali covering theorem (Lemma \ref{vitalicoveringthm}) for the set $N \setminus fB_f$ and the function $y \mapsto r_y$, obtaining a collection $\mathcal{B} = \{B_1, B_2, \ldots\}$ of disjoint open balls. For each $B_j \in \mathcal{B}$, the pre-image $f^{-1} B_j$ has $\deg f$ components, which we denote by $U_{f, i, j}$ for $i \in \{1, \ldots, \deg f\}$. Let $V_f = \cup \mathcal{B}$ and $U_{f, i} = \bigcup_{B_j \in \mathcal{B}} U_{f, i, j}$ for $i \in \{1, \ldots, \deg f\}$. Since the sets $U_{f, i, j}$ are disjoint for a fixed $j$ and the sets $B_j = fU_{f, i, j}$ are disjoint, the sets $U_{f, i}$ are disjoint.
	
	Since $fB_f$ and $(N \setminus fB_f) \setminus \cup \mathcal{B}_f$ have measure zero, the set $N \setminus \cup \mathcal{B}_f$ has measure zero. Furthermore, since $f^{-1} V_f = \bigcup_{i=1}^{\deg f} U_{f, i}$, we have 
	\[
		M \setminus \bigcup_{i=1}^{\deg f} U_{f, i} = f^{-1} (N \setminus V_f),
	\]
	where $f^{-1} (N \setminus V_f)$ has measure zero due to the Lusin property of $f$. Hence, the sets $V_f$ and $\bigcup_{i=1}^{\deg f} U_{f, i}$ have full measure in $N$ and $M$, respectively.
	
	We note that, for each $B_j \in \mathcal{B}_f$ and $i \in \{1, \ldots, \deg f\}$, the restriction $f\vert_{U_{f, i, j}} \colon U_{f, i, j} \to B_j$ is a $K$-quasiregular homeomorphism, and hence $K$-quasiconformal. We denote by $f^{-1}_{i,j} \colon B_j \to U_{f, i, j}$ the inverse of the restriction $f\vert_{U_{f, i, j}}$. Then $f^{-1}_{i,j}$ is $K^{n-1}$-quasiconformal.
	
	We now define the maps $f^{-1}_i \colon V_f \to U_{f,i}$ by $f^{-1}_i\vert_{B_j} = f^{-1}_{i,j}$ for each $j$. Since the maps $f^{-1}_{i,j}$ are $K^{n-1}$-quasiregular and the sets $B_j$ are open and mutually disjoint, $f^{-1}_i$ is $K^{n-1}$-quasiregular for every $i \in \{1, \ldots, \deg f\}$. Furthermore, since the maps $f^{-1}_{i,j}$ are homeomorphisms and the image sets $U_{f, i, j} = f^{-1}_{i,j}(B_j)$ are mutually disjoint, the maps $f^{-1}_i$ are homeomorphisms, and hence quasiconformal.
	
	Finally, let $\omega \colon M \to \wedge^k T^*M$ be a measurable $k$-form on $M$, and fix $i \in \{1, \ldots, \deg f\}$. Let $\xi \colon N \to \wedge^k T^*N$ be a $k$-form defined by $\xi = (f^{-1}_i)^*\omega$ on $V_f$, and $\xi = 0$ on $N \setminus V_f$. 
	
	The form $\xi$ is measurable if and only if the coefficient functions $\xi_I$ in a local representation $\xi = \sum_I \xi_I dx_{i_1}\wedge \cdots \wedge dx_{i_k}$ are measurable, where $I=(i_1,\ldots, i_k)$. Let $W \subset \R$ be an open set. Since $B_j$ is open, $\xi \vert B_j = (f^{-1}_{i,j})^*\omega$, and the pull-back of a measurable form under a quasiregular map is measurable, we obtain that the set $\xi_I^{-1} W \cap B_j$ is measurable for every $B_j \in \mathcal{B}_f$. Furthermore, since the set $\xi_I^{-1} W \cap (N \setminus V_f)$ is contained in a set of measure zero, it is measurable. Hence, $\xi_I^{-1} W$ is measurable, completing the proof of measurability of $\xi_I$, and therefore of $\xi$.
\end{proof}

Due to the previous lemma, we obtain a well-defined quasiregular push-forward operator as follows.
	
\begin{defn}
	Let $k \in \{0, \ldots, n\}$ and $f \colon M \to N$ a non-constant quasiregular map between closed, connected, oriented Riemannian manifolds. Let $V_f$, $U_{f, i}$ and $f_i^{-1}$ be given by Lemma \ref{pushwelldefined}. For a measurable $k$-form $\omega \colon M \to \wedge^k T^*M$, we define a measurable $k$-form $\push{f}\omega \colon N \to \wedge^k T^*N$ by
	\[
		\push{f} \omega = \sum_{i=1}^{\deg f} \left(f^{-1}_i\right)^* \omega,
	\]
	where $\left(f^{-1}_i\right)^* \omega$ denotes the corresponding induced measurable form on $N$. 
\end{defn}

\begin{rem}
	We note that the resulting $\push{f} \omega$ is independent of the choice of $V_f$, $U_{f, i}$ and $f_i^{-1}$ up to measure zero, and therefore we may consider it without specifying $V_f$, $U_{f, i}$ and $f_i^{-1}$. Indeed, suppose $V_f'$, $U_{f, i}'$ and $(f_i^{-1})'$ also satisfy the conditions of Lemma \ref{pushwelldefined}. Then, for every $y \in V_f \cap V_f'$, we find a neighborhood $\bigcap_{x \in f^{-1}\{y\}} f(U_{f,x} \cap U_{f,x}')$ on which the branches of the inverse coincide, where $U_{f,x}$ and $U_{f,x}'$ denote the sets $U_{f, i}$ and $U_{f, i}'$ containing $x$, respectively. Hence, the two selections yield identical forms $\push{f} \omega$ on $V_f \cap V_f'$, which is a set of full measure in $N$.
\end{rem}

The main result of this section is the following theorem.
	
\begin{thm}\label{qrpushsobolev}
	Let $f \colon M \to N$ be a non-constant quasiregular map between closed, connected, oriented Riemannian manifolds, and let $\omega \in \cesobt(\wedge^k M)$. Then $\push{f} \omega \in \cesobt(\wedge^k N)$ with $d \push{f} \omega = \push{f} d\omega$.
\end{thm}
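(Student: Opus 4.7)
My plan is to analyze $\push{f}\omega$ branchwise using Lemma \ref{pushwelldefined}: on the full-measure open set $V_f \subset N$ we have
\[
	\push{f} \omega = \sum_{i=1}^{\deg f} \left(f_i^{-1}\right)^* \omega,
\]
where each $f_i^{-1} \colon V_f \to U_{f, i}$ is a degree-one quasiconformal homeomorphism onto a member of a partition of $M$ modulo null sets. Both the required integrability of $\push{f}\omega$ and the commutation with $d$ will be established one branch at a time and reassembled using disjointness of the $U_{f, i}$ together with $\bigcup_i U_{f, i} = M$ modulo a null set.

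For the integrability of $\push{f}\omega$ in $\cesobt(\wedge^k N)$, the regime $0 < k < n$ follows directly from Lemma \ref{qrnormestimate} applied to each quasiconformal homeomorphism $f_i^{-1}$: this controls $\norm{(f_i^{-1})^*\omega}_{L^{n/k}(V_f)}$ by $\norm{\omega}_{L^{n/k}(U_{f, i})}$, and summing the $\deg f$ branches via the convex inequality $|\sum_i a_i|^{n/k} \leq (\deg f)^{n/k-1}\sum_i |a_i|^{n/k}$ bounds $\norm{\push{f}\omega}_{n/k}$. The edge cases $k = 0$ and $k = n$ invoke the higher integrability tools from the proof of Lemma \ref{qrpull-backhigherint}. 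For $k = n$ and $\omega \in L^s(\wedge^n M)$ with $s > 1$, change of variables gives $\int_{V_f} |(f_i^{-1})^*\omega|^{s'} \vol_N = \int_{U_{f, i}} |\omega|^{s'} J_f^{1-s'} \vol_M$, and H\"older's inequality combined with $J_f^{-\eps} \in L^1(M)$ yields $\push{f}\omega \in L^{s'}(\wedge^n N)$ for some $s' > 1$. For $k = 0$, the analogous identity $\int_{V_f}|\omega\circ f_i^{-1}|^t \vol_N = \int_{U_{f,i}}|\omega|^t J_f \vol_M$ combined with H\"older's inequality and $J_f \in L^r(M)$ for some $r > 1$ yields $\push{f}\omega \in L^t(N)$ for every $t < \infty$.

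The commutation $d\push{f}\omega = \push{f}d\omega$ rests on the adjoint formula
\[
	\int_N (\push{f}\alpha) \wedge \beta = \int_M \alpha \wedge f^*\beta
\]
valid for $\alpha \in \cesobt(\wedge^k M)$ and $\beta \in C^\infty(\wedge^{n-k} N)$. Its proof reduces branchwise to the identity $(f_i^{-1})^*\alpha \wedge \beta = (f_i^{-1})^*(\alpha \wedge f^*\beta)$ on $V_f$, which follows from $f \circ f_i^{-1} = \id_{V_f}$, after which the standard change of variables for $n$-forms under the orientation-preserving homeomorphism $f_i^{-1}$ gives $\int_{V_f}(f_i^{-1})^*(\alpha\wedge f^*\beta) = \int_{U_{f,i}}\alpha\wedge f^*\beta$, and summing over $i$ produces the claimed formula. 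Applying it with $\beta = d\eta$ for smooth $\eta \in C^\infty(\wedge^{n-k-1}N)$, invoking Lemma \ref{qrchainmap} to rewrite $f^*d\eta = d(f^*\eta)$, and then performing a Sobolev integration by parts on the closed manifold $M$ yields $\int_N \push{f}\omega \wedge d\eta = (-1)^{k+1}\int_M d\omega \wedge f^*\eta$; the adjoint formula applied with $\alpha = d\omega$ then completes the identification of $\push{f}d\omega$ as the weak exterior derivative of $\push{f}\omega$.

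I expect the Sobolev integration by parts on $M$ to be the main technicality, since the ``test form'' $f^*\eta$ lies only in $\cesobt(\wedge^{n-k-1}M)$ rather than in $C^\infty$; it must therefore be justified by the density of smooth forms in $\cesobt$ (Iwaniec--Scott--Stroffolini) together with H\"older duality between conjugate conformal exponents. The edge regimes where the relevant pairings involve $L^{\infty,\flat}$ and $L^{1,\sharp}$ require additional care to verify that the approximating wedge products converge in $L^1(M)$.
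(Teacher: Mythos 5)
Your proposal is correct and follows essentially the same route as the paper: branchwise integrability via change of variables plus convexity (with higher integrability of $J_f$ and $J_f^{-\eps}$ for the $k=0$ and $k=n$ edge cases), and commutation with $d$ via the projection formula $\int_N(\push{f}\alpha)\wedge\beta=\int_M\alpha\wedge f^*\beta$ tested against smooth forms, reduced to a Leibniz rule for $\omega\wedge f^*\eta$ with $f^*\eta$ only Sobolev. The technicality you flag at the end is precisely what the paper isolates as Lemma \ref{sobolevwedges}, handled by the same density-plus-H\"older argument you describe.
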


As an immediate corollary we obtain that $f_*$ is a chain map. Furthermore, we obtain that, in cohomology, $f_*$ is a left-inverse of the pull-back $f^*$ up to the multiplication by the degree of $f$.

\begin{cor}\label{qrpushcohom}
	Let $f \colon M \to N$ be a non-constant quasiregular map between closed, connected, oriented Riemannian manifolds. The linear map $\push{f} \colon \cesobt(\wedge^* M) \to \cesobt(\wedge^* N)$ is a chain map, and induces a linear map $\push{f} \colon \cehom{*}(M) \to \cehom{*}(N)$ satisfying
	\[
		\push{f} f^* [\omega] = (\deg f)[\omega].
	\]
	for each $[\omega] \in \cehom{*}(M)$.
\end{cor}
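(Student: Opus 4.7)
The corollary splits cleanly into two parts. For the chain-map claim, Theorem \ref{qrpushsobolev} already gives the key identity $d \push{f} \omega = \push{f} d\omega$ for every $\omega \in \cesobt(\wedge^k M)$ and every $k$, which is exactly the statement that $\push{f}$ commutes with the differentials of the two $\cesobt$-complexes. It then follows formally that closed forms are sent to closed forms and exact forms to exact forms, so $\push{f}$ descends to a well-defined linear map on cohomology. Since $M$ and $N$ are closed, the $\cesobt$- and $\cesobtloc$-complexes compute the same cohomology $\cehom{*}$, so the induced map reads as $\push{f} \colon \cehom{*}(M) \to \cehom{*}(N)$.

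For the composition identity, I would establish it already at the level of forms: given $\omega \in \cesobt(\wedge^k N)$, the claim is that $\push{f}(f^* \omega) = (\deg f)\omega$ almost everywhere on $N$. Apply Lemma \ref{pushwelldefined} to obtain a full-measure set $V_f \subset N$, disjoint sets $U_{f,1}, \ldots, U_{f,\deg f} \subset M$ whose union has full measure, and quasiconformal branches $f^{-1}_i \colon V_f \to U_{f,i}$ of the inverse, which by construction satisfy $f \circ f^{-1}_i = \id_{V_f}$. On $V_f$,
\[
  \push{f}(f^*\omega)
  = \sum_{i=1}^{\deg f} (f_i^{-1})^*(f^* \omega)
  = \sum_{i=1}^{\deg f} (f \circ f_i^{-1})^* \omega
  = (\deg f) \omega.
\]
Since $V_f$ has full measure in $N$, the identity holds a.e.\ on $N$, hence as an equality in $\cesobt(\wedge^k N)$. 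Passing to cohomology classes then yields the asserted formula.

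The one point requiring care is the middle equality above, namely the chain rule $(f_i^{-1})^* f^* = (f \circ f_i^{-1})^*$ for the pull-back under composition of quasiregular maps. At the pointwise level, recall from Section \ref{subsect:qr} that $(f^*\omega)_x = \omega_{f(x)} \circ \wedge^k Df(x)$ holds almost everywhere. Applying this formula twice at a point $y \in V_f$ at which both $f$ at $f_i^{-1}(y)$ and $f_i^{-1}$ at $y$ are differentiable, and using the standard chain rule $Df(f_i^{-1}(y)) \circ Df_i^{-1}(y) = \id$, yields $((f_i^{-1})^*f^*\omega)_y = \omega_y$. Since $f_i^{-1}$ is quasiconformal, it is differentiable almost everywhere and satisfies Lusin's condition (N); the set of $y \in V_f$ at which this argument applies therefore has full measure, which suffices for the form-level identity.
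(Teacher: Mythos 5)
Your proposal is correct and takes essentially the same route as the paper: the chain-map claim is exactly Theorem \ref{qrpushsobolev}, and your direct computation of $\push{f} f^*\omega = (\deg f)\omega$ on $V_f$ is the content of Lemma \ref{qrpushproperties}\eqref{pushprop:inverse}, which the paper derives from the wedge identity $\push{f}(\alpha \wedge f^*\beta) = (\push{f}\alpha)\wedge\beta$ specialized to $\alpha = \mathcal{X}_M$ but which ultimately rests on the same chain rule $(f_i^{-1})^* f^* = (f\circ f_i^{-1})^* = \id$ that you verify. (One pedantic point: the a.e.\ validity of that chain rule needs preimages under $f_i^{-1}$ of null sets to be null, i.e.\ condition $(N)$ for $f$ rather than for $f_i^{-1}$; since quasiconformal homeomorphisms and their inverses both satisfy $(N)$, your argument stands.)
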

		
We prove Theorem \ref{qrpushsobolev} in a series of lemmas. The push-forward map $\push{f}$ is clearly linear. We begin by collecting some of the basic properties of $f_*$ in the following lemma.
	
\begin{lemma}\label{qrpushproperties}
	Let $f \colon M \to N$ and $g \colon N \to N'$ be non-constant quasiregular maps between closed, connected, oriented Riemannian $n$-manifolds, and let $k, l \in \{0, \ldots, n\}$ satisfy $k+l \leq n$. Then 
	\begin{enumerate}
		\item \label{pushprop:wedge} for all measurable forms $\alpha \colon M \to \wedge^k T^*M$ and $\beta \colon M \to \wedge^l T^*M$, 
		\[
			\push{f} (\alpha \wedge f^* \beta) = (\push{f} \alpha) \wedge \beta;
		\]
		\item \label{pushprop:inverse} for every measurable $k$-form $\omega \colon M \to \wedge^k T^*M$, 
		\[
			\push{f} f^* \omega = (\deg f) \omega;
		\]
		\item \label{pushprop:functor} for every measurable $k$-form $\omega \colon M \to \wedge^k T^*M$, 
		\[
			\push{(g \circ f)} \omega = \push{g} \push{f} \omega;
		\]
		\item \label{pushprop:integral} for every integrable $n$-form $\omega \in L^1(\wedge^n M)$,
		\[
			\int_N \push{f} \omega = \int_M \omega; 
		\]
	\end{enumerate}
\end{lemma}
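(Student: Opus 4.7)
The plan is to fix a common decomposition from Lemma \ref{pushwelldefined} and then verify each of (1)--(4) by an almost-everywhere pointwise computation on $V_f$, using the facts that $f\circ f_i^{-1}=\id_{V_f}$, that the branches $f_i^{-1}\colon V_f\to U_{f,i}$ are orientation-preserving quasiconformal homeomorphisms, and that the sets $V_f$ and $\bigsqcup_i U_{f,i}$ have full measure. Throughout I would regard all identities as equalities of measurable forms, i.e.\ up to sets of measure zero.

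For (1), at almost every $y\in V_f$ we have
\[
	\push{f}(\alpha\wedge f^*\beta)(y)
	=\sum_{i=1}^{\deg f}(f_i^{-1})^*(\alpha\wedge f^*\beta)(y)
	=\sum_{i=1}^{\deg f}(f_i^{-1})^*\alpha(y)\wedge (f\circ f_i^{-1})^*\beta(y),
\]
and since $f\circ f_i^{-1}=\id_{V_f}$ the second factor equals $\beta(y)$, so the sum collapses to $(\push{f}\alpha)(y)\wedge\beta(y)$. Property (2) is the specialization $\alpha=1$ of (1), with the constant function $1$ pushed to $\deg f$; equivalently, at a.e.\ $y\in V_f$ one computes $\push{f}f^*\omega(y)=\sum_i(f\circ f_i^{-1})^*\omega(y)=(\deg f)\omega(y)$. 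Property (4) is the change of variables: since each $f_i^{-1}$ is an orientation-preserving quasiconformal homeomorphism from $V_f$ onto $U_{f,i}$, a standard change of variables yields $\int_{V_f}(f_i^{-1})^*\omega=\int_{U_{f,i}}\omega$, and summing over $i$ together with the full-measure decompositions of $N$ and $M$ gives $\int_N\push{f}\omega=\sum_i\int_{U_{f,i}}\omega=\int_M\omega$. Integrability is not an issue because $M$ has finite volume and each $U_{f,i}$ is measurable.

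The main subtlety is (3), where I must arrange the decompositions compatibly. Let $V_g\subset N'$ and branches $g_j^{-1}\colon V_g\to U_{g,j}$ be given by Lemma \ref{pushwelldefined} for $g$, and let $V_f$ and $f_i^{-1}$ be as for $f$. Set
\[
	V = V_g\cap\bigcap_{j=1}^{\deg g} g\bigl(V_f\cap U_{g,j}\bigr).
\]
Since $g_j^{-1}$ is a homeomorphism and preserves null sets by Lusin's condition, each $V_g\setminus g(V_f\cap U_{g,j})$ has measure zero, so $V\subset N'$ has full measure. On $V$ the $(\deg g)(\deg f)$ maps $f_i^{-1}\circ g_j^{-1}$ are well-defined quasiconformal branches of $(g\circ f)^{-1}$, and, since $\deg(g\circ f)=(\deg g)(\deg f)$, they exhaust the branches provided by Lemma \ref{pushwelldefined} for $g\circ f$ up to relabeling. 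Hence at a.e.\ $y\in V$,
\[
	\push{(g\circ f)}\omega(y)
	=\sum_{i,j}(f_i^{-1}\circ g_j^{-1})^*\omega(y)
	=\sum_j (g_j^{-1})^*\!\Bigl(\sum_i(f_i^{-1})^*\omega\Bigr)(y)
	=\push{g}\push{f}\omega(y),
\]
which establishes (3). The hard part is the bookkeeping just described: once a single compatible $V$ has been chosen the identities are immediate, but some care is needed to confirm that the resulting branches genuinely realize the decomposition of Lemma \ref{pushwelldefined} applied to $g\circ f$, and that the remark after the definition allows us to pick representatives freely.
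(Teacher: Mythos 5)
Your proposal is correct and follows essentially the same route as the paper: the same pointwise computation on $V_f$ for (\ref{pushprop:wedge}), deducing (\ref{pushprop:inverse}) by pushing forward the constant function, change of variables for the quasiconformal branches for (\ref{pushprop:integral}), and composing branches over a common full-measure set for (\ref{pushprop:functor}). Your choice $V = V_g\cap\bigcap_j g(V_f\cap U_{g,j})$ is a slightly more careful variant of the paper's $V_{g\circ f}=V_g\cap g(V_f)$, guaranteeing directly that every $g_j^{-1}(y)$ lands in $V_f$; otherwise the arguments coincide.
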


\begin{proof}
	Fix $V_f$, $U_{f,i}$ and $f_i^{-1}$ according to Lemma \ref{pushwelldefined}. For \eqref{pushprop:wedge}, since $V_f$ is of full measure, it suffices to observe that
	\begin{align*}
		\push{f} (\alpha \wedge f^* \beta)\vert_{V_f}
		&= \sum_{i=1}^{\deg f} \left(f^{-1}_i\right)^* (\alpha \wedge f^* \beta)
		= \sum_{i=1}^{\deg f} \left(\left(f^{-1}_i\right)^*\alpha\right) \wedge \left(\left(f \circ f^{-1}_i\right)^*\beta\right)\\
		&= \sum_{i=1}^{\deg f} \left(\left(f^{-1}_i\right)^*\alpha\right) \wedge \beta
		= \left((\push{f} \alpha) \wedge \beta \right)\vert_{V_f}.
	\end{align*}
		
	Property \eqref{pushprop:inverse} is a corollary of \eqref{pushprop:wedge}. Indeed, let $\mathcal{X}_M$ be the characteristic function of $M$. Then, for every $y \in V_f$,
	\[
		(\push{f} \mathcal{X}_M)(y) = \sum_{i=1}^{\deg f} \mathcal{X}_M \circ f^{-1}_i(y) = \deg f.
	\]
	Hence, $\push{f} \mathcal{X}_M = (\deg f)\mathcal{X}_{N}$ almost everywhere on $N$, and therefore
	\[
		\push{f} f^* \omega = \push{f} \left( \mathcal{X}_M \wedge \left(f^* \omega\right)\right) 
		= \left(\push{f} \mathcal{X}_M\right) \wedge \omega = (\deg f) \mathcal{X}_{N} \wedge \omega = (\deg f) \omega.
	\]
	
	For \eqref{pushprop:functor}, we also fix $V_g$, $U_{g,j}$ and $g_j^{-1}$ as in Lemma \ref{pushwelldefined}, and define
	\begin{align*}
		V_{g \circ f} = V_g \cap g(V_f) &&
		U_{g \circ f, (i,j)} = U_{f,i} \cap f^{-1} U_{g,j}.
	\end{align*}
	It follows from the Lusin N property of $f$ and $g$ that $V_{g \circ f}$ and $\bigcup_{i,j} U_{g \circ f, (i,j)}$ have full measure. Furthermore, the rest of the conditions of Lemma \ref{pushwelldefined} also hold, with branches of the inverse $(g \circ f)_{(i,j)}^{-1} \colon V_{g \circ f} \to U_{g \circ f, (i,j)}$ given by 
	\[
		(g \circ f)_{(i,j)}^{-1} = f_i^{-1} \circ (g_j^{-1} \vert_{V_{g \circ f}}).
	\]
	Now, \eqref{pushprop:functor} follows by computing
	\begin{align*}
		&(\push{(g \circ f)}\omega)\vert_{V_{f \circ g}}
		= \sum_{j=1}^{\deg g} \sum_{i=1}^{\deg f}  (g_j^{-1} \vert_{V_{g \circ f}})^* (f_i^{-1})^* \omega\\
		&\qquad= \sum_{j=1}^{\deg g} (g_j^{-1} \vert_{V_{g \circ f}})^* (\push{f}\omega)
		= (\push{g} \push{f} \omega)\vert_{V_{f \circ g}}.
	\end{align*}
	
	Finally, \eqref{pushprop:integral} follows by the change-of-variables formula for the quasiconformal maps $f_i^{-1}$, since
	\begin{align*}
		\int_N \push{f} \omega &= \sum_{i=1}^{\deg f} \int_{V_f} \left(f^{-1}_i\right)^* \omega 
		= \sum_{i=1}^{\deg f} \int_{U_{f,i}} \omega = \int_M \omega.
	\end{align*}
\end{proof}

As the next step towards the proof of Theorem \ref{qrpushsobolev}, we show that the push-forward commutes with the (weak) exterior derivative. Towards this goal we state an auxiliary lemma.

\begin{lemma}\label{sobolevwedges}
	Let $f \colon M \to N$ be a non-constant quasiregular map between closed, connected, oriented Riemannian $n$-manifolds. Let $\omega \in \cesobt(\wedge^{k} M)$ and $\eta \in C^\infty(\wedge^{n-k-1}N)$, where $k \in \{0, \ldots, n-1\}$. Then $\omega \wedge f^*\eta$ has a weak differential in $L^1(\wedge^{n} M)$ satisfying
	\begin{equation}\label{eq:lemmawedgedifferential}
		d(\omega \wedge f^*\eta) = d\omega \wedge f^*\eta + (-1)^{k} \omega \wedge f^*d\eta.
	\end{equation}
\end{lemma}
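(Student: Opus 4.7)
The plan is to prove the weak Leibniz identity \eqref{eq:lemmawedgedifferential} in three stages: first check $L^1$-integrability of the right-hand side, next establish the identity when $\omega$ is smooth by approximating $f^*\eta$ by smooth forms, and finally extend to general $\omega \in \cesobt(\wedge^k M)$ by approximating $\omega$ itself. All limit passages invoke Lemma \ref{weakdbysmoothlimit}.

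For the integrability step, I would bound both summands using H\"older's inequality together with the pointwise estimate \eqref{eq:pointwise}. Since $N$ is closed and $\eta, d\eta$ are smooth, both forms are bounded; combining this with \eqref{eq:pointwise} and the change-of-variables formula places $f^*\eta$ in $L^{n/(n-k-1)}(\wedge^{n-k-1} M)$ and $f^*d\eta$ in $L^{n/(n-k)}(\wedge^{n-k} M)$ whenever the exponents are finite. The identities $(k+1)/n+(n-k-1)/n = 1$ and $k/n+(n-k)/n = 1$ show the relevant exponents are H\"older-conjugate, placing $d\omega\wedge f^*\eta$ and $\omega\wedge f^*d\eta$ in $L^1(\wedge^n M)$ in the interior range $1 \le k \le n-2$. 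In the boundary cases $k = 0$ and $k = n-1$, higher integrability of $J_f$ (Lemma \ref{qrpull-backhigherint}) together with finiteness of the volume of $M$ lets the spaces $L^{\infty,\flat}$ and $L^{1,\sharp}$ embed into the $L^p$ spaces required for H\"older to close.

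For smooth $\omega$, the pullback $f^*\eta$ belongs to a genuine Sobolev space $W^{d,p,q}(\wedge^{n-k-1}M)$ with $p, q \in (1,\infty)$, again by \eqref{eq:pointwise}. Since $M$ is closed, the density result of Iwaniec--Scott--Stroffolini \cite{IwaniecScottStroffolini1999paper} yields smooth $\tau_i$ with $\tau_i \to f^*\eta$ in $L^p$ and $d\tau_i \to f^*d\eta$ in $L^q$. Applying the classical smooth Leibniz rule $d(\omega\wedge\tau_i) = d\omega\wedge\tau_i + (-1)^k\omega\wedge d\tau_i$ and passing to the limit via H\"older (using boundedness of $\omega$ and $d\omega$) and Lemma \ref{weakdbysmoothlimit} gives \eqref{eq:lemmawedgedifferential} in this case. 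For general $\omega \in \cesobt(\wedge^k M)$, I approximate $\omega$ by smooth $\omega_j$: density of $C^\infty$ in $W^{d,n/k,n/(k+1)}$ for $1 \le k \le n-2$, density of $C^\infty(M)$ in $W^{1,n}(M) = \cesobt(\wedge^0 M)$ for $k = 0$, and, for $k = n-1$, the observation that $d\omega \in L^{1,\sharp}(\wedge^n M)$ on the compact manifold $M$ actually lies in $L^r(\wedge^n M)$ for some $r > 1$, reducing to density in $W^{d,n/(n-1),r}$. Applying the smooth-$\omega$ case to each $\omega_j$ and invoking H\"older together with Lemma \ref{weakdbysmoothlimit} finishes the proof.

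The main obstacle is the bookkeeping in the boundary cases $k \in \{0, n-1\}$, where $\cesobt$ is defined through the flat and sharp $L^p$ spaces rather than a standard $W^{d,p,q}$ space. In each such case one must reduce to a genuine $W^{d,p,q}$ with $p, q \in [1,\infty)$ in order to apply the density theorem and Lemma \ref{weakdbysmoothlimit}; the higher integrability of $J_f$ from Lemma \ref{qrpull-backhigherint} is the essential ingredient allowing this reduction.
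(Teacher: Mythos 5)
Your argument is correct, and its engine --- approximate both factors by smooth forms, apply the classical Leibniz rule, close the H\"older estimates using higher integrability of $J_f$ in the boundary cases, and pass to the limit via Lemma \ref{weakdbysmoothlimit} --- is exactly the engine of the paper's proof. The difference is one of economy: the paper observes that for $k>0$ both $\omega$ and $f^*\eta$ lie in the original conformal complex $\cesob(\wedge^* M)$, so those cases follow verbatim from Gol'dshtein--Troyanov \cite[Theorem 3.3]{GoldsteinTroyanov2010paper}, and only the case $k=0$ (where $\omega$ merely lies in the flattened space $L^{\infty,\flat}$) is argued by hand, with a \emph{simultaneous} approximation of $\omega$ and $f^*\eta$. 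You instead run the approximation for every $k$, in two sequential stages, which in effect reproves the cited theorem; this buys self-containedness at the cost of length, and your handling of the sharp/flat endpoints via Lemma \ref{qrpull-backhigherint} matches the paper's. One detail worth making explicit in your third stage for $k=0$: convergence $\omega_j\to\omega$ in $W^{1,n}(M)$ must be upgraded to convergence in $L^{s/(s-1)}(M)$ (with $s>1$ the higher-integrability exponent of $f^*d\eta$, so $s/(s-1)$ may exceed $n$) before H\"older closes; this follows from the continuity of the critical Sobolev embedding $W^{1,n}(M)\subset L^t(M)$ for all $t<\infty$ on the closed manifold $M$, or, as the paper does, by choosing the approximating sequence to converge in $W^{d,t,n}(\wedge^0 M)$ for $t=\max\{s/(s-1),\, nr/(nr-r-n)\}$ directly.
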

	
\begin{proof}
	For $k > 0$ we have $\omega \in \cesob(\wedge^{k} M)$ and $f^*\eta \in \cesob(\wedge^{n-k-1} M)$, and the claim follows directly from Gol'dshtein--Troyanov \cite[Theorem 3.3]{GoldsteinTroyanov2010paper}. Thus it remains to prove the case $k = 0$. We follow here the strategy of the proof of \cite[Theorem 3.3]{GoldsteinTroyanov2010paper}.
		
	By the higher integrability of $f$, there exist $r > n/(n-1)$ and $s > 1$ for which $f^*\eta \in L^r(\wedge^{n-1} M)$ and $f^*d\eta \in L^{s}(\wedge^{n} M)$; see \cite[Chapter 2.3.3]{HajlaszIwaniecMalyOnninen2008paper} for the discussion in this case. Note that the $L^s$-integrability of $f^*d\eta$ also follows by Lemma \ref{qrpull-backhigherint} and the $L^r$-integrability of $f^*\eta$ is a corresponding result for $k$-forms proven analogously.
	
	Now, by Hölder's inequality,
	\[
		\norm{\omega \wedge f^*\eta}_\frac{n}{n-1} \leq \norm{\omega}_{\frac{nr}{nr - r - n}}\norm{f^*\eta}_{r}
	\]
	and
	\[
		\norm{d\omega \wedge f^*\eta + (-1)^{k} \omega \wedge df^*\eta}_1 
		\leq \norm{d\omega}_{n}\norm{f^*\eta}_{\frac{n}{n-1}} + \norm{\omega}_{\frac{s}{s - 1}}\norm{f^*d\eta}_{s}.
	\]
	Since $\omega \in \cesobt(\wedge^{0} M)$, $\omega \in L^p(\wedge^{0} M)$ for every $1 \leq p < \infty$, and consequently $\omega \wedge f^*\eta \in L^{n/(n-1)}(\wedge^{n-1} M)$ and $d\omega \wedge f^*\eta + (-1)^{k} \omega \wedge df^*\eta \in L^1(\wedge^n M)$.
			
	To show that $\omega \wedge f^*\eta$ has a weak differential satisfying \eqref{eq:lemmawedgedifferential}, let $t = \max\{s/(s-1), nr/(nr-r-n)\}$. We fix a sequence $(\tau_i)$ in $C^\infty(\wedge^{n-1} M)$ that converges to $f^*\eta$ in $W^{d, r, s}(\wedge^{n-1} M)$, and a sequence $(\omega_i)$ in $C^\infty(M)$ that converges to $\omega$ in $W^{d, t, n}(\wedge^0 M)$. Since $M$ has finite measure, we have, by H\"older's inequality, that 
	\begin{align*}
		\norm{\omega_i \wedge \tau_i - \omega \wedge f^*\eta}_{\frac{n}{n-1}}
		&\leq \norm{\omega_i}_\frac{nr}{nr-r-n}
			\norm{\tau_i - f^*\eta}_r + \norm{\omega_i - \omega}_\frac{nr}{nr-r-n}
			\norm{f^*\eta}_r\\
		&\leq C\left( \norm{\omega_i}_t
			\norm{\tau_i - f^*\eta}_r + \norm{\omega_i - \omega}_t\norm{f^*\eta}_r\right)
	\end{align*}
	and
	\begin{align*}
		&\norm{d(\omega_i \wedge \tau_i) 
			- d\omega \wedge f^*\eta + (-1)^{k} \omega \wedge df^*\eta}_1\\
		&\qquad\leq \norm{d\omega_i}_n \norm{\tau_i - f^*\eta}_\frac{n}{n-1} 
			+ \norm{d\omega_i - d\omega}_n \norm{f^*\eta}_\frac{n}{n-1}\\
		&\qquad\qquad\qquad +\norm{\omega_i}_\frac{s}{s-1} \norm{d\tau_i - f^*d\eta}_s 
			+ \norm{\omega_i - \omega}_\frac{s}{s-1} \norm{f^*d\eta}_s\\
		&\qquad\leq C\left(\norm{d\omega_i}_n \norm{\tau_i - f^*\eta}_r 
			+ \norm{d\omega_i - d\omega}_n \norm{f^*\eta}_r\right)\\
		&\qquad\qquad\qquad + C\left(\norm{\omega_i}_t \norm{d\tau_i - f^*d\eta}_s 
			+ \norm{\omega_i - \omega}_t \norm{f^*d\eta}_s\right),
	\end{align*}
	where the constant $C$ depends on $n$, $r$, $s$, and the volume of the manifold $M$.
	
	Now, the right hand sides of the previous two estimates tend to zero as $i$ tends to infinity. Hence, the sequence $(\omega_i \wedge \tau_i)$ converges to $\omega \wedge f^*\eta$ in $L^{n/(n-1)}$ and the sequence $(d(\omega_i \wedge \tau_i))$ converges to $d\omega \wedge f^*\eta + (-1)^{k} \omega \wedge f^*d\eta$ in $L^1$. Since the forms $\omega_i \wedge \tau_i$ are smooth, they are in $W^{d, n/(n-1), 1}(\wedge^{n-1} M)$, and Lemma \ref{weakdbysmoothlimit} yields that $d(\omega \wedge f^*\eta) = d\omega \wedge f^*\eta + (-1)^{k} \omega \wedge f^*d\eta$. The claim follows.
\end{proof}

We are now ready to prove that the push-forward $f_*$ commutes with the (weak) exterior derivative.
	
\begin{lemma}\label{qrpushweakder}
	Let $f \colon M \to N$ be a non-constant quasiregular map between closed, connected, oriented Riemannian $n$-manifolds, and let $\omega \in \cesobt(\wedge^k M)$ for some $k \in \{0, \ldots, n-1\}$. Then the measurable $k$-form $\push{f} \omega$ has a weak derivative satisfying $d \push{f} \omega = \push{f} d\omega$.
\end{lemma}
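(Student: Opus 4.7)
The plan is to verify directly, via integration by parts on the closed manifold $M$, that $\push{f}d\omega$ satisfies the defining duality of the weak exterior derivative of $\push{f}\omega$ on $N$. As a preliminary step, I would record the standard equivalence, obtained through the Hodge star, between the $d^*$-based definition of the weak exterior derivative from the Preliminaries and the wedge formulation that an $L^1$-form $\xi$ of degree $k+1$ equals $d(\push{f}\omega)$ weakly precisely when
\[
\int_N \xi \wedge \eta = (-1)^{k+1} \int_N \push{f}\omega \wedge d\eta
\qquad \text{for all } \eta \in C^\infty(\wedge^{n-k-1} N).
\]
This is the formulation that meshes with the push-forward identities in Lemma \ref{qrpushproperties}.

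The core computation is then to apply Lemma \ref{sobolevwedges} to the product $\omega \wedge f^*\eta$ on $M$, which gives a weakly differentiable $(n-1)$-form with $L^1$ exterior derivative
\[
d(\omega \wedge f^*\eta) = d\omega \wedge f^*\eta + (-1)^k \omega \wedge f^* d\eta.
\]
Since $M$ is closed, integrating this identity gives zero on the left; this follows from the weak derivative definition by testing against the smooth $n$-form $\vol_M$, observing that $d^* \vol_M = 0$. Rearranging yields
\[
\int_M d\omega \wedge f^*\eta = (-1)^{k+1} \int_M \omega \wedge f^* d\eta,
\]
and then Lemma \ref{qrpushproperties}(\ref{pushprop:wedge}) rewrites each integrand as $\push{f}(\cdot) \wedge \eta$ or $\push{f}(\cdot) \wedge d\eta$, after which Lemma \ref{qrpushproperties}(\ref{pushprop:integral}) transfers each integral to $N$ to produce precisely the required duality with $\xi = \push{f} d\omega$.

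The step I expect to demand the most care is the bookkeeping of integrability exponents across the three cases $k=0$, $1 \le k \le n-2$, and $k = n-1$ of the $\cesobt$ definition. For each, I must verify that $\omega \wedge f^*\eta$ lies in the hypothesis space of Lemma \ref{sobolevwedges}, and that $\push{f}\omega$ and $\push{f} d\omega$ are integrable enough on $N$ for the final wedge integrals to be well defined. The conformal exponents are arranged so that $\omega$ and $d\omega$ pair conjugately, and the higher integrability of $f$ coming from Lemma \ref{qrpull-backhigherint} and Corollary \ref{qrflatandsharp} supplies the slack needed at the endpoint degrees. Once this integrability bookkeeping is in place, the rest of the proof is a purely algebraic combination of Lemma \ref{sobolevwedges} and the functorial properties of $\push{f}$ recorded in Lemma \ref{qrpushproperties}.
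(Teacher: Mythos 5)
Your proposal is correct and follows essentially the same route as the paper: both hinge on Lemma \ref{sobolevwedges} applied to $\omega \wedge f^*\eta$ (the paper uses $\hodge$ of a $(k+1)$-form test where you use a general smooth $(n-k-1)$-form, which is the same thing), the vanishing of $\int_M d(\omega\wedge f^*\eta)$ on the closed manifold $M$, and the transfer to $N$ via properties \eqref{pushprop:wedge} and \eqref{pushprop:integral} of Lemma \ref{qrpushproperties}. The integrability bookkeeping you flag is already absorbed into Lemma \ref{sobolevwedges} and Lemmas \ref{qrpushnorm} and \ref{qrpushflatsharp}, so no further work is needed there.
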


\begin{proof}
	Let $\eta \in C^\infty(\wedge^{k+1} N)$. By Lemma \ref{qrpushproperties},
	\begin{align*}
		\int_N \left<\push{f} d\omega, \eta\right> \vol_N
		= \int_N \left(\push{f} d\omega\right) \wedge \hodge\eta
		= \int_N \push{f} \left( d\omega \wedge f^*\hodge\eta \right).
	\end{align*}
	Furthermore, by Lemma \ref{sobolevwedges}, 
	\[
		d\omega \wedge f^*\hodge\eta 
		= d(\omega \wedge f^*\hodge\eta) - (-1)^{k} \omega \wedge df^*\hodge\eta
	\]
	and the $n$-form $d(\omega \wedge f^*\hodge\eta)$ is integrable. Thus, by Lemma \ref{qrpushproperties}, 
	\[
		\int_N \push{f} d(\omega \wedge f^*\hodge\eta) 
		= \int_M d(\omega \wedge f^*\hodge\eta) 
		= 0.
	\]
	Since
	\[
		(-1)^{k+1} df^*\hodge\eta 
		= (-1)^{nk-k+k+1} f^*\hodge\hodge d \hodge\eta 
		= f^* \hodge d^* \eta,
	\]
	we obtain
	\begin{align*}
		\int_N \left<\push{f} d\omega, \eta\right> \vol_N
		&= \int_N \push{f} \left( d\omega \wedge f^*\hodge\eta \right)
		= \int_N \push{f} \left( \omega \wedge (-1)^{k+1}df^*\hodge\eta \right)\\
		&= \int_N \push{f} \left( \omega \wedge f^*\hodge d^* \eta \right)
		= \int_N (\push{f} \omega) \wedge \hodge d^* \eta\\
		&= \int_N \left<\push{f} \omega, d^* \eta \right> \vol_N.
	\end{align*}
	Thus $\push{f} d\omega$ is the weak differential $d \push{f} \omega$ of $\push{f} \omega$.
\end{proof}

We continue with an $L^p$-estimate for the push-forward in the conformal exponent. In the following lemma, the space $L^{n/k}(\wedge^k M)$ for $k=0$ is the space $L^\infty(M)$.
 	
\begin{lemma}\label{qrpushnorm}
	Let $f \colon M \to N$ be a non-constant $K$-quasiregular map between closed, connected, oriented Riemannian $n$-manifolds, and let $\omega \in L^{n/k}(\wedge^k M)$ for some $k \in \{0, \ldots, n\}$. Then $\push{f} \omega \in L^{n/k}(\wedge^k N)$, and there exists a constant $C = C(n, k, K) \geq 0$ for which
	\[
		\norm{\push{f} \omega}_\frac{n}{k} \leq C \left(\deg f\right)^\frac{n-k}{n} \norm{\omega}_\frac{n}{k}.
	\]
\end{lemma}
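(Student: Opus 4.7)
The plan is to reduce the statement to the pointwise pull-back estimate \eqref{eq:pointwise} applied to each inverse branch, followed by a subadditivity step and a change-of-variables. Concretely, fix open sets $V_f \subset N$ and $U_{f,1},\ldots,U_{f,\deg f} \subset M$ as in Lemma \ref{pushwelldefined}, together with the quasiconformal inverse branches $f_i^{-1} \colon V_f \to U_{f,i}$. From the proof of Lemma \ref{pushwelldefined}, each $f_i^{-1}$ is $K^{n-1}$-quasiconformal, so \eqref{eq:pointwise} applies uniformly to all branches with a constant $C_0 = C_0(n,k,K)$.

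First I would treat the main range $0 < k < n$. For almost every $y \in V_f$, the triangle inequality gives
\[
    |(\push{f}\omega)(y)| \le \sum_{i=1}^{\deg f} |(f_i^{-1})^*\omega(y)|,
\]
and since $n/k \ge 1$, the power-mean inequality yields $\left(\sum_{i=1}^{\deg f} a_i\right)^{n/k} \le (\deg f)^{(n-k)/k} \sum_{i=1}^{\deg f} a_i^{n/k}$. Combined with the pointwise estimate \eqref{eq:pointwise} applied to each $f_i^{-1}$, this gives
\[
    |\push{f}\omega|^{n/k} \le C_0 (\deg f)^{(n-k)/k} \sum_{i=1}^{\deg f} \bigl(|\omega|^{n/k} \circ f_i^{-1}\bigr) J_{f_i^{-1}}
\]
almost everywhere on $V_f$. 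Integrating over $N$ and using the change of variables for each quasiconformal homeomorphism $f_i^{-1} \colon V_f \to U_{f,i}$, the $i$-th term becomes $\int_{U_{f,i}} |\omega|^{n/k} \vol_M$. Since the $U_{f,i}$ are pairwise disjoint and their union has full measure in $M$, the sum collapses to $\int_M |\omega|^{n/k} \vol_M$. Taking the $k/n$-th root yields the claim with $C = C_0^{k/n}$.

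For the boundary cases $k=0$ and $k=n$ the argument simplifies. For $k=0$, the power-mean step is unnecessary: at almost every $y \in V_f$,
\[
    |\push{f}\omega(y)| = \Bigl|\sum_{i=1}^{\deg f}\omega(f_i^{-1}(y))\Bigr| \le (\deg f)\norm{\omega}_\infty,
\]
giving $\norm{\push{f}\omega}_\infty \le (\deg f)\norm{\omega}_\infty$, which matches the claim with exponent $(n-0)/n=1$. For $k=n$, the inequality $(\sum a_i)^1 = \sum a_i$ makes the $(\deg f)^{(n-k)/k}$ factor disappear, and the rest of the argument proceeds identically. I expect no serious obstacle: the only mild subtlety is verifying that the constant produced by \eqref{eq:pointwise} for the $K^{n-1}$-quasiconformal branches still has the form $C(n,k,K)$, which is immediate from the shape of \eqref{eq:pointwise}.
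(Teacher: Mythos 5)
Your proposal is correct and follows essentially the same route as the paper: decompose via the inverse branches of Lemma \ref{pushwelldefined}, apply the elementary power-mean inequality $(\sum a_i)^{n/k}\le(\deg f)^{(n-k)/k}\sum a_i^{n/k}$, and control each branch by the quasiconformal pull-back estimate plus change of variables (the paper packages this last step as Lemma \ref{qrnormestimate} applied to the $K^{n-1}$-quasiconformal branches $f_i^{-1}$, which is exactly your \eqref{eq:pointwise}-plus-change-of-variables argument). Your explicit handling of $k=0$ and $k=n$ matches what the paper treats as trivial or absorbs into the main computation.
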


\begin{proof}
	The case $k=0$ follows trivially from the definition and the Lusin property of $f$, and we may assume that $k\in \{1,\ldots, n\}$.
	
	Fix $V_f$, $U_{f, i}$ and $f_i^{-1}$ as in Lemma \ref{pushwelldefined}. Recall that the maps $f_i^{-1} \colon V_f \to U_{f, i}$ are $K'$-quasiconformal, where $K' = K'(K, n)$. Let $C = C(n, k, K')$ be the constant of Lemma \ref{qrnormestimate}. Then, by the elementary inequality 
	\[
		(a_1 + \ldots + a_l)^p \leq l^{p-1}(a_1^p + \ldots + a_l^p)
	\]
	for $p \geq 1$ and non-negative $a_1, \ldots, a_l$, we obtain 
	\begin{align}\label{eq:pushnormestimate}\begin{split}
		\left( \int_N \abs{\push{f} \omega}^\frac{n}{k} \vol_N \right)^\frac{k}{n} 
		&= \left( \int_{V_f} \abs{\left(f_1^{-1}\right)^*\omega + \ldots
			+ \left(f_{\deg f}^{-1}\right)^*\omega}^\frac{n}{k} \vol_N \right)^\frac{k}{n}\\
		&\leq \left((\deg f)^{\frac{n}{k}-1} \sum_{i=1}^{\deg f} \int_{V_f} 
			\abs{\left(f_i^{-1}\right)^*\omega}^\frac{n}{k} \vol_N \right)^\frac{k}{n}\\
		&\leq \left((\deg f)^{\frac{n-k}{k}} \sum_{i=1}^{\deg f} C \int_{U_{f,i}} 
			\abs{\omega}^\frac{n}{k} \vol_M \right)^\frac{k}{n}\\
		&= C^\frac{k}{n} (\deg f)^{\frac{n-k}{n}} \left(\int_{M} 
			\abs{\omega}^\frac{n}{k} \vol_M \right)^\frac{k}{n}.
	\end{split}\end{align}
	This concludes the proof.
\end{proof}

Finally, we show that the push-forward operator preserves the sharp and flat spaces $L^{1,\sharp}(\wedge^n M)$ and $L^{\infty,\flat}(\wedge^0 M)$. We formulate this property as follows.
	
\begin{lemma}\label{qrpushflatsharp}
	Let $f \colon M \to N$ be a non-constant quasiregular map between closed, connected, oriented Riemannian $n$-manifolds. Then the push-forward operator $\push{f}$ on measurable forms maps $L^{1, \sharp}(\wedge^n M)$ into $L^{1, \sharp}(\wedge^n N)$ and $L^{\infty, \flat}(\wedge^0 M)$ into $L^{\infty, \flat}(\wedge^0 N)$.
\end{lemma}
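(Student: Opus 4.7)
My plan is to reduce the claim to statements about the individual pull-backs by the inverse branches of $f$, and then apply the argument of Lemma \ref{qrpull-backhigherint} to each such branch. By the definition of $f_*$, we have $f_* \omega = \sum_{i=1}^{\deg f}(f_i^{-1})^*\omega$ on the full-measure set $V_f \subset N$, and since both $L^{1,\sharp}(\wedge^n N)$ and $L^{\infty,\flat}(\wedge^0 N)$ are closed under finite sums, it suffices to show each summand lies in the corresponding space. Fixing $i$ and writing $g = f_i^{-1} \colon V_f \to U_{f,i}$ for the quasiconformal branch of the inverse, whose inverse is the restriction $f|_{U_{f,i}}$, we have $J_g \circ f = 1/J_f$ almost everywhere.

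The higher integrability hypotheses needed to run the argument of Lemma \ref{qrpull-backhigherint} globally on $V_f$ follow from the corresponding global integrability of $J_f$ on the closed manifold $M$. Indeed, there exist $r > 1$ and $\epsilon > 0$, depending only on $n$ and $K$, with $J_f \in L^r(M)$ and $J_f^{-\epsilon} \in L^1(M)$, and we may shrink $\epsilon$ so that $1+\epsilon \leq r$. The change-of-variables identity $\int_{V_f} \psi(J_g) \, \vol_N = \int_{U_{f,i}} \psi(1/J_f) J_f \, \vol_M$ then yields $\int_{V_f} J_g^{1+\epsilon}\, \vol_N = \int_{U_{f,i}} J_f^{-\epsilon}\, \vol_M < \infty$ and $\int_{V_f} J_g^{-\epsilon}\, \vol_N = \int_{U_{f,i}} J_f^{1+\epsilon}\, \vol_M < \infty$, i.e., the global analogues of the two higher-integrability facts used in the proof of Lemma \ref{qrpull-backhigherint} hold for $g$ on $V_f$.

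With these bounds available, the Hölder computations of \eqref{eq:higherpullbackn} and \eqref{eq:higherpullbackzero} transfer essentially verbatim. For the $L^{\infty,\flat}$ case, setting $q = 1 + 1/\epsilon$ and decomposing $|u\circ g|^t = (|u|^t \circ g) J_g^{1/q} \cdot J_g^{-1/q}$ yields, after Hölder and change of variables, a bound of $\int_{V_f} |u\circ g|^t\, \vol_N$ by a constant times $\bigl(\int_{U_{f,i}} |u|^{tq}\, \vol_M\bigr)^{1/q}$, which is finite for every $t \in [1,\infty)$ because $u \in L^{tq}(M)$. For the $L^{1,\sharp}$ case, using $|g^*\omega| = (|\omega|\circ g)J_g$ for $n$-forms together with change of variables reduces the $L^t$-norm to $\int_{U_{f,i}} |\omega|^t J_f^{-(t-1)}\, \vol_M$, and Hölder splits this into $L^{ta}$- and $J_f^{-(t-1)b}$-type factors with $1/a + 1/b = 1$. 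The main technical point, and the only place where care is needed, is to check that starting from $\omega \in L^s(M)$ for some $s > 1$ the constraints $ta \leq s$ and $(t-1)b \leq \epsilon$ are jointly feasible for some $t > 1$; a short arithmetic calculation shows the admissible range is $t \in (1,\, s(1+\epsilon)/(s+\epsilon))$, which is nonempty precisely because $s > 1$ and $\epsilon > 0$, thereby completing the $L^{1,\sharp}$ part and the proof.
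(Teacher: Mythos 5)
Your proof is correct and follows essentially the same route as the paper's: reduce to the inverse branches $f_i^{-1}$, transfer the positive and negative higher integrability of $J_f$ to $J_{f_i^{-1}}$ via change of variables, and then rerun the H\"older computations \eqref{eq:higherpullbackn} and \eqref{eq:higherpullbackzero}. The only cosmetic differences are that you normalize the exponents so that $1+\eps\le r$ (the paper instead uses $J_{f_i^{-1}}^{1-r}\in L^1$ directly) and that you carry out explicitly the exponent arithmetic that the paper delegates to the earlier displayed computations; your admissible range $t\in(1,\,s(1+\eps)/(s+\eps))$ matches exactly the exponent $p/q$ obtained there.
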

	
\begin{proof}
	Again, fix $V_f$, $U_{f, i}$ and $f_i^{-1}$ as in Lemma \ref{pushwelldefined}, and let $\eps > 0$ and $r > 1$ be such that $J_f^r$ and $J_f^{-\eps}$ are integrable over $M$. Since $f_i^{-1}$ is a right inverse of $f$, the chain rule yields $J_{f_i^{-1}} = J_f^{-1} \circ f_i^{-1}$. Now, for every $i \in \{1, \ldots, \deg f\}$,
	\begin{align*}
		\int_{V_f} J_{f_i^{-1}}^{1+\eps} \vol_N 
		&= \int_{V_f} J_{f_i^{-1}}^{\eps} J_{f_i^{-1}} \vol_N
		= \int_{V_f} \left(J_{f}^{-\eps} \circ f_i^{-1}\right) J_{f_i^{-1}} \vol_N\\
		&= \int_{U_{f,i}} J_{f}^{-\eps} \vol_M < \infty.
	\end{align*}
	Similarly, 
	\begin{align*}
		\int_{V_f} J_{f_i^{-1}}^{1-r} \vol_N 
		&= \int_{V_f} \left(J_{f}^{r-1} \circ f_i^{-1}\right) \vol_N
		= \int_{U_{f,i}} \left(J_{f}^{r-1} \circ f_i^{-1} \circ f\right) J_{f} \vol_M\\
		&= \int_{U_{f,i}} J_{f}^{r} \vol_M < \infty
	\end{align*}
	for every $i \in \{1, \ldots, \deg f\}$.
			
	Let $\omega \in L^{1, \sharp}(\wedge^n M)$. There exists $p > 1$ for which $\omega \in L^{p}(\wedge^n M)$. For every $i \in \{1, \ldots, \deg f\}$, since $J_{f_i^{-1}}^{1+\eps}$ is integrable, there exists $s > 1$ satisfying $\norm{(f_i^{-1})^* \omega}_{s} < \infty$; see \eqref{eq:higherpullbackn} for the computation. Similarly as in \eqref{eq:pushnormestimate}, we now obtain the estimate
	\[
		\norm{\push{f} \omega}_{s} 
		\leq \left((\deg f)^{s-1} \sum_{i=1}^{\deg f} \int_{V_f} 
		\abs{\left(f_i^{-1}\right)^*\omega}^{s} \vol_N \right)^\frac{1}{s} < \infty.
	\]
	Hence $\push{f}\omega \in L^{1, \sharp}(\wedge^n N)$.
			
	Similarly, let $u \in L^{\infty, \flat}(\wedge^0 M)$, and let $1 \leq p < \infty$. Since $u \in L^s(M)$ for every $1 \leq s < \infty$ and $J_{f_i^{-1}}^{1-r}$ is integrable, we have $\norm{(f_i^{-1})^* u}_{p} < \infty$ for every $i \in \{1, \ldots, \deg f\}$; see \eqref{eq:higherpullbackzero} for the computation. Thus $\norm{\push{f} u}_{p} < \infty$ and $\push{f}u \in L^{\infty, \flat}(\wedge^0 N)$.	
\end{proof}
	
Theorem \ref{qrpushsobolev} now follows immediately from Lemmas \ref{qrpushweakder}, \ref{qrpushnorm}, and \ref{qrpushflatsharp}.


\section{Norm in conformal cohomology}\label{sect:cohomology_norm}
	
In this chapter, we define norms in the conformal cohomology spaces $\cehom{*}(M)$ of a closed manifold $M$. We use a standard quotient norm construction; see e.g.\ Iwaniec--Scott--Stroffolini \cite[Section 7.1]{IwaniecScottStroffolini1999paper} and Bonk--Heinonen \cite[Section 3]{Bonk-Heinonen_Acta}. Furthermore, we obtain a norm estimate for the pull-back map $f^* \colon \cehom{*}(N) \to \cehom{*}(M)$ induced by a quasiregular map $f \colon M \to N$ between closed, connected, oriented manifolds. This estimate is a key part in the proof of Theorem \ref{alleigenvaluessamediag}.

Let $M$ be a closed Riemannian manifold, $k \in \{1, \ldots, n-1\}$, and let $\norm{\cdot}_{n/k} \colon \cehom{k}(M) \to [0, \infty)$ be the function
\[
	c \mapsto \inf_{\omega \in c} \norm{\omega}_{\frac{n}{k}}.
\]
The space $d\cesobt(\wedge^{k-1} M)$ is convex, and by Lemma \ref{confexpcompleteness}, it is a closed subspace of $L^{n/k}(\wedge^k M)$. Since each cohomology class $c\in \cehom{k}(M)$ is a closed affine subspace in the uniformly convex Banach space $L^{n/k}(\wedge^k M)$, there exists a unique $k$-form $\omega \in c$ for which $\norm{c}_{n/k} = \norm{\omega}_{n/k}$. Now, by a straightforward verification, $\norm{\cdot}_{n/k}$ is a norm on $\cehom{k}(M)$.

Our goal is to derive a version of Lemma \ref{qrnormestimate} in the cohomology norm for the pull-back $f^* \colon \cehom{*}(N) \to \cehom{*}(M)$. The upper bound follows directly from the upper bound in Lemma \ref{qrnormestimate}. To obtain the lower bound, we use Lemma \ref{qrpushnorm} together with the fact that $(\deg f)^{-1} \push{f}$ is a left inverse of the pull-back $f^*$.
	
\begin{thm}\label{cohomnormestimate}
	Let $f \colon M \rightarrow N$ be a non-constant $K$-quasiregular map between closed, connected, oriented Riemannian $n$-manifolds, and let $0 < k < n$. Then there is a constant $C = C(n, k, K) \geq 1$ for which
	\begin{equation}\label{eq:cohom_ineq}
		C^{-1}\left(\deg f\right)^{\frac{k}{n}} \norm{c}_\frac{n}{k}
		\leq \norm{f^*c}_\frac{n}{k}
		\leq C\left(\deg f\right)^{\frac{k}{n}} \norm{c}_\frac{n}{k}.
	\end{equation}
	for all $c \in \cehom{k}(N)$.
\end{thm}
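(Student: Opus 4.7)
My plan is to prove the two inequalities in \eqref{eq:cohom_ineq} separately. The upper bound follows essentially directly from the pointwise/integral estimate of Lemma \ref{qrnormestimate}, while the lower bound requires the push-forward operator as a substitute for the (non-existent) right inverse of $f^*$ at the level of forms. The central observation enabling the lower bound is that, by Corollary \ref{qrpushcohom}, the operator $(\deg f)^{-1} f_*$ is a left inverse of $f^*$ on cohomology, and by Lemma \ref{qrpushnorm} it is bounded on $L^{n/k}$.

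For the upper bound, I would fix any representative $\omega \in c$. Then $f^* \omega \in f^* c$, so $\norm{f^* c}_{n/k} \leq \norm{f^* \omega}_{n/k}$. Applying Lemma \ref{qrnormestimate} to $\omega \in L^{n/k}(\wedge^k N)$ (which is in this space since $c$ is an $L^{n/k}$-affine subspace of $\cesobt(\wedge^k N)$), we get
\[
\norm{f^* \omega}_{n/k} \leq C(n,k,K)^{k/n} (\deg f)^{k/n} \norm{\omega}_{n/k}.
\]
Taking the infimum over $\omega \in c$ yields the upper bound.

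For the lower bound, the subtlety addressed right after the statement of Theorem \ref{thm:estimate} comes into play: a representative $\eta$ of $f^* c$ need not equal $f^* \omega$ for some $\omega \in c$. Here I would use the push-forward. Given any $\eta \in f^* c$, Corollary \ref{qrpushcohom} yields $(\deg f)^{-1} f_* \eta \in c$, so
\[
\norm{c}_{n/k} \leq (\deg f)^{-1} \norm{f_* \eta}_{n/k} \leq C(\deg f)^{-1} (\deg f)^{(n-k)/n} \norm{\eta}_{n/k} = C (\deg f)^{-k/n} \norm{\eta}_{n/k},
\]
where the middle inequality is Lemma \ref{qrpushnorm}. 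Taking the infimum over $\eta \in f^* c$ gives $(\deg f)^{k/n} \norm{c}_{n/k} \leq C \norm{f^* c}_{n/k}$, which is the desired lower bound after renaming constants.

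The only real content here is the use of the push-forward on the level of forms to circumvent the failure of $f^*$ to be surjective from $c$ onto $f^* c$. All the quasiregularity constants merge into a single $C=C(n,k,K)$, so no further work is needed to verify the uniformity. In this sense the lemma is a direct consequence of the pull-back estimate of Lemma \ref{qrnormestimate} combined with the push-forward estimate of Lemma \ref{qrpushnorm} and the left-inverse identity $\push{f} f^* = (\deg f) \id$ of Lemma \ref{qrpushproperties}\eqref{pushprop:inverse}.
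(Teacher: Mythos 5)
Your proposal is correct and follows essentially the same route as the paper: the upper bound from Lemma \ref{qrnormestimate}, and the lower bound by pushing a representative of $f^*c$ forward via Corollary \ref{qrpushcohom} and Lemma \ref{qrpushnorm} to land in $(\deg f)c$. The only cosmetic difference is that you take infima over all representatives where the paper invokes the unique norm-minimizing representatives; both yield the same estimate.
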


\begin{rem}
	Note that for $k\in\{0,n\}$, the spaces $\cehom{k}(N)$ are one-dimensional and the mappings $f^* \colon \cehom{k}(N) \to \cehom{k}(M)$ are completely understood by Theorem \ref{derhamsobolevequivalence}. Indeed, given a continuous map $f\colon M \to N$ between closed, connected, oriented $n$-manifolds,  $f^* \colon H^*(N; \R) \to H^*(M; \R)$ maps the generator $[x\mapsto 1]$ of $H^0(N;\Z)$ to the generator $[x\mapsto 1]$ of $H^0(M;\Z)$ and the positive generator $c_N \in H^n(N;\Z)$ to $(\deg f)c_M\in H^n(M;\Z)$, where $c_M$ is the positive generator of $H^n(M;\Z)$.
\end{rem}

\begin{proof}[Proof of Theorem \ref{cohomnormestimate}]
	Let $c \in \cehom{k}(N)$ and let $\omega \in c$ be the $k$-form satisfying $\norm{\omega}_{n/k} = \norm{c}_{n/k}$. By Lemma \ref{qrnormestimate}, we have
	\begin{align*}
		\norm{f^*c}_\frac{n}{k}
		&\leq \norm{f^*\omega}_\frac{n}{k}
		\leq \left(C (\deg f)\int_M \abs{\omega}^\frac{n}{k} \vol_M\right)^\frac{k}{n}
		= C^{\frac{k}{n}} \left(\deg f\right)^\frac{k}{n} \norm{c}_\frac{n}{k},
	\end{align*}
	where the constant $C$ depends only on $n$, $k$, and $K$.
	
	To prove the other inequality, let $\tau \in f^*c$ be the $k$-form satisfying $\norm{\tau}_{n/k} = \norm{f^*c}_{n/k}$. 
	Then, by Corollary \ref{qrpushcohom},
	\[
		\push{f} \tau \in \push{f} f^* c = (\deg f) c.
	\]
	Hence, we obtain
	\begin{align*}
		\norm{f^*c}_\frac{n}{k}
		= \norm{\tau}_\frac{n}{k}
		\geq (C')^{-1} (\deg f)^{-\frac{n-k}{n}}\norm{\push{f} \tau}_\frac{n}{k}
		\geq (C')^{-1} (\deg f)^{\frac{k}{n}} \norm{c}_\frac{n}{k},
	\end{align*}
	where $C' = C'(n,k,K)$ is given by Lemma \ref{qrpushnorm}. This concludes the proof.
\end{proof}


\section{Eigenvalues and diagonalizability}
In this section, we prove Theorem \ref{alleigenvaluessamediag}. The result follows directly from Theorem \ref{cohomnormestimate} using some basic facts about complex vector spaces.

Recall that a linear map $L \colon V \to W$ between real vector spaces extends to a complex linear map $L \colon V \otimes \C \to W \otimes \C$ by the formula $L(v \otimes z) = L(v) \otimes z$ for $v \in V$ and $z \in \C$. We consider $V$ and $W$ as real subspaces of $V\otimes \C$ and $W\otimes \C$ for which we have $V + iV = V\otimes \C$ and $W+iW = W\otimes \C$. Under this identification $L \colon V \otimes \C \to W \otimes \C$ is given by the formula $L(v_1 + iv_2) = L(v_1) + iL(v_2)$ for $v_1, v_2 \in V$. Further, the complex eigenvalues of $L\colon V\to W$ correspond to eigenvalues of $L \colon V\otimes \C \to W\otimes \C$.

A norm $\norm{\cdot}$ in $V$ extends to a norm in the complex vector space $V \otimes \C$ by setting
\begin{equation}\label{eq:complex_norm_extension}
	\norm{v + iv'} = \sup_{\theta \in [0, 2\pi]} \norm{\cos(\theta)v + \sin(\theta)v'}
\end{equation}
for $v, v' \in V$; for details, see e.g.\ \cite{MunosSarantopuolosTonge1999paper}. Note in particular that the extended norm satisfies $\norm{zw} = \abs{z}\norm{w}$ for all $z \in \C, w \in V \otimes \C$.

The extension of the norm $\norm{\cdot}_{n/k}$ in $\cehom{k}(N)$ to $\cehom{k}(N) \otimes \C$ now yields a complex version of Theorem \ref{cohomnormestimate}.

\begin{lemma}\label{complexcohomnormestimate}
	Let $f \colon M \rightarrow N$ be a non-constant $K$-quasiregular map between closed, connected, oriented Riemannian  $n$-manifolds, and let $0 < k < n$. Then there is a constant $C = C(n, k, K) \geq 1$ for which
	\begin{equation*}
		C^{-1}\left(\deg f\right)^{\frac{k}{n}} \norm{c}_\frac{n}{k}
		\leq \norm{f^*c}_\frac{n}{k}
		\leq C\left(\deg f\right)^{\frac{k}{n}} \norm{c}_\frac{n}{k}.
	\end{equation*}
	for all $c \in \cehom{k}(N) \otimes \C$, where the complexification of the norm $\norm{\cdot}_{n/k}$ is as in \eqref{eq:complex_norm_extension}.
\end{lemma}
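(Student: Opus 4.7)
The plan is to reduce Lemma \ref{complexcohomnormestimate} directly to Theorem \ref{cohomnormestimate} by exploiting the definition \eqref{eq:complex_norm_extension} of the extended norm together with the $\R$-linearity of the pull-back $f^*$.

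First I would write an arbitrary $c \in \cehom{k}(N) \otimes \C$ as $c = c_1 + i c_2$ with $c_1, c_2 \in \cehom{k}(N)$. Since $f^* \colon \cehom{k}(N) \to \cehom{k}(M)$ is real linear and its complexification acts coordinatewise, we have
\[
	f^* c = f^* c_1 + i f^* c_2,
\]
and for every $\theta \in [0, 2\pi]$,
\[
	\cos(\theta) f^* c_1 + \sin(\theta) f^* c_2 = f^*\!\left(\cos(\theta) c_1 + \sin(\theta) c_2\right).
\]
Applying Theorem \ref{cohomnormestimate} to the real class $\cos(\theta) c_1 + \sin(\theta) c_2 \in \cehom{k}(N)$, we obtain the two-sided estimate
\[
	C^{-1} (\deg f)^{k/n} \norm{\cos(\theta) c_1 + \sin(\theta) c_2}_{n/k}
	\leq \norm{f^*(\cos(\theta) c_1 + \sin(\theta) c_2)}_{n/k}
\]
and the corresponding upper bound, with the same constant $C=C(n,k,K)$ independent of $\theta$.

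The key step is then to take the supremum over $\theta \in [0,2\pi]$ on both sides. By the definition of the extended norm \eqref{eq:complex_norm_extension} in $\cehom{k}(N) \otimes \C$ and $\cehom{k}(M) \otimes \C$, the supremum of the left-hand side equals $\norm{f^* c}_{n/k}$ on the one inequality and $(\deg f)^{k/n} C^{-1} \norm{c}_{n/k}$ on the other, giving the desired bound
\[
	C^{-1} (\deg f)^{k/n} \norm{c}_{n/k} \leq \norm{f^* c}_{n/k} \leq C (\deg f)^{k/n} \norm{c}_{n/k}.
\]
There is no real obstacle here beyond checking that the uniform constant $C(n,k,K)$ from Theorem \ref{cohomnormestimate} is indeed independent of the real class to which it is applied, which is immediate from the statement of that theorem; the only thing one must be careful about is that the supremum commutes with a constant multiplicative factor and that $f^*$ intertwines the real-parameter rotation on $\cehom{k}(N)$ and $\cehom{k}(M)$. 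Both points are immediate from linearity, so the lemma follows in a few lines.
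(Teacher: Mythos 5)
Your proposal is correct and follows essentially the same route as the paper: decompose $c = c_1 + ic_2$, use the identity $\cos(\theta)f^*c_1 + \sin(\theta)f^*c_2 = f^*(\cos(\theta)c_1 + \sin(\theta)c_2)$, apply Theorem \ref{cohomnormestimate} for each fixed $\theta$ with the uniform constant $C(n,k,K)$, and take suprema over $\theta$ to recover both bounds for the complexified norm. No gaps.
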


\begin{proof}
	Let $c = a+bi \in \cehom{k}(N) \otimes \C$, and let $C \geq 1$ be the constant in Theorem \ref{cohomnormestimate}. Then
	\begin{align*}
		\norm{f^*c}_\frac{n}{k}
		&= \sup_{\theta \in [0, 2\pi]} \norm{\cos(\theta)f^*a + \sin(\theta)f^*b}
		= \sup_{\theta \in [0, 2\pi]} \norm{f^*(\cos(\theta)a + \sin(\theta)b)}\\
		&\leq C\left(\deg f\right)^{\frac{k}{n}} \sup_{\theta \in [0, 2\pi]} 
			\norm{\cos(\theta)a + \sin(\theta)b}
		= C\left(\deg f\right)^{\frac{k}{n}} \norm{c}_\frac{n}{k},
	\end{align*}
	which yields the upper bound. The lower bound is obtained in the same manner.
\end{proof}

We now prove Theorem \ref{alleigenvaluessamediag} in two parts. We show first that for a $k \in \{1, \ldots, n-1\}$ and a uniformly quasiregular map $f \colon M \to M$ on a closed, connected, oriented Riemannian manifold $M$, each complex eigenvalue $\lambda$ of $f^* \colon \cehom{k}(M) \to \cehom{k}(M)$ has absolute value equal to $(\deg f)^\frac{k}{n}$.
	
\begin{thm}\label{eigenvaluessame}
	Let $f \colon M \to M$ be a non-constant uniformly $K$-quasiregular map on a closed, connected, oriented Riemannian $n$-manifold $M$, let $k \in \{1, \ldots, n-1\}$, and let $\lambda$ be a complex eigenvalue of $f^* \colon \cehom{k}(M) \to \cehom{k}(M)$. Then
	\[
		\abs{\lambda} = \left(\deg f\right)^{\frac{k}{n}}.
	\]
\end{thm}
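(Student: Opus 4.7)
The plan is to combine the norm estimate of Lemma \ref{complexcohomnormestimate} with iteration. The crucial feature of the uniform quasiregularity hypothesis is that every iterate $f^m$ is itself $K$-quasiregular with the same $K$, so the constant $C = C(n,k,K)$ from Lemma \ref{complexcohomnormestimate} applies \emph{uniformly} to $f^m$ for every $m\ge 1$. Moreover, since pull-back is contravariantly functorial on $\cehom{*}$, we have $(f^m)^* = (f^*)^m$, and the degree is multiplicative: $\deg(f^m) = (\deg f)^m$.

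First, I would complexify the pull-back as in the paragraph preceding Lemma \ref{complexcohomnormestimate}, so that $f^* \colon \cehom{k}(M) \otimes \C \to \cehom{k}(M) \otimes \C$ has $\lambda$ as an honest eigenvalue with some nonzero eigenvector $c \in \cehom{k}(M) \otimes \C$. Then $(f^*)^m c = \lambda^m c$ in $\cehom{k}(M)\otimes\C$. Applying the complexified norm estimate with the map $f^m$ yields
\[
    C^{-1} (\deg f)^{mk/n} \, \norm{c}_{n/k}
    \;\le\; \norm{(f^*)^m c}_{n/k}
    \;=\; \abs{\lambda}^m \, \norm{c}_{n/k}
    \;\le\; C (\deg f)^{mk/n} \, \norm{c}_{n/k},
\]
where the absolute value comes out through the homogeneity $\norm{zw} = \abs{z}\norm{w}$ of the complexified norm.

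Next, since $c \ne 0$ the factor $\norm{c}_{n/k}$ can be cancelled, and taking $m$-th roots gives
\[
    C^{-1/m} (\deg f)^{k/n} \;\le\; \abs{\lambda} \;\le\; C^{1/m} (\deg f)^{k/n}.
\]
Letting $m \to \infty$, the factors $C^{\pm 1/m}$ tend to $1$ because $C$ is independent of $m$, and we conclude $\abs{\lambda} = (\deg f)^{k/n}$.

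There is no real obstacle beyond unpacking the definitions; the entire content is the uniform independence of $C$ on the iterate, which is precisely what uniform quasiregularity buys us. The one subtlety worth double-checking is that the complexified norm extension in \eqref{eq:complex_norm_extension} genuinely satisfies $\norm{\lambda^m c}_{n/k} = \abs{\lambda}^m \norm{c}_{n/k}$ (this is the homogeneity property noted just before Lemma \ref{complexcohomnormestimate}), and that Lemma \ref{complexcohomnormestimate} applies to the complexification — but both points are recorded in the paper.
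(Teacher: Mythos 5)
Your proposal is correct and is essentially identical to the paper's own proof: both apply the complexified norm estimate of Lemma \ref{complexcohomnormestimate} to the iterates $f^m$, use the $m$-independence of $C$ coming from uniform quasiregularity, and let $m\to\infty$ after taking $m$-th roots. The auxiliary points you flag — functoriality $(f^m)^*=(f^*)^m$, multiplicativity of the degree, and homogeneity of the complexified norm — are exactly the ingredients the paper relies on implicitly.
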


\begin{proof}
	Let $c \in \cehom{k}(M)\otimes\C \setminus \{0\}$ be a complex eigenvector corresponding to the eigenvalue $\lambda$. Since every iterate $f^m$ of $f$ is $K$-quasiregular, we obtain by Lemma \ref{complexcohomnormestimate} the estimate
	\[
		C^{-1}\left(\deg f\right)^{\frac{mk}{n}} \norm{c}_\frac{n}{k}
		\leq \abs{\lambda}^m\norm{c}_\frac{n}{k}
		\leq C\left(\deg f\right)^{\frac{mk}{n}} \norm{c}_\frac{n}{k}
	\]
	for every $m \in \Z_+$, where $C= C(n, k, K)$ is independent of $m$. By rearranging the inequalities we obtain
	\[
		C^{-\frac{1}{m}} \leq \frac{\abs{\lambda}}{\left(\deg f\right)^{\frac{k}{n}}}  \leq C^{\frac{1}{m}},
	\]
	and, by letting $m \to \infty$, the claim follows.
\end{proof}
	
We prove the complex diagonalizability of $f^* \colon \cehom{k}(M) \to \cehom{k}(M)$ using the Jordan normal form of the matrix of $f^*$. Recall that, if $V$ is a finite-dimensional vector space and $L \colon V \to V$ is a linear map, then there exists a basis of $V\otimes\C$ under which the matrix representation of $L$ is zero outside of square blocks called \emph{Jordan blocks} on the diagonal, of the form
\[
	\begin{bmatrix}
		\lambda	&   1	&   0	&\ldots	&   0	\\
			0	&\lambda&   1	&\ldots	&	0	\\
			0	&	0	&\lambda&\ddots	&\vdots \\
		\vdots	&\vdots &\vdots &\ddots &	1	\\
			0	&	0	&	0	&	0	&\lambda	
	\end{bmatrix}
\]
where $\lambda$ is a complex eigenvalue of $L$; see e.g.\ \cite[Ch. 6]{Shilov1971book}. The Jordan normal form is unique up to the order of Jordan blocks. Clearly $L$ is diagonalizable if and only if it has a Jordan normal form consisting only of $1 \times 1$ blocks.
	
\begin{thm}
	Let $f \colon M \to M$ be a uniformly $K$-quasiregular map on a closed, connected, oriented Riemannian $n$-manifold $M$. Then $f^* \colon \cehom{k}(M) \to \cehom{k}(M)$ is complex diagonalizable.
\end{thm}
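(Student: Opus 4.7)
The plan is to exploit the Jordan normal form for the complexified map $f^* \colon \cehom{k}(M) \otimes \C \to \cehom{k}(M) \otimes \C$ and combine it with the uniform estimate from Lemma \ref{complexcohomnormestimate}. The cases $k \in \{0,n\}$ are trivial, since $\cehom{k}(M)$ is one-dimensional. So I would reduce to $0 < k < n$.

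My strategy is a proof by contradiction: assume $f^*$ admits a Jordan block of size at least two with some eigenvalue $\lambda$. By Theorem \ref{eigenvaluessame}, $|\lambda| = (\deg f)^{k/n} > 0$. I would pick classes $c_1, c_2 \in \cehom{k}(M) \otimes \C$ corresponding to the top two entries of such a block, so that $f^* c_1 = \lambda c_1$, $f^* c_2 = \lambda c_2 + c_1$, and $c_1 \neq 0$. A direct induction gives
\[
	(f^*)^m c_2 = \lambda^m c_2 + m \lambda^{m-1} c_1
\]
for every $m \in \Z_+$. Applying the reverse triangle inequality to the complexified norm \eqref{eq:complex_norm_extension}, I obtain the lower bound
\[
	\norm{(f^*)^m c_2}_\frac{n}{k} \geq m |\lambda|^{m-1} \norm{c_1}_\frac{n}{k} - |\lambda|^m \norm{c_2}_\frac{n}{k}.
\]

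On the other hand, since $f$ is uniformly $K$-quasiregular, each iterate $f^m$ is $K$-quasiregular with $\deg f^m = (\deg f)^m$. Applying Lemma \ref{complexcohomnormestimate} to the map $f^m$ yields a constant $C = C(n, k, K)$, independent of $m$, for which
\[
	\norm{(f^*)^m c_2}_\frac{n}{k} \leq C (\deg f)^{mk/n} \norm{c_2}_\frac{n}{k} = C |\lambda|^m \norm{c_2}_\frac{n}{k}.
\]
Combining both estimates and dividing by $|\lambda|^m > 0$, I would arrive at an inequality of the form $m |\lambda|^{-1} \norm{c_1}_\frac{n}{k} \leq (C+1) \norm{c_2}_\frac{n}{k}$, which fails once $m$ is sufficiently large. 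This contradicts the existence of the Jordan block, forcing all blocks to have size one and hence $f^*$ to be diagonalizable.

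The main obstacle I expect is confirming that the constant $C$ from Lemma \ref{complexcohomnormestimate} genuinely remains independent of the iterate; this is precisely the content of the uniform quasiregularity assumption and the reason it is indispensable here. Without uniformity, the polynomial-in-$m$ growth coming from a non-trivial Jordan block would be absorbed into a varying distortion constant and the contradiction would collapse. Once that uniformity is in hand, the remainder is a routine linear-algebra computation on a single Jordan block.
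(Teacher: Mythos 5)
Your proposal is correct and follows essentially the same route as the paper: assume a Jordan block of size at least two, compute $(f^m)^* e_2 = \lambda^m(e_2 + m\lambda^{-1}e_1)$, bound it above via the complexified norm estimate with a constant independent of $m$ (using uniform quasiregularity), use $\abs{\lambda} = (\deg f)^{k/n}$ from the eigenvalue theorem, and derive a contradiction from the linear growth in $m$. The only cosmetic difference is that you invoke the reverse triangle inequality explicitly where the paper leaves it implicit.
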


\begin{proof}
	Suppose that the Jordan normal form of $f^* \colon \cehom{k}(M) \to \cehom{k}(M)$ has a non-diagonal Jordan block associated to a complex eigenvalue $\lambda$. Then there exist $e_1, e_2 \in (\cehom{k}(M)\otimes\C) \setminus \{0\}$ satisfying
	\begin{align}\label{eq:jordaniterates}\begin{split}
		f^*e_1 &= \lambda e_1,\\
		f^*e_2 &= \lambda e_2 + e_1.
	\end{split}\end{align}
	Then, for each $m\in \Z_+$,
	\[
		(f^m)^*e_2 = \lambda^m\left(m\lambda^{-1}e_1 + e_2\right).
	\]
	Now, Lemma \ref{complexcohomnormestimate} yields
	\[
		\abs{\lambda}^m \norm{m\lambda^{-1}e_1 + e_2}_\frac{n}{k}
		= \norm{(f^m)^*e_2}_\frac{n}{k} 
		\leq C\left(\deg f\right)^{\frac{mk}{n}} \norm{e_2}_\frac{n}{k}
	\]
	for every $m\in \Z_+$, where $C = C(n, k, K)$ is independent of $m$. Since $\abs{\lambda} = (\deg f)^{k/n}$, we obtain
	\[
		m\norm{\lambda^{-1}e_1}_\frac{n}{k} \leq (C + 1)\norm{e_2}_\frac{n}{k},
	\]
	for every $m\in \Z_+$, which is a contradiction. Hence the Jordan normal form of $f^* \colon \cehom{k}(M) \to \cehom{k}(M)$ has no non-diagonal blocks.
\end{proof}
	
This completes the proof of Theorem \ref{alleigenvaluessamediag}.


\section{Degree restrictions}\label{sect:degree_limitation}	
In this section, we briefly elaborate on Corollary \ref{cor:degree_spectrum}. In its general form, the result is as follows.
 	
\begin{thm}\label{full_degree_corollary}
	Let $f\colon M\to M$ be a uniformly quasiregular self-map on a closed, connected, oriented Riemannian $n$-manifold $M$. Then for every $k \in \N$,
	\[
		(\deg f)^{\frac{k}{n} \dim H^k(M; \R)} \in \Z.
	\]
\end{thm}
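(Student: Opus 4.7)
The plan is to reduce everything to computing the determinant of $f^* \colon H^k(M;\R) \to H^k(M;\R)$ in two different ways: once via eigenvalues using Theorem \ref{alleigenvaluessamediag}, and once via an integer lattice using the fact that $f$ is continuous.

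First I would dispose of the trivial cases. For $k = 0$, both sides are $1$. For $k > n$, the cohomology vanishes and again $(\deg f)^0 = 1$. For $k = n$, since $M$ is closed, connected, and oriented, $\dim H^n(M;\R) = 1$, so the exponent is $(n/n)\cdot 1 = 1$ and the claim reduces to $\deg f \in \Z$, which is automatic. So the only nontrivial range is $1 \le k \le n-1$, where Theorem \ref{alleigenvaluessamediag} applies.

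Fix such a $k$ and set $d = \dim H^k(M;\R)$. By Theorem \ref{alleigenvaluessamediag}, the map $f^* \colon H^k(M;\R) \to H^k(M;\R)$ is complex diagonalizable and all of its complex eigenvalues $\lambda_1, \ldots, \lambda_d$ satisfy $\abs{\lambda_j} = (\deg f)^{k/n}$. Therefore
\[
	\abs{\det f^*} \;=\; \prod_{j=1}^{d} \abs{\lambda_j} \;=\; (\deg f)^{\frac{k}{n} d}.
\]
The second ingredient is the integrality of $\det f^*$. The natural map $H^k(M;\Z)/\mathrm{Tor}(H^k(M;\Z)) \hookrightarrow H^k(M;\R)$ realizes the left-hand side as a full-rank free abelian subgroup (a lattice) in $H^k(M;\R)$, and by functoriality of singular cohomology $f^* \colon H^k(M;\Z) \to H^k(M;\Z)$ preserves both the integer lattice and its torsion subgroup, hence descends to a $\Z$-linear endomorphism of the quotient. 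Choosing a $\Z$-basis of this quotient as a basis of $H^k(M;\R)$, the matrix of $f^*$ has integer entries, so $\det f^* \in \Z$.

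Combining the two computations, $(\deg f)^{(k/n)d} = \abs{\det f^*} \in \Z$, which is exactly the statement of the theorem. There is no serious obstacle: the only mild subtlety is making sure that the real cohomology basis can be chosen from the integer lattice, but this is a standard consequence of the universal coefficient theorem together with the fact that $H^k(M;\Z)$ is finitely generated because $M$ is a closed manifold.
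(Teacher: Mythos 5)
Your proposal is correct and follows essentially the same route as the paper: compute $\abs{\det f^*}$ as $(\deg f)^{kd/n}$ via the eigenvalue moduli from Theorem \ref{alleigenvaluessamediag}, and show $\det f^* \in \Z$ by descending $f^*$ to the torsion-free integer lattice $H^k(M;\Z)/\tor(H^k(M;\Z))$ inside $H^k(M;\R)$ (the content of the paper's Lemma \ref{topological_basis_lemma}). The only difference is presentational: you spell out the trivial cases $k \in \{0, n\}$ and $k > n$, which the paper dismisses in one line.
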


We denote $d = \dim H^k(M; \R)$. The claim of Theorem \ref{full_degree_corollary} is nontrivial only for $0 < k < n$. Furthermore, as noted in the introduction, Theorem \ref{alleigenvaluessamediag} shows that $f^* \colon H^k(M; \R) \to H^k(M; \R)$ has a determinant equal to $\pm(\deg f)^{kd/n}$. Therefore, Theorem \ref{full_degree_corollary} follows immediately from the following lemma.

\begin{lemma}\label{topological_basis_lemma}
	Let $f \colon M \to M$ be a continuous self-map on a closed manifold, and let $k > 0$. Then there is a basis of $H^k(M; \R)$ under which the matrix of $f^* \colon H^k(M; \R) \to H^k(M; \R)$ has integer coefficients. In particular, the determinant of $f^* \colon H^k(M; \R) \to H^k(M; \R)$ is an integer.
\end{lemma}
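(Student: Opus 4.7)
The plan is to produce the desired basis of $H^k(M;\R)$ by pushing forward a $\Z$-basis of the torsion-free quotient of integral cohomology along the canonical change-of-coefficients map.

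First I would recall that, since $M$ is a closed manifold, $H^k(M;\Z)$ is finitely generated, so its torsion subgroup $T = \tor(H^k(M;\Z))$ is finite and the quotient $\Lambda = H^k(M;\Z)/T$ is a free abelian group of rank $d = \dim H^k(M;\R)$. The change-of-coefficients homomorphism $\iota \colon H^k(M;\Z) \to H^k(M;\R)$ induced by the inclusion $\Z \hookrightarrow \R$ kills $T$ (torsion must vanish in a $\R$-vector space) and therefore factors through a homomorphism $\bar \iota \colon \Lambda \to H^k(M;\R)$. By the universal coefficient theorem, $\bar \iota$ is injective and its image spans $H^k(M;\R)$ over $\R$; equivalently, $\bar\iota(\Lambda)$ is a lattice of full rank $d$ inside $H^k(M;\R)$.

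Next I would observe that the induced homomorphism $f^* \colon H^k(M;\Z) \to H^k(M;\Z)$ automatically sends torsion to torsion, hence descends to a group homomorphism $f^*_\Lambda \colon \Lambda \to \Lambda$. The naturality of the universal coefficient theorem with respect to continuous maps implies that the diagram relating $f^*_\Lambda$ with the real pull-back $f^* \colon H^k(M;\R) \to H^k(M;\R)$ via $\bar\iota$ commutes, i.e.\ $f^* \circ \bar\iota = \bar\iota \circ f^*_\Lambda$.

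To finish, I would pick any $\Z$-basis $(e_1,\ldots,e_d)$ of the free abelian group $\Lambda$; then $(\bar\iota(e_1),\ldots,\bar\iota(e_d))$ is an $\R$-basis of $H^k(M;\R)$ because $\bar\iota(\Lambda)$ is a full-rank lattice. The matrix of $f^*_\Lambda$ in the basis $(e_1,\ldots,e_d)$ has entries in $\Z$ by construction, and the commutative square transports this matrix verbatim to the matrix of $f^*$ in the basis $(\bar\iota(e_1),\ldots,\bar\iota(e_d))$. The determinant statement then follows immediately. The only delicate point, which I would handle by citing the universal coefficient theorem, is confirming that $\bar\iota(\Lambda)$ really is a full-rank lattice in $H^k(M;\R)$; everything else is essentially bookkeeping.
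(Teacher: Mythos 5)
Your proposal is correct and follows essentially the same route as the paper: both arguments pass to the torsion-free quotient of $H^k(M;\Z)$, use the universal coefficient theorem (for a space of finite type) to identify its image as a full-rank $f^*$-invariant lattice in $H^k(M;\R)$, and read off integrality of the matrix from a $\Z$-basis of that lattice. The paper phrases the lattice step via the isomorphism $H^k(M;\Z)\otimes\R \to H^k(M;\R)$ and right-exactness of the tensor product, which is just a bookkeeping variant of your change-of-coefficients map $\bar\iota$.
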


The proof of Lemma \ref{topological_basis_lemma} is standard and straightforward for a reader familiar with algebraic topology. However, instead of searching for a reference, we give a simple proof.

\begin{proof}
	To avoid ambiguity, we denote by $f^*$ the pull-back $H^k(M; \R) \to H^k(M; \R)$ and by $f^*_\Z$ the pull-back $H^k(M; \Z) \to H^k(M; \Z)$. By a universal coefficient theorem for spaces of finite type, see \cite[Theorem 12.15]{Rotman1988book}, we obtain an isomorphism $\alpha \colon H^k(M;\Z)\otimes \R \to H^k(M;\R)$ which satisfies
	\begin{equation}\label{top_lemma_eq_1}
		\alpha \circ \left( f^*_{\Z}\otimes \id_\R \right) = f^* \circ \alpha.
	\end{equation}
	
	Let $T^k(M;\Z)$ be the torsion subgroup of $H^k(M;\Z)$ and let $H^k_\free(M;\Z)$ be the quotient group $H^k(M;\Z)/T^k(M;\Z)$ with projection $p \colon H^k(M;\Z)\to H^k_\free(M;\Z)$. We obtain an induced homomorphism $[f^*_\Z]\colon H^k_\free(M;\Z)\to H^k_\free(M;\Z)$ satisfying $p \circ f^*_\Z = [f^*_\Z]\circ p$. By the right exactness of the tensor product, the sequence
	\[
		\xymatrix{
			T^k(M, \Z) \otimes \R \ar[r]^-{i\otimes\id_\R} 
			& H^k(M, \Z) \otimes \R \ar[r]^-{p\otimes\id_\R} 
			& H^k_\free(M;\Z) \otimes \R \ar[r] 
			& 0
		}
	\]
	is exact, where $i$ is the inclusion homomorphism $T^k(M;\Z) \hookrightarrow H^k(M;\Z)$. Since $T^k(M, \Z) \otimes \R = 0$, we furthermore have that $p\otimes\id_\R$ is an isomorphism. We denote by $\beta$ the inverse of $p\otimes\id_\R$, and note that
	\begin{equation}\label{top_lemma_eq_2}
		\beta \circ \left( [f^*_\Z] \otimes \id_\R\right)  = \left( f^*_\Z \otimes \id_\R\right) \circ \beta.
	\end{equation}
 		
	Since $H^k_\free(M;\Z)$ is a finitely-generated free Abelian group, there exists $m \in \N$ and a free generating set $\{e_1, \ldots, e_m\}$ of $H^k_\free(M;\Z)$. Then $H^k_\free(M;\Z) \otimes \R$ is linearly isomorphic to $\R^m$, and $\{e_1 \otimes 1, \ldots, e_m \otimes 1\}$ is a basis of $H^k_\free(M;\Z) \otimes \R$. Let $\gamma \colon H^k_\free(M;\Z)\to H^k(M;\R)$ be the homomorphism $c \mapsto (\alpha \circ \beta) (c \otimes 1)$. Then, since $\alpha$ and $\beta$ are isomorphisms, $\{\gamma(e_1), \ldots, \gamma(e_m)\}$ is a basis of $H^k(M;\R)$. By \eqref{top_lemma_eq_1} and \eqref{top_lemma_eq_2}, we have $\gamma \circ [f^*_\Z] = f^* \circ \gamma$, and therefore the matrix of $f^*$ with respect to $\{\gamma(e_1), \ldots, \gamma(e_m)\}$ has integer coefficients. The claim that $\det f^*$ is an integer follows immediately.
\end{proof}


\end{document}